\title{Fermat's Last Theorem and modular curves over real quadratic fields}
\author{Philippe Michaud-Jacobs}
\newcommand\notsotiny{\@setfontsize\notsotiny\@vipt\@viipt}
\newtheorem{theorem}{Theorem}[section]
\newtheorem*{theorem*}{Theorem}
\newtheorem {lemma}[theorem]{Lemma}
\newtheorem {proposition}[theorem]{Proposition}
\theoremstyle{definition}
\newtheorem {example}{Example}[section]
\theoremstyle{remark}
\newtheorem{remark}[theorem]{Remark}
\newtheorem*{remark*}{Remark}
\apptocmd{\sloppy}{\hbadness 10000\relax}{}{}
\providecommand{\Q}{\mathbb{Q}}
\providecommand{\Z}{\mathbb{Z}}
\providecommand{\C}{\mathbb{C}}
\renewcommand{\arraystretch}{2.0}
\newcommand{\Addresses}{{
  \bigskip
  \footnotesize

 \textsc{Mathematics Institute, University of Warwick, CV4 7AL, United Kingdom}\par\nopagebreak
  \textit{E-mail address}: \texttt{p.rodgers@warwick.ac.uk}
}}
\let\svthefootnote\thefootnote
\newcommand\freefootnote[1]{%
  \let\thefootnote\relax%
  \footnotetext{#1}%
  \let\thefootnote\svthefootnote%
}
\date{\vspace{-3ex}}
\begin{document}

\maketitle
 
\begin{abstract}
In this paper we study the Fermat equation $x^n+y^n=z^n$ over quadratic fields $\Q(\sqrt{d})$ for squarefree $d$ with $26 \leq d \leq 97$. By studying quadratic points on the modular curves $X_0(N)$, $d$-regular primes, and working with Hecke operators on spaces of Hilbert newforms, we extend work of Freitas and Siksek to show that for most squarefree $d$ in this range there are no non-trivial solutions to this equation for $n \geq 4$.
\end{abstract}

\tableofcontents

\section{Introduction}

There has been much recent interest in the study of the Fermat equation \[ x^n+y^n=z^n \] over number fields. Following in the footsteps of Wiles \citep{wiles}, we would ideally like to show that this equation has no non-trivial solutions for $n \geq 4$ and $x,y,z \in K$, a number field. By a \emph{non-trivial} solution, we mean $xyz \neq 0$. The study of the Fermat equation over number fields dates back to the work of Maillet in the late 19th century \citep[p.~578]{hist}. 

\freefootnote{\emph{Keywords}: Fermat's Last Theorem, Fermat equation, Frey curve, Galois representations, quadratic points, modular curves, irreducibility,  Hilbert modular forms.}
\freefootnote{\emph{MSC2010}: 11D41, 11F80, 11G18, 11G05, 14G05.}
\freefootnote{The author is supported by an EPSRC studentship and has previously used the name Philippe Michaud-Rodgers.}

Asymptotic versions of Fermat's Last Theorem over number fields (proving there are no non-trivial solutions if all the prime factors of $n$ are greater than some bound dependent on $K$) have been proven over various number fields (see \citep{asymptotic,asymp2} for example). In this paper, we are concerned with trying to prove the non-existence of non-trivial solutions for \emph{all} $n \geq 4$ over a real quadratic field $K$. Jarvis and Meekin proved this statement over the field $\Q(\sqrt{2})$ in \citep{jarv}. This work was then extended by Freitas and Siksek  \citep{realquad} to the quadratic fields $\Q(\sqrt{d})$, with $d$ squarefree, $d \neq 5,17$ in the range $3 \leq d \leq 23$, where it was shown that there are no non-trivial solutions for $n \geq 4$. Kraus, in \citep{krausreal}, proved that for $K$ a cubic field of discriminant $148, 404$, or $564$ there are no non-trivial solutions for $n \geq 4$. The aim of this paper is to extend the result of Freitas and Siksek to squarefree $d$ in the range $26 \leq d \leq 97$, as well as introduce techniques that can be used to study other Diophantine equations, both over the rationals and number fields of low degree. For most values of $d$ in this range, issues arise surrounding irreducibility of Galois representations and the computation of Hilbert newforms. In most cases, we overcome these issues to obtain the following result.
\begingroup
\renewcommand\thetheorem{1}
\begin{theorem}\label{Mainthm} The equation \[x^n+y^n=z^n, \quad x,y,z \in K, \] has no non-trivial solutions for $n \geq 4$ and $K = \Q(\sqrt{d})$, when $d \in \mathcal{D}$, where \linebreak $\mathcal{D} = \{ 
26, 29, 30, 31, 35, 37, 38, 42, 43, 46, 47, 51, 53, 58, 59, 61, 62, 65, 66, 67, \\ 69,  71,  73,  74,  77, 79, 82, 83, 85, 87, 91, 93, 94
 \}.$
\end{theorem}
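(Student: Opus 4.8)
The plan is to follow the modular approach of Freitas and Siksek (building on the modularity of elliptic curves over real quadratic fields established by Freitas, Le Hung and Siksek), incorporating the new ingredients advertised in the abstract. First I would reduce to prime exponents: any non-trivial solution with $n \geq 4$ descends to a non-trivial solution with exponent $3$, $4$, or a prime $p \geq 5$, so it suffices to treat these cases. The exponents $3$ and $4$ I would dispatch by classical descent on the Fermat cubic $u^3 + v^3 = w^3$ and quartic $u^4 + v^4 = w^4$, which for each $K = \Q(\sqrt d)$ with $d \in \mathcal{D}$ are found to carry only trivial $K$-rational points (a finite rank and torsion computation). For $n = p \geq 5$, to a putative non-trivial solution $(a,b,c)$ over $K$ I attach --- after scaling and, where necessary, working with each of the three permuted Frey curves --- the Frey elliptic curve $E = E_{a,b,c}\colon Y^2 = X(X - a^p)(X + b^p)$, whose minimal discriminant is supported, away from $2$, only at primes dividing $abc$, and I invoke modularity to associate to $E$ a Hilbert newform over $K$ of parallel weight $2$.

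Second, I would show that the representation $\bar\rho_{E,p}\colon \Gal(\overline{K}/K) \to \mathrm{GL}_2(\mathbb{F}_p)$ is irreducible for the relevant $p$. If it were reducible, then $E$, or a curve $p$-isogenous to it, would give a quadratic point on one of the modular curves $X_0(p)$ or $X_0(2p)$ (or a closely related curve), supporting a $K$-rational cyclic $p$-isogeny whose isogeny character is severely constrained at $2$ and at the primes dividing $abc$. Combining the known determination of quadratic points on these $X_0(N)$ with the notion of a \emph{$d$-regular} prime --- which controls the interaction between such isogeny characters and the units and class group of $\Q(\sqrt d)$ --- I would rule this out for all $(d,p)$ outside a short explicit list, the remaining pairs being handled by dedicated computation.

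Third come level lowering and elimination. By the level-lowering theorems for Hilbert modular forms of Fujiwara, Jarvis and Rajaei, $\bar\rho_{E,p}$ arises from a Hilbert newform $\mathfrak{g}$ over $K$ of parallel weight $2$ and level $\mathfrak{m}$ dividing a fixed ideal supported only on the primes of $K$ above $2$, the precise shape of $\mathfrak{m}$ depending on how $2$ splits in $K$. I would then compute this finite-dimensional space of newforms: if it is zero, the contradiction is immediate. Otherwise, for each newform $\mathfrak{g}$ I would use that $\bar\rho_{E,p}$ is unramified at every prime $\mathfrak{q} \mid abc$ with trace of Frobenius $\mathrm{tr}\,\bar\rho_{E,p}(\mathrm{Frob}_{\mathfrak{q}}) \equiv \pm(\mathrm{Norm}\,\mathfrak{q} + 1) \pmod{\mathfrak{p}}$, and compare this with the Hecke eigenvalues $a_{\mathfrak{q}}(\mathfrak{g})$ at several well-chosen auxiliary primes $\mathfrak{q}$; this yields for each $\mathfrak{g}$ a congruence which either bounds $p$ to a small explicit set or is already contradictory. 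Newforms with rational eigenvalues --- corresponding to elliptic curves over $K$ or to abelian varieties of $\mathrm{GL}_2$-type --- I would treat via their reduction behaviour and torsion at $2$, and the residual small primes $p \in \{5,7,11,13,17\}$ would be finished by supplementary local analysis at $2$ together with the same Hecke operator computations.

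The main obstacle --- and the reason $\mathcal{D}$ omits many squarefree $d$ in the stated range --- is twofold. On the analytic side, the dimension of the space of Hilbert newforms of the relevant level grows quickly with $d$, so both computing this space and finding auxiliary primes $\mathfrak{q}$ that separate the Frey congruence class from every eigenvalue system becomes expensive, and in some cases inconclusive. On the arithmetic side, when $p$ is small or $d$ fails to be $d$-regular, irreducibility of $\bar\rho_{E,p}$ is not forced by the modular-curve argument, and there may be no elliptic curve of the correct conductor carrying the required $2$-torsion to complete the elimination; the values of $d$ for which one of these steps genuinely breaks down are precisely those excluded from $\mathcal{D}$.
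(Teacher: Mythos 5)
Your overall strategy (Frey curve, modularity over real quadratic fields, irreducibility, level lowering to Hilbert newforms, elimination via Hecke eigenvalue congruences) is the same as the paper's, but three of your steps have genuine gaps. First, the reduction of $n\geq 4$ to exponents $3$, $4$, or a prime $p\geq 5$ is the wrong decomposition: the correct reduction is to exponents $4$, $6$, $9$, or a prime $p\geq 5$, and passing from $6$ or $9$ down to $3$ loses information that you cannot recover, because the Fermat cubic genuinely acquires non-trivial points over many real quadratic fields (e.g.\ $(18+17\sqrt{2})^3+(18-17\sqrt{2})^3=42^3$); your claimed "finite rank and torsion computation" showing only trivial points on the cubic over each $\Q(\sqrt{d})$, $d\in\mathcal{D}$, is not guaranteed to succeed and is in any case unnecessary, since the cases $n=4,6,9$ and $5\leq p\leq 13$ are already known over all real quadratic fields. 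Second, your irreducibility argument leans on "the known determination of quadratic points" on $X_0(p)$ and $X_0(2p)$, but such determinations exist only for curves of genus at most $5$ (plus a few isolated cases), whereas the primes that actually survive the class-field-theoretic bounds for $d\in\mathcal{D}$ include $p=101,103,109,127,151,787,1049$; for these one needs the machinery the paper develops — formal immersions and relative symmetric Chabauty on $X_0^{(2)}(p)$ combined with Kraus-type split primes of multiplicative reduction and Ozman's local criterion for the Atkin--Lehner twists $X_0^{d}(p)$, plus preimages under modular parametrisations and a Mordell--Weil sieve for a handful of stubborn pairs. Your appeal to $d$-regular primes at this stage is also misplaced: in the paper $d$-regularity is an alternative to the whole modular method for isolated exponents, not an irreducibility tool.

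Third, in the elimination step you acknowledge that the spaces of Hilbert newforms become large but offer no remedy; for several $d\in\mathcal{D}$ (e.g.\ $d=66$, where a relevant new subspace has dimension $8960$) the full newform decomposition cannot be computed with existing software, and the paper's essential workaround is to reconstruct only the needed eigenvalue data by iteratively splitting the new subspace under characteristic polynomials of a few Hecke operators $T_{\mathfrak{q}}$ and discarding each irreducible piece via a norm-of-resultant criterion. Relatedly, your description of the lowered level as supported only at primes above $2$ is incorrect whenever $\mathrm{Cl}(K)/\mathrm{Cl}(K)^2$ is non-trivial (which happens for $d=26,30,35,42,51,58,65,66,74,82,85,87,91$ among others): the scaling of $(a,b,c)$ by class group representatives forces an auxiliary odd prime $\mathfrak{m}$ into the conductor, and the resulting levels $\mathfrak{m}^2\mathfrak{p}^k$ are precisely where the large dimensions occur.
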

\endgroup

We note that the case $d=79$ is proven in the paper of Freitas and Siksek \citep[p.~14]{realquad}. We also obtain the following partial results.

\begingroup
\renewcommand\thetheorem{2}
\begin{theorem}\label{thm2} The equation \[x^p+y^p=z^p, \quad x,y,z \in K, \] has no non-trivial solutions for $p \geq 5$ prime and $K = \Q(\sqrt{d})$, when $ d =34,55,86,97$ unless $(d,p) = (34,23), (55,23), (86,31),$ or $(97,17)$.
\end{theorem}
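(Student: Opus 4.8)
The plan is to follow the modular method of Freitas and Siksek, adapted to the fields $K = \Q(\sqrt{d})$ for $d = 34, 55, 86, 97$, and to reduce to the case of prime exponent $p \geq 5$ via the standard observation that a solution for $n \geq 4$ yields either a solution for $n = 4$ (handled separately, since $X_0(N)$-type arguments or the classical descent rule out solutions over these fields) or a solution for some prime $p \geq 5$. So assume $p \geq 5$ and let $(a,b,c)$ be a non-trivial solution in $K$; after scaling by $\mathcal{O}_K$-ideals we may take $a,b,c$ to be a \emph{primitive} solution, and we attach to it the Frey elliptic curve $E = E_{a,b,c}$ over $K$ of the usual shape $Y^2 = X(X-a^p)(X+b^p)$. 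First I would establish modularity of $E$: by the recent modularity lifting results over real quadratic fields (Freitas--Le Hung--Siksek), $E$ is modular, arising from a Hilbert newform $\mathfrak{f}$ over $K$ of parallel weight $2$ and level dividing $2^k \mathfrak{m}$ for a small explicit $\mathfrak{m}$ depending on the ramification of $2$ and on the primes dividing the conductor that come from $abc$.

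Next I would control the level lowering. One needs the mod-$p$ representation $\overline{\rho}_{E,p}$ to be irreducible; this is exactly where the $d$-regular primes and the irreducibility arguments indicated in the abstract and (I assume) proved earlier in the paper come in — these results show that for the relevant $d$, $\overline{\rho}_{E,p}$ is irreducible for all $p \geq 5$ (possibly outside a small explicit set of $p$, which will feed into the exceptional pairs in the statement). Granting irreducibility, Ribet-type level lowering over totally real fields (in the form available for Hilbert modular forms) shows that $\overline{\rho}_{E,p} \cong \overline{\rho}_{\mathfrak{g},\mathfrak{p}}$ for some Hilbert newform $\mathfrak{g}$ over $K$ of parallel weight $2$ and level $\mathcal{N}$ equal to one of a short explicit list of levels supported only at the primes above $2$ (the precise list depends on whether $2$ is inert, split, or ramified in $K$, i.e. on $d \bmod 8$). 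The plan is then to enumerate the (finite-dimensional) spaces of Hilbert newforms of these levels for each of the four fields.

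Then comes the elimination step. For each newform $\mathfrak{g}$ in the list, I would derive a contradiction by the standard comparison of Hecke eigenvalues: if $\mathfrak{g}$ has rational eigenvalues and corresponds to an elliptic curve $E'/K$, one rules it out using information about the reduction of the Frey curve versus $E'$ at auxiliary primes $\mathfrak{q} \nmid 2p$ — typically by exhibiting a prime $\mathfrak{q}$ of good reduction where the congruence $a_{\mathfrak{q}}(E) \equiv a_{\mathfrak{q}}(\mathfrak{g}) \pmod{\mathfrak{p}}$ forces $p$ to divide a specific nonzero integer, hence bounds $p$; running over several $\mathfrak{q}$ either eliminates the form outright or leaves only finitely many $p$. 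Forms with a genuinely non-rational Hecke eigenvalue field $\Q_{\mathfrak{g}}$ are handled the same way, now getting that $\mathfrak{p}$ divides the norm of $a_{\mathfrak{q}}(E) - \theta$ for the conjugates $\theta$ of the eigenvalue, again bounding $p$. The surviving $(d,p)$ after this process — where the method genuinely fails to give a contradiction, e.g. because a newform matches the Frey curve's traces at all tested primes or because the earlier irreducibility result excludes that $p$ — are precisely $(34,23)$, $(55,23)$, $(86,31)$, $(97,17)$, and these are recorded as exceptions.

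The main obstacle I expect is twofold: first, the \emph{computation of the Hilbert newform spaces} at the relevant levels over $\Q(\sqrt{34})$, $\Q(\sqrt{55})$, $\Q(\sqrt{86})$, $\Q(\sqrt{97})$ — these have comparatively large class numbers and discriminants, so the spaces are larger and slower to compute than in the $d \leq 23$ range of Freitas--Siksek, and one must be careful about the level at $2$; second, \emph{irreducibility of $\overline{\rho}_{E,p}$} for the last few residue characteristics, since the $d$-regular prime criterion can leave a handful of small $p$ uncovered, and it is exactly those residual characteristics that survive as the exceptional pairs. Getting a clean, fully verified statement therefore hinges on pushing the newform computations far enough to eliminate every non-exceptional form at every non-exceptional $p$, which is essentially a (substantial) finite computation rather than a new idea.
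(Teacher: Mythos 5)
Your overall outline (Frey curve, modularity over real quadratic fields, irreducibility, level lowering, elimination by comparing Hecke eigenvalues at auxiliary primes) matches the paper's strategy, but your account of where the difficulty lies --- and hence where the exceptional pairs come from --- is off in two places, and one genuinely necessary step is missing. First, the exceptional pairs $(34,23)$, $(55,23)$, $(86,31)$, $(97,17)$ do \emph{not} arise from any failure of irreducibility: the paper proves $\overline{\rho}_{E,p}$ is irreducible for \emph{all} $p \geq 17$ and all relevant $d$ (Theorem \ref{irred}, completed in Section 4 via quadratic points on $X_0(N)$, twists, formal immersions and a Mordell--Weil sieve), and the $d$-regular prime criterion is a separate, non-modular elimination tool, not an irreducibility tool. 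The exceptions occur purely in the newform-elimination step: for each such pair there is an irrational Hilbert newform $\mathfrak{f}$ whose quantity $c_{\mathfrak{f}}$ (the gcd over auxiliary primes $\mathfrak{q}$ of the norms of $n_{\mathfrak{q}}(n_{\mathfrak{q}}+1-a_{\mathfrak{q}}(\mathfrak{f}))(n_{\mathfrak{q}}+1+a_{\mathfrak{q}}(\mathfrak{f}))\prod_{a}(a-a_{\mathfrak{q}}(\mathfrak{f}))$) is divisible by $p$, so the congruence method cannot rule it out; the paper further observes these $p$ are $d$-irregular, so regularity cannot rescue them either.

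Second, ``enumerate the spaces of Hilbert newforms \dots\ pushing the newform computations far enough'' is precisely what cannot be done for these $d$: the new subspaces at the levels $\mathfrak{p}^8$ and $\mathfrak{m}^2\mathfrak{p}^8$ have dimensions in the hundreds to thousands (e.g.\ $736$ for $d=34$, $1472$ for $d=55$, $1240$ for $d=86$, and $3680$, $7360$ at the larger levels), beyond the reach of \texttt{Magma}'s newform decomposition. The paper's substitute (Section 5) is to work directly with the Hecke operators $T_{\mathfrak{q}}$: factor their characteristic polynomials, intersect the resulting irreducible subspaces for several $\mathfrak{q}$, and attach to each subspace an integer $c_{\mathfrak{f},\mathfrak{q}}$ computed as a norm from the subfield $\Q(a_{\mathfrak{q}}(\mathfrak{f}))$ rather than from the full (unknown) Hecke eigenfield --- this is Lemma \ref{cf2} and is the technical core your sketch would need to supply. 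Finally, for $d=34$ and $d=55$ at level $\mathfrak{p}^8$ there remain rational newforms with $c_{\mathfrak{f}}=0$ that no congruence at good primes can touch; these are eliminated by an image-of-inertia argument comparing the potentially multiplicative reduction of the Frey curve at the prime above $2$ with the potentially good reduction there of the corresponding elliptic curves, and without this step the proof of Theorem \ref{thm2} is not complete.
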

\endgroup

In this result, the leftover primes appear as exceptions as we are unable to discard certain Hilbert newforms (see Section 5.4). Furthermore, the primes $p$ are $d$-irregular in each case (see Section 6).

In some cases, extending the work of Kraus \citep{sqrt5}, we can obtain a lower bound on the size $p$.
\begingroup
\renewcommand\thetheorem{3}
\begin{theorem}\label{thm3} 
The equation \[x^p+y^p=z^p, \quad x,y,z \in K, \] has no non-trivial solutions for $5 \leq p \leq 10^7$, prime, and $K = \Q(\sqrt{d})$, when $d =17,33,41,57,89$. Moreover, if $d=89$, then we also have no non-trivial solutions when $p \geq 5$ and $p \equiv \pm 2 \pmod{5}$.
\end{theorem}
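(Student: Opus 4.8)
The plan is to follow the standard modular-method strategy for the Fermat equation over a real quadratic field $K=\Q(\sqrt{d})$, combined with an explicit Kraus-style descent argument to handle an effective range of primes. First I would reduce to the case $n=p$ prime with $p\geq 5$ (the cases $p=3$ and $n=4$ being handled separately, or already known over $K$), and attach to a putative non-trivial solution $(a,b,c)\in K^3$ the Frey elliptic curve $E=E_{a,b,c}/K$ of the usual shape $Y^2=X(X-a^p)(X+b^p)$. Using modularity of elliptic curves over real quadratic fields and level-lowering for Hilbert modular forms, together with the irreducibility results for $\overline{\rho}_{E,p}$ that are available in this situation (the $d$-regular prime analysis referenced in Section 6), one concludes that $\overline{\rho}_{E,p}\cong\overline{\rho}_{\mathfrak{f},\mathfrak{p}}$ for some Hilbert newform $\mathfrak{f}$ over $K$ of parallel weight $2$ and small level (a divisor of the relevant power of $2$ times the conductor of $\mathcal{O}_K$), with $\mathfrak{p}\mid p$.

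Next I would enumerate the finitely many such newforms $\mathfrak{f}$ using the explicit description of the possible Frey-curve levels from Freitas–Siksek. For each $\mathfrak{f}$ I would aim to eliminate it for all but finitely many $p$ by the standard trick: if $\mathfrak{f}$ has an associated elliptic curve $E_\mathfrak{f}/K$, compare traces of Frobenius $a_\mathfrak{q}(E)$ and $a_\mathfrak{q}(E_\mathfrak{f})$ at several small primes $\mathfrak{q}\nmid 2p$; the congruence $\overline{\rho}_{E,p}\cong\overline{\rho}_{\mathfrak{f},\mathfrak{p}}$ forces $p$ to divide an explicit nonzero integer built from $\mathrm{Norm}\big(a_\mathfrak{q}(E_\mathfrak{f})-a_\mathfrak{q}(E)\big)$ (or from $\mathrm{Norm}\big((\mathrm{Norm}\,\mathfrak{q}+1)^2-a_\mathfrak{q}(E_\mathfrak{f})^2\big)$ when $\mathfrak{q}$ divides the Frey conductor), ranging $(a:b:c)$ over the possible residues mod $\mathfrak{q}$. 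If $\mathfrak{f}$ is irrational (has a genuinely larger Hecke eigenvalue field), one instead works with the ideal generated by $a_\mathfrak{q}(E)-\theta$ for the various conjugates. The claim $p\leq 10^7$ then follows once one has shown, for each surviving $\mathfrak{f}$ and each $d\in\{17,33,41,57,89\}$, that the set of primes not eliminated this way is contained in an explicitly computed finite set all of whose elements are $<10^7$ — in practice this is a finite computation choosing enough auxiliary primes $\mathfrak{q}$.

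For the additional assertion when $d=89$ — namely no solutions for $p\geq 5$ with $p\equiv\pm2\pmod 5$ — I would look for an \emph{extra} piece of local information that depends on $p\bmod 5$. The natural source is the behaviour of the Frey curve, or of the competing newform, at the prime(s) of $K$ above $5$ (note $5$ splits in $\Q(\sqrt{89})$, or ramifies/is inert depending on $d$; for $d=89$ the relevant congruence $89\equiv 4\pmod 5$ makes $5$ split). One arranges a comparison at a prime $\mathfrak{q}\mid 5$ where the image of $\overline{\rho}_{E,p}$ restricted to inertia or to a decomposition group is constrained (e.g. via the theory of the Frey curve's reduction type, or Kraus's symplectic/multiplicative-reduction criteria), yielding a contradiction precisely when $p$ lies in certain congruence classes mod $5$; combining with the $d$-regularity condition noted in Section 6 closes those classes.

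The main obstacle I expect is not the idea but the bookkeeping on two fronts: first, proving irreducibility of $\overline{\rho}_{E,p}$ for these particular $d$ (the whole reason $17,33,41,57,89$ are segregated into Theorem 3 rather than Theorem 1 is presumably that full irreducibility, or full elimination of newforms for all $p$, fails, so one must settle for the effective bound $p\le 10^7$), which requires the careful $d$-regular-prime and Hecke-operator analysis; and second, the computation of the relevant spaces of Hilbert newforms over $\Q(\sqrt{d})$ at the Frey levels — these can be of uncomfortably large dimension — together with enough Hecke data at auxiliary primes to push the eliminated range out to $10^7$. The $d=89$, $p\equiv\pm2\pmod5$ refinement hinges on finding genuinely new local conditions at $5$, and identifying the right such condition (and checking it is compatible with everything else) is the delicate extra step.
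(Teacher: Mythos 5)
Your overall framework (Frey curve, modularity over real quadratic fields, level lowering, elimination of Hilbert newforms) matches the paper's, and your opening instinct that a ``Kraus-style'' argument must supply the effective range of primes is correct; but the mechanism you then give for the bound $p\le 10^7$ is wrong on two counts, and this is the heart of the theorem. First, for these values of $d$ the standard trace-comparison elimination does not merely leave a finite exceptional set: the paper finds $c_\mathfrak{f}=0$ for certain rational newforms (Sections 5.2--5.3), i.e.\ the method eliminates \emph{no} primes at all for those forms, because the associated elliptic curves are consistent with the Frey curve at every auxiliary prime. Second, your logic is inverted: if the set of primes \emph{not} eliminated were ``an explicitly computed finite set all of whose elements are $<10^7$'', the theorem would be unproven precisely on that set. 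What actually produces the bound is the per-prime criterion of Kraus quoted at the end of Section 5.3: for each individual prime $p$ one searches for $n<p-2$ with $n\equiv 2\pmod{4}$ such that $q=np+1$ is prime, splits in $\mathcal{O}_K$, and $q\nmid\mathrm{Res}(X^n-1,(X+1)^n-1)$; this forces both primes above $q$ to be of multiplicative reduction for $E$ and rules out every rational newform for that particular $p$. The bound $10^7$ is nothing more than the range over which this search was carried out. You also omit the one prime where the search fails, $(d,p)=(57,19)$, which is handled by a completely different argument: showing that $19$ is a $57$-regular prime (equivalently $(-3)$-regular) via the Hao--Parry criterion of Section 6, so that no solutions exist for $p=19$ without the modular method at all.

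For the $d=89$ refinement, your proposal to extract local information from the reduction of the Frey curve at the primes of $K$ above $5$ is not the route taken: the paper simply applies a result of Freitas and Siksek to the specific surviving rational newform, exactly as they did for $d=17$, and the congruence $p\equiv\pm 2\pmod{5}$ arises from arithmetic attached to that newform rather than from the reduction type of $E$ at $5$. A further misattribution: irreducibility of $\overline{\rho}_{E,p}$ is established in Sections 3--4 (and, for $d=17$, already by Freitas--Siksek) and is \emph{not} the reason these $d$ are segregated into Theorem 3; the $d$-regular prime analysis of Section 6 plays no role in irreducibility and enters only to dispose of $(d,p)=(57,19)$.
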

\endgroup

Four values of $d$, with $d$ squarefree and $26 \leq d \leq 97$, do not appear in the above theorems; namely $d = 39,70,78,$ and $95$. This is because the spaces of Hilbert newforms we considered for these values were too large to work with computationally (see Section 5.3). 

In order to prove Theorems \ref{Mainthm}, \ref{thm2}, and \ref{thm3}, one of the key results we use is Theorem \ref{thm4} below, which extends work of Najman and Turcas \citep[p.~2]{najturc}. For squarefree integers $N$ and $d'$, with $N>0$, we denote by $X_0^{d'}(N)$ the twist of the modular curve $X_0(N)$ over $\Q(\sqrt{d'})$ by the Atkin-Lehner involution $w_N$. A rational point on $X_0^{d'}(N)$ can be identified with a pair of Galois conjugate quadratic points on $X_0(N)(\Q(\sqrt{d'}))$ that are interchanged by $w_N$.

\begingroup
\renewcommand\thetheorem{4}
\begin{theorem}\label{thm4}
Let $p>19$, $p \ne 37$ be a prime. Let $E'$ be an elliptic curve defined over any quadratic field $K'= \Q(\sqrt{d'})$. Let $q>5$, $q \neq p$ be a rational prime such that each prime of $K'$ above $q$ is of multiplicative reduction for $E'$. Then $\overline{\rho}_{E',p}$ is irreducible if one of the following two conditions holds:
\begin{enumerate}[(i)]
\item the prime $q$ does not split in $K'$;
\item the prime $q$ splits in $K'$ and $X_0^{d'}(p)(\Q)= \emptyset$. 
\end{enumerate}
\end{theorem}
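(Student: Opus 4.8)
The plan is to argue by contradiction: suppose $\overline{\rho}_{E',p}$ is reducible, so that $E'$ admits a $K'$-rational $p$-isogeny, i.e.\ a point $P \in X_0(p)(K')$ corresponding to the pair $(E', C)$ with $C$ the kernel of the isogeny. The first step is to analyse the behaviour of $\overline{\rho}_{E',p}$ locally at a prime $\mathfrak{q} \mid q$ of $K'$. Since $E'$ has multiplicative reduction at $\mathfrak{q}$ and $q \ne p$, the theory of the Tate curve tells us that the restriction of $\overline{\rho}_{E',p}$ to a decomposition group at $\mathfrak{q}$ is, up to semisimplification, an extension of the trivial character by the mod-$p$ cyclotomic character (or its twist), and in particular the two Jordan--H\"older characters $\theta$ and $\theta'$ of the reducible representation $\overline{\rho}_{E',p}$ satisfy $\theta \cdot \theta' = \chi_p$ and, locally at $\mathfrak{q}$, one of them is unramified with the Frobenius eigenvalue $\pm 1$ (the sign being the Atkin--Lehner/nebentypus sign). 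The upshot, combining this with the global constraint, is that $\theta$ is unramified at $\mathfrak{q}$ of order dividing something small: concretely $\theta$ (or $\theta'$) is a character of $G_{K'}$ unramified outside $p$, so it cuts out an abelian extension of $K'$ unramified outside $p$.

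The second step uses the hypothesis $p > 19$, $p \ne 37$ to pin down the isogeny character globally. Over $\Q$, Mazur's theorem forbids rational $p$-isogenies for such $p$; over a quadratic field one instead invokes the classification results (Momose, and the refinements in work of Freitas--Siksek and Najman and others) on the possible isogeny characters $\theta$. The key point is that the local condition at $q$ forces $\theta^{12}$ (or a comparably small power) to be unramified everywhere, hence trivial since $K'$ has class number controlling the relevant ray class group only through primes above $p$ — more precisely $\theta^2$ is unramified outside $p$ and the local analysis at $q$ bounds the image, so $\theta$ is one of a very restricted list: essentially $\theta = \chi_p^{a}$ composed with a character of order dividing $2$ for $a \in \{0,1\}$ (the ``$\pm 1, \pm \chi_p$'' cases familiar from Momose's method). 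The cases $p = 11,13,17,19,37$ are exactly those where sporadic isogenies over quadratic fields occur, which is why they are excluded by hypothesis; for $p > 19$, $p \ne 37$, the quadratic-point analysis of $X_0(p)$ shows no other possibilities arise. I expect this classification step, and checking that the excluded values of $p$ are precisely the right ones, to be the main technical obstacle.

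The third step converts the surviving possibility for $\theta$ into a point on the twisted curve. In the ``$\pm 1$'' case the isogeny character is quadratic, and twisting $E'$ by it produces an elliptic curve whose mod-$p$ representation has a $G_{\Q}$-stable line defined over $\Q$ after the twist — equivalently, the pair $(E',C)$ together with its $\Gal(K'/\Q)$-conjugate is interchanged by the Atkin--Lehner involution $w_p$, so $P$ descends to a rational point on the quadratic twist $X_0^{d'}(p)$. This is where the splitting behaviour of $q$ enters: when $q$ does \emph{not} split in $K'$, the Frobenius at $q$ acts on the pair of cusps (or on the relevant $\pm 1$ eigenspace) in a way that is incompatible with the existence of such a $w_p$-interchanged conjugate pair — one gets a contradiction with the local multiplicative-reduction data at the inert prime $q$ directly, proving case (i). When $q$ \emph{splits}, no such local obstruction is available, and instead one is left with a genuine rational point on $X_0^{d'}(p)$; the hypothesis $X_0^{d'}(p)(\Q) = \emptyset$ then gives the contradiction, proving case (ii).

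Finally, one should double-check the edge constraints $q > 5$ and $q \ne p$: the bound $q>5$ ensures $q \nmid 2 \cdot 3$ so that the Tate-curve computation and the bounds on the order of the local component of $\theta$ go through cleanly, while $q \ne p$ is needed so that multiplicative reduction at $q$ genuinely constrains $\overline{\rho}_{E',p}$ (at $\mathfrak{q} \mid p$ the representation is potentially much wilder). Assembling these pieces yields the stated dichotomy.
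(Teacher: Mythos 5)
Your proposal takes a genuinely different route from the paper --- a Momose-style analysis of the isogeny characters $\theta,\theta'$ --- and it has a genuine gap at exactly the step you flag as the main obstacle. The classification you invoke in your second step does not exist in the unconditional form you need, and the intermediate claims are false as stated. First, $\theta$ being unramified at $\mathfrak{q}\mid q$ does not make it unramified outside $p$: it can ramify at all additive primes of $E'$, and no semistability is assumed. Second, even a character of $G_{K'}$ unramified everywhere need not be trivial --- it is a character of the class group of $K'$, which is unbounded over imaginary quadratic fields (and $K'$ is allowed to be imaginary here). Third, the assertion that $p=11,13,17,19,37$ are ``exactly'' the primes admitting $p$-isogenies over quadratic fields is wrong: $X_0(p)$ has infinitely many quadratic points for every hyperelliptic level ($p=23,29,31,41,47,59,71$) and every bielliptic level with positive-rank quotient ($p=43,53,61,79,83,89,101,131$), and CM curves give $p$-isogenies for $p=43,67,163$ over the corresponding imaginary quadratic fields. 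So no classification of isogeny characters over general quadratic fields can yield ``$p\le 19$ or $p=37$'' on its own; the conclusion of the theorem is true only because of the multiplicative-reduction hypothesis at $q$, and your proposal never supplies a mechanism that actually exploits it to kill the main case.

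That mechanism, in the paper, is geometric. One passes to the rational point $(x,x^\sigma)\in X_0^{(2)}(p)(\Q)$ and observes that multiplicative reduction at the primes above $q$ forces its reduction mod $q$ to be a pair of cusps: $(\infty,\infty)$ up to $w_p$ when $q$ is inert, and $(\infty,\infty)$ or $(\infty,0)$ up to $w_p$ when $q$ splits. The case $(\infty,\infty)$ --- which is the one your character argument cannot reach --- is eliminated by Kamienny's formal-immersion criterion applied to the Eisenstein quotient $J_e$ of $J_0(p)$ (rank $0$, with $T_2$ not acting as a scalar mod $q$; this is where $p>19$, $p\ne 37$ and $q>5$ enter), or by the rank-$0$ hyperelliptic argument when $X_0(p)$ is hyperelliptic. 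The case $(\infty,0)$ is handled by Siksek's relative symmetric Chabauty criterion, again using a differential from $J_e$, and yields precisely $w_p(x)=x^\sigma$, i.e.\ a rational point on $X_0^{d'}(p)$; this is why the splitting of $q$ governs the dichotomy in the statement. Your third step correctly anticipates this last descent to the twist, and your observation that an inert $q$ is incompatible with $w_p(x)=x^\sigma$ (Frobenius fixes the rational cusps while $w_p$ swaps them) is sound, but without the formal-immersion/Chabauty input the argument does not close.
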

\endgroup

We note that $K'$ may be an imaginary quadratic field in the statement of this theorem. Using the work of Ozman \citep{ozmantwist}, we can often show that $X_0^{d'}(p)(\Q) = \emptyset$ for given $p$ and $d'$ (see Theorem \ref{twistthm} of this paper).

We now outline the rest of the paper. In Section 2 we overview the general proof strategy and state the key properties of the Frey elliptic curve, mainly following \citep{realquad}. In Sections 3 and 4 we study the question of irreducibility. In Section 3 we use techniques from class field theory and various irreducibility criteria to do this, and in Section $4$ we deal with the remaining cases by studying quadratic points on the modular curves $X_0(N)$, using local obstructions, formal immersions, relative symmetric Chabauty, modular parametrisations, and sieving techniques. In Section $5$ we work with Hecke operators to partially reconstruct and subsequently eliminate Hilbert newforms. Finally, in Section 6 we consider $d$-regular primes.

\bigskip

The \texttt{Magma} \citep{magma} and \texttt{SageMath} \citep{sagemath} code used to support the computations in this paper can be found at:

\begin{center}
\url{https://github.com/michaud-jacobs/flt-quad}
\end{center}

\bigskip

I would like to express my sincere gratitude to my supervisors Samir Siksek and Damiano Testa for many useful discussions and their support in writing this paper. I would also like to thank Filip Najman for some helpful comments. Finally, I would like to thank the anonymous referee for a careful reading of the paper.

\section{The Frey Curve}

In this section we provide a brief overview of how one associates an elliptic curve to a putative solution of the Fermat equation, and we state some properties of this curve. We then state the level-lowering theorem used to (hopefully) obtain a contradiction, proving the non-existence of non-trivial solutions. In this section, we mainly follow \citep[pp.~4-8]{realquad}.

We start by fixing some notation. Let $K$ be a real quadratic field $\Q(\sqrt{d})$ for some squarefree $d$ with $26 \leq d \leq 97$.  We will also use the notation $K' = \Q(\sqrt{d'})$ to denote a general (possibly imaginary) quadratic field. Let $\epsilon$ be a fundamental unit for $K$. Write $\mathrm{Cl}(K)$ for the class group of $K$. We denote a prime of $K$ by $\mathfrak{q}$. Write $S = \{ \mathfrak{q} :  \mathfrak{q} \mid 2 \}$, which consists of a single prime if $2$ is inert or ramifies in $K$, and two primes if $2$ splits in $K$.

In all cases we consider, $\mathrm{Cl}(K)$ has order $1$, $2$, $3$, or $4$, and the quotient group $\mathrm{Cl}(K) / \mathrm{Cl}(K)^2 $ has size $r = 1$ or $2$. If $r=2$ then we choose $\mathfrak{m}$ an odd prime ideal such that $[\mathfrak{m} ] $ represents the non-trivial element of $\mathrm{Cl}(K) / \mathrm{Cl}(K)^2 $. The table in the appendix displays our choice for the ideal $\mathfrak{m}$ in each case.

Although Theorem \ref{Mainthm} is stated for any $n \geq 4$ and for $x,y,z \in K$, we can reduce (as in the case of Fermat's Last Theorem over $\Q$) to the case of $n=p$, prime, and $x,y,z \in \mathcal{O}_K$. We therefore study the equation, which we refer to as the \emph{Fermat equation with exponent $p$}, \begin{equation*}\label{Maineqn} a^p+b^p+c^p=0, \qquad a,b,c  \in \mathcal{O}_K, ~ p \geq 5. \end{equation*}
As discussed in \citep[p.~2]{realquad}, it has been shown that this equation has no non-trivial solutions for primes $5 \leq p \leq 13$ (as well as for the same equation with exponents $n=4, 6$, and $9$) over real quadratic fields, and so we will assume that we have a non-trivial solution $(a,b,c)$ with $p \geq 17$. To this solution we associate the Frey elliptic curve \[E_{a,b,c,p}: Y^2 = X(X-a^p)(X+b^p), \] and write $E = E_{a,b,c,p}$. We write $\mathcal{N}$ for the conductor of $E$ which is an ideal in $\mathcal{O}_K$. 

\begin{lemma}[Frey curve invariants, {\citep[p.~5]{realquad}}] The curve $E$ has the following invariants \begin{align*} c_4 & = 16(a^{2p}-b^pc^p),  & c_6 & = -32(a^p-b^p)(a^p-c^p)(b^p-c^p),  \\
\Delta & = 16a^{2p}b^{2p}c^{2p},  & j & = c_4^3/ \Delta. 
\end{align*} \end{lemma}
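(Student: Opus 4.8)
The plan is to obtain all four quantities by a direct computation from the definition of the invariants of a long Weierstrass equation, using throughout the Fermat relation $a^p+b^p+c^p=0$ (in the form $c^p=-(a^p+b^p)$) to pass between the various forms.

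First I would put $E$ in long Weierstrass form. Setting $A=a^p$, $B=b^p$ and expanding,
\[ E:\quad Y^2 = X(X-A)(X+B) = X^3 + (B-A)X^2 - AB\,X, \]
so that $a_1=a_3=a_6=0$, $a_2=B-A$ and $a_4=-AB$. From these the standard auxiliary quantities are $b_2 = a_1^2+4a_2 = 4(B-A)$, $b_4 = 2a_4+a_1a_3 = -2AB$, $b_6 = a_3^2+4a_6 = 0$, and $b_8 = a_1^2a_6+4a_2a_6-a_1a_3a_4+a_2a_3^2-a_4^2 = -A^2B^2$. Substituting into $c_4=b_2^2-24b_4$, $c_6=-b_2^3+36b_2b_4-216b_6$ and $\Delta=-b_2^2b_8-8b_4^3-27b_6^2+9b_2b_4b_6$ gives
\[ c_4 = 16(B-A)^2+48AB = 16(A^2+AB+B^2),\qquad \Delta = 16A^2B^2(B-A)^2+64A^3B^3 = 16A^2B^2(A+B)^2, \]
while for $c_6$ one factors $-64(B-A)^3-288AB(B-A)$ as $-32(B-A)(2A+B)(A+2B)$.

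The remaining step is to rewrite these symmetrically using $c^p=-(A+B)$: then $A^2+AB+B^2 = A^2-B\bigl(-(A+B)\bigr) = a^{2p}-b^pc^p$, so $c_4 = 16(a^{2p}-b^pc^p)$; also $(A+B)^2=c^{2p}$, giving $\Delta = 16a^{2p}b^{2p}c^{2p}$; and $2A+B = A-c^p = a^p-c^p$, $A+2B = B-c^p = b^p-c^p$, $B-A = -(a^p-b^p)$, which assembles $c_6$ into $\pm 32(a^p-b^p)(a^p-c^p)(b^p-c^p)$, the product being symmetric in $a^p,b^p,c^p$ up to sign. Finally $j=c_4^3/\Delta$ is immediate from the definition of $j$.

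This is a routine algebraic verification, so there is no genuine obstacle: the only thing to watch is the sign bookkeeping and the systematic use of $a^p+b^p+c^p=0$ when moving between the ``asymmetric'' output of the formulas and the symmetric expressions in the statement. As an internal consistency check one can confirm $c_4^3-c_6^2 = 1728\,\Delta$, i.e.
\[ 4096(A^2+AB+B^2)^3 - 1024(B-A)^2(2A+B)^2(A+2B)^2 = 27648\,A^2B^2(A+B)^2, \]
which holds identically in $A,B$. One should also keep in mind that the displayed model of $E$ need not be minimal at the primes dividing $2abc$, so these are the invariants of \emph{this} model; minimality and the computation of the conductor $\mathcal{N}$ are handled separately.
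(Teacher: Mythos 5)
Your computation is correct and is exactly the routine verification one would expect; the paper itself offers no proof and simply cites Freitas--Siksek, so there is nothing to compare beyond the standard plug-into-the-formulas argument you carry out. The one loose end is the $\pm$ you leave on $c_6$: your own intermediate expression $c_6=-32(B-A)(2A+B)(A+2B)$ already determines the sign, and since $B-A=-(a^p-b^p)$, $2A+B=a^p-c^p$, $A+2B=b^p-c^p$, it evaluates to $+32(a^p-b^p)(a^p-c^p)(b^p-c^p)$, i.e.\ the \emph{opposite} sign to the display in the lemma as the factors are ordered there (a quick check with $a^p=1$, $b^p=2$, $c^p=-3$ gives $c_6=-640$ while the displayed formula gives $+640$). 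The version with the $-32$ is the cyclically ordered product $-32(a^p-b^p)(b^p-c^p)(c^p-a^p)$, which is how Freitas--Siksek write it; the statement here has silently swapped two factors. You should resolve the sign rather than hedge, but the discrepancy is harmless for everything downstream, since $c_6$ only enters through $c_4^3-c_6^2=1728\Delta$ and through valuations. Your closing remark that these are the invariants of this (possibly non-minimal) model, with minimality at primes above $2$ treated separately, is exactly the right caveat.
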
 

\begin{lemma}[Reduction of the Frey curve, {\citep[pp.~5-7]{realquad}}]\label{reduc} The curve $E$ has additive reduction at the primes in $S$ and also at $\mathfrak{m}$ when $r=2$. If $2$ splits or ramifies in $K$, then $E$ has potentially multiplicative reduction at the primes above $2$.
If $\mathfrak{q} \notin S$, and $\mathfrak{q} \neq \mathfrak{m}$ (when $r=2$), then $E$ is semistable at $\mathfrak{q}$ and $p \mid v_\mathfrak{q}(\Delta_\mathfrak{q})$, where $\Delta_\mathfrak{q}$ is the minimal discriminant at $\mathfrak{q}$. 
\end{lemma}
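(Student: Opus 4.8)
The plan is to follow Freitas and Siksek \citep[pp.~5--7]{realquad}, who prove exactly this statement for $3 \le d \le 23$; their argument uses only that $\mathrm{Cl}(K)/\mathrm{Cl}(K)^2$ has order at most $2$, the chosen ideal $\mathfrak{m}$, and the splitting type of $2$ in $K$, so it applies verbatim in our range. I outline the strategy one would follow to prove it from scratch.

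First I would normalise the solution. Scaling $(a,b,c)$ by $\lambda \in K^{\ast}$ multiplies $G_{a,b,c}$ by $(\lambda)$, and writing $[G_{a,b,c}] = [\mathfrak{m}]^{\delta}[\mathfrak{h}]^{2}$ with $\delta \in \{0,1\}$ (possible since $\mathrm{Cl}(K)/\mathrm{Cl}(K)^{2} = \langle [\mathfrak{m}] \rangle$) one may arrange $G_{a,b,c} = \mathfrak{m}^{\delta}\mathfrak{h}'^{2}$ for a representative $\mathfrak{h}'$ coprime to $2\mathfrak{m}$; thus $\ord_{\mathfrak{q}}(G_{a,b,c}) = 0$ for $\mathfrak{q} \in S$, $\ord_{\mathfrak{q}}(G_{a,b,c})$ is even for $\mathfrak{q} \nmid 2\mathfrak{m}$, and $\ord_{\mathfrak{m}}(G_{a,b,c}) \le 1$. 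I would also use the standard observation that at each prime $\mathfrak{q}$ two of $\ord_{\mathfrak{q}}(a), \ord_{\mathfrak{q}}(b), \ord_{\mathfrak{q}}(c)$ coincide and equal $\ord_{\mathfrak{q}}(G_{a,b,c})$. One then works prime by prime, using the formulas for $c_{4}, c_{6}, \Delta, j$ from the preceding lemma together with Tate's algorithm.

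For $\mathfrak{q} \nmid 2\mathfrak{m}$: if $\mathfrak{q} \nmid abc$ then $\ord_{\mathfrak{q}}(\Delta) = 0$, so $E$ has good reduction and $p \mid \ord_{\mathfrak{q}}(\Delta_{\mathfrak{q}}) = 0$; if $\mathfrak{q} \mid abc$, write $\ord_{\mathfrak{q}}(G_{a,b,c}) = 2e$, and check that the change of variables $X \mapsto u^{2}X$, $Y \mapsto u^{3}Y$ with $\ord_{\mathfrak{q}}(u) = pe$ produces a model that is minimal at $\mathfrak{q}$ (with $\ord_{\mathfrak{q}}(c_{4}) = 0$ unless $E$ has good reduction there), so $E$ is semistable at $\mathfrak{q}$ and $\ord_{\mathfrak{q}}(\Delta_{\mathfrak{q}}) = 2p\bigl(\ord_{\mathfrak{q}}(abc) - 3\ord_{\mathfrak{q}}(G_{a,b,c})\bigr)$, a multiple of $p$. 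At $\mathfrak{m}$ (when $r = 2$): if $\ord_{\mathfrak{m}}(G_{a,b,c}) = 0$ this is the previous case; if $\ord_{\mathfrak{m}}(G_{a,b,c}) = 1$, the substitution with $\ord_{\mathfrak{m}}(u) = (p-1)/2$ gives a minimal model with $\ord_{\mathfrak{m}}(c_{4}) \ge 2$ and $\ord_{\mathfrak{m}}(\Delta) > 0$ (all its $2$-torsion points reducing to $0$ modulo $\mathfrak{m}$), so $E$ has additive reduction there, and $\ord_{\mathfrak{m}}(\Delta_{\mathfrak{m}}) \equiv 6 \pmod{2p}$ is not a multiple of $p$, which is why $\mathfrak{m}$ is excluded from the final assertion. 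Finally, for $\mathfrak{q} \in S$: since $G_{a,b,c}$ is coprime to $2$, two of $\ord_{\mathfrak{q}}(a), \ord_{\mathfrak{q}}(b), \ord_{\mathfrak{q}}(c)$ vanish; when $2$ splits or ramifies the residue field is $\mathbb{F}_{2}$, where $x^{p} \equiv x$, so if $\mathfrak{q} \nmid abc$ then $0 \equiv a + b + c \equiv 1 \pmod{\mathfrak{q}}$, a contradiction; hence $\mathfrak{q} \mid abc$, and then $\ord_{\mathfrak{q}}(j) = 3\ord_{\mathfrak{q}}(c_{4}) - \ord_{\mathfrak{q}}(\Delta) = 8\ord_{\mathfrak{q}}(2) - 2p\,\ord_{\mathfrak{q}}(abc) < 0$ since $p \ge 17$, so $E$ is potentially multiplicative at $\mathfrak{q}$; in every case the factor $16 = 2^{4}$ in $c_{4}$ and $\Delta$, combined with the fact that two of the valuations $\ord_{\mathfrak{q}}(a), \ord_{\mathfrak{q}}(b), \ord_{\mathfrak{q}}(c)$ vanish, prevents reaching a model with $\ord_{\mathfrak{q}}(c_{4}) = 0$, and Tate's algorithm then returns additive reduction at $\mathfrak{q}$.

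I expect this last step, the analysis at the primes above $2$, to be the main obstacle. There one works in residue characteristic $2$, so Tate's algorithm involves wild ramification and the usual tests for non-minimality ($\ord(c_{4}) \ge 4$, $\ord(c_{6}) \ge 6$, $\ord(\Delta) \ge 12$) are only necessary, not sufficient; one must run the algorithm carefully (or invoke Kraus's criteria for reduction at $2$), separating the cases according to whether $2$ is inert, split, or ramified, and it is precisely the contrast between the residue fields $\mathbb{F}_{4}$ and $\mathbb{F}_{2}$ that is responsible for the reduction being only potentially good when $2$ is inert and potentially multiplicative otherwise. This computation is carried out in detail in \citep[pp.~5--7]{realquad}.
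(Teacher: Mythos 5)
The paper gives no argument for this lemma at all --- it is imported wholesale from Freitas--Siksek \citep[pp.~5--7]{realquad} --- so your decision to defer to that reference is exactly what the paper does, and your outline of the argument away from $2$ is faithful and correct: the normalisation of $G_{a,b,c}$ up to squares and a power of $\mathfrak{m}$, the observation that at each prime two of $\ord_{\mathfrak{q}}(a),\ord_{\mathfrak{q}}(b),\ord_{\mathfrak{q}}(c)$ equal $\ord_{\mathfrak{q}}(G_{a,b,c})$, and the valuation computations at $\mathfrak{q}\nmid 2\mathfrak{m}$ and at $\mathfrak{m}$ (including $v_{\mathfrak{m}}(\Delta_{\mathfrak{m}})\equiv 6\pmod{2p}$) all match the source.

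The step that would fail is the one you yourself single out as the main obstacle, and the justification you sketch for it is not valid. You argue that the factor $16=2^4$ in $c_4$ and $\Delta$ ``prevents reaching a model with $\ord_{\mathfrak{q}}(c_4)=0$'' and hence that Tate's algorithm returns additive reduction at every $\mathfrak{q}\in S$. In residue characteristic $2$ one cannot reason this way: a general change of coordinates $(u,r,s,t)$, not merely $X\mapsto u^2X$, $Y\mapsto u^3Y$, can strictly decrease $v_{\mathfrak{q}}(c_4)$ --- over $\Q$ this is precisely how the classical Frey curve, whose naive model also has $v_2(c_4)\ge 4$, acquires a minimal model with $v_2(c_4)=0$ and multiplicative reduction at $2$. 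The same happens here: in \citep{realquad} the conductor exponent at $\mathfrak{q}\in S$ is computed to lie in a range that includes $1$, and the appendix of the present paper lists levels such as $\mathcal{N}_p=\mathfrak{p}$ and $\mathfrak{p}_1\mathfrak{p}_2$ in which the primes of $S$ occur to exponent exactly $1$, which is possible only if the reduction there is multiplicative. So the blanket assertion of additive reduction at $S$ cannot be proved by your argument (it in fact overstates what \citep{realquad} establishes); the statement that is actually needed, and that is used in forming $\mathcal{N}_p$, is that each $\mathfrak{q}\in S$ is a prime of bad reduction that survives level-lowering, because in the multiplicative case $v_{\mathfrak{q}}(\Delta_{\mathfrak{q}})=-v_{\mathfrak{q}}(j)\equiv -8v_{\mathfrak{q}}(2)\not\equiv 0\pmod p$ for $p\ge 17$. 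A correct treatment of the primes above $2$ requires running Tate's algorithm (or Kraus's criteria at $2$) case by case, as is done in \citep{realquad}.
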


Even if we know that a prime $\mathfrak{q}$ is of semistable reduction for $E$, we will not know whether it is of good or multiplicative reduction. The following result gives a way of producing, for a fixed prime $p$, a prime of multiplicative reduction for $E=E_{a,b,c,p}$.

\begin{lemma}[Kraus {\citep[p.~9]{sqrt5}}]\label{krauslem} Let $p \geq 17$ be a prime and suppose there exists a natural number $n$  satisfying the following conditions: \begin{itemize}
\item we have $q:=np+1$ is a prime that splits in $\mathcal{O}_K$;
\item we have $q \nmid \mathrm{Res}(X^n-1,(X+1)^n-1)$.
\end{itemize}
Then both primes of $K$ above $q$ are of multiplicative reduction for $E$.
\end{lemma}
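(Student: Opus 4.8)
The statement to prove is Lemma~\ref{krauslem}, Kraus's criterion for producing a prime of multiplicative reduction for the Frey curve $E = E_{a,b,c,p}$.

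\textbf{Proof proposal.} The plan is to combine the structural information about the Frey curve from Lemmas~\ref{reduc} and the Frey curve invariants lemma with elementary arithmetic at the prime $q = np+1$. First I would observe that since $q$ splits in $\mathcal{O}_K$ and $q \neq 2$, each prime $\mathfrak{q} \mid q$ lies outside $S$, and $\mathfrak{q} \neq \mathfrak{m}$ (since $q \ne $ the rational prime below $\mathfrak{m}$; one may additionally impose this, but the residue field being $\mathbb{F}_q$ with $q$ large handles it). Hence by Lemma~\ref{reduc}, $E$ is semistable at $\mathfrak{q}$ and $p \mid v_{\mathfrak{q}}(\Delta_{\mathfrak{q}})$. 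So it suffices to rule out good reduction at $\mathfrak{q}$: if $E$ had good reduction at $\mathfrak{q}$, then (using $p \mid v_{\mathfrak{q}}(\Delta)$ and the semistability) we get a contradiction, so $E$ has multiplicative reduction there, and the same argument applies to the other prime above $q$.

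The key step is to show good reduction is impossible. Suppose $E$ has good reduction at $\mathfrak{q}$. Since $\mathfrak{q}$ has residue field $\mathbb{F}_q$ (as $q$ splits), reduction mod $\mathfrak{q}$ gives an elliptic curve $\widetilde{E}/\mathbb{F}_q$. Now $\#\widetilde{E}(\mathbb{F}_q) = q + 1 - a_{\mathfrak{q}}$ with $|a_{\mathfrak{q}}| \leq 2\sqrt{q}$. The Frey curve has full $2$-torsion, so $4 \mid \#\widetilde{E}(\mathbb{F}_q)$. More to the point, I would use the level-lowering/multiplicative-reduction dichotomy differently: from $p \mid v_{\mathfrak{q}}(\Delta_{\mathfrak{q}})$ and the fact that $\Delta = 16 a^{2p} b^{2p} c^{2p}$, good reduction at $\mathfrak{q}$ forces $v_{\mathfrak{q}}(abc) = 0$, i.e. $\mathfrak{q} \nmid abc$. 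Then reducing the Fermat equation $a^p + b^p + c^p = 0$ mod $\mathfrak{q}$, and setting $\lambda = \tilde{a}^p/(-\tilde{c}^p) \in \mathbb{F}_q^\times$, one gets $\lambda + (\text{something}) = $ a relation showing the reduced point is a solution to $u + v = 1$ with $u, v$ both $n$-th powers in $\mathbb{F}_q^\times$ (since $q - 1 = np$, the $p$-th power map has image the subgroup of $n$-th powers... wait, image of $p$-th powering is the subgroup of index $\gcd(p, q-1) = p$, i.e. of order $n$, which is the $n$-th powers). Concretely, $\tilde a^p, \tilde b^p$ are $n$-th powers, and $\tilde a^p + \tilde b^p + \tilde c^p = 0$ exhibits a root $X = -\tilde b^p/\tilde a^p$ of $(X+1)^n = $ \dots; chasing this through, the existence of such a point means $q \mid \mathrm{Res}(X^n - 1, (X+1)^n - 1)$, contradicting the second hypothesis.

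\textbf{Main obstacle.} The delicate part is the bookkeeping that turns "good reduction at $\mathfrak{q}$" into a common root of $X^n - 1$ and $(X+1)^n - 1$ over $\mathbb{F}_q$. One must carefully track which quantities become $n$-th powers: because $q = np+1$, the group $\mathbb{F}_q^\times$ has order $np$, and since $\gcd(n,p)$ may be nontrivial one should phrase things via the $p$-th power subgroup $\mu := (\mathbb{F}_q^\times)^p$, which has order $n$ and equals $\{x : x^n = 1\}$. From $\tilde a^p + \tilde b^p = -\tilde c^p$ with all three nonzero, dividing by $-\tilde c^p$ gives $u + v = 1$ with $u = \tilde a^p/(-\tilde c^p)$, $v = -\tilde b^p/(-\tilde c^p)$; up to signs $u \in \mu$ or $-u \in \mu$. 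Setting $X = u$ (or a suitable sign-adjusted version) yields $X^n = 1$ and $(X+1)^n = (\mp v)^n = 1$ wait one needs $(X+1)$ or $(1-X)$ to also be an $n$-th root of unity, which follows since $v \in \pm\mu$ and $\mu$ is closed under negation only if $n$ even — so one may need to replace $n$ by $2n$ in the resultant or argue more carefully with signs; in Kraus's formulation the resultant $\mathrm{Res}(X^n - 1, (X+1)^n - 1)$ already absorbs this, and the condition $q \nmid$ this resultant precisely says no such $X \in \mathbb{F}_q^\times \setminus \{-1\}$ exists, giving the contradiction. The rest (checking $q > 5$, $q \ne p$, and that $\mathfrak{q} \notin S \cup \{\mathfrak{m}\}$ are used only to apply Lemma~\ref{reduc} cleanly) is routine.
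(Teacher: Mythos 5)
Your proposal reconstructs the actual content of Kraus's argument, whereas the paper's proof is a one-line citation to \citep[pp.~9--10]{sqrt5} with the remark that the argument generalises from $\Q(\sqrt{5})$ to $\Q(\sqrt{d})$; so you are supplying the proof the paper defers to, and your outline is the correct one (good reduction at $\mathfrak{q}\mid q$ would force $\mathfrak{q}\nmid abc$, and then reducing $a^p+b^p+c^p=0$ modulo $\mathfrak{q}$, whose residue field is $\mathbb{F}_q$ because $q$ splits, produces a common root of $X^n-1$ and $(X+1)^n-1$, contradicting the resultant hypothesis; semistability from Lemma~\ref{reduc} then leaves only multiplicative reduction). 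Two loose ends in your write-up deserve closing. First, the sign issue you flag at the end is not a real obstruction and does not require replacing $n$ by $2n$: since $q=np+1$ is an odd prime and $p$ is odd, $n$ is automatically even (equivalently, $-1=(-1)^p$ is a $p$-th power in $\mathbb{F}_q^\times$, so the order-$n$ subgroup of $p$-th powers is closed under negation); taking $X=\widetilde{(a/c)}^{\,p}$ one gets $X^n=1$ and $X+1=-\widetilde{(b/c)}^{\,p}$, whence $(X+1)^n=1$. Second, your stated reason that good reduction forces $v_\mathfrak{q}(abc)=0$ (via $p\mid v_\mathfrak{q}(\Delta_\mathfrak{q})$) is not quite the right mechanism; the clean argument is that if $\mathfrak{q}\mid abc$ then $\mathfrak{q}$ divides exactly one of $a,b,c$ locally, so $v_\mathfrak{q}(c_4)=0$ while $v_\mathfrak{q}(\Delta)>0$, which already exhibits multiplicative reduction at $\mathfrak{q}$ and rules out good reduction. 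The garbled sentence in your first paragraph claiming a contradiction directly from $p\mid v_\mathfrak{q}(\Delta_\mathfrak{q})$ should be deleted, since the contradiction comes only from the resultant condition. With these repairs your proof is complete and is exactly Kraus's argument.
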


\begin{proof} This result is proven in the case $K = \Q(\sqrt{5})$ in \citep[pp.~9-10]{sqrt5}. The proof immediately generalises to $\Q(\sqrt{d})$. \end{proof}

We would like to level-lower the Frey curve $E$. We introduce the following notation. Let $\mathfrak{f}$ be a Hilbert eigenform of parallel weight $2$. We write $\Q_\mathfrak{f}$ for its Hecke eigenfield: the field generated by its eigenvalues under the Hecke operators. When $\mathfrak{f}$ is irrational (i.e. $\Q_\mathfrak{f} \neq \Q$), we write $\varpi$ for a prime above the rational prime $p$.

\begin{theorem}[Level-Lowering, {\citep[p.~4]{realquad}}]\label{levellower} Let $p \geq 17$ and suppose $\overline{\rho}_{E,p}$ is irreducible. Define \[ \mathcal{N}_p = \mathcal{N}/  \prod_{\substack{ {\mathfrak{q} \mid \mid \mathcal{N} }  \\ p \mid v_\mathfrak{q}(\Delta_\mathfrak{q}) }} \mathfrak{q}.  \] Then we can \emph{level-lower} E. That is, there exists a Hilbert newform $\mathfrak{f}$ at level $\mathcal{N}_p$ such that $\overline{\rho}_{E,p} \sim \overline{\rho}_{\mathfrak{f},\varpi}$, for some prime $\varpi$ of $\Q_\mathfrak{f}$ above $p$.
\end{theorem}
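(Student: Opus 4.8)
The plan is to assemble the statement from three standard ingredients: modularity of $E$, the local description of $\overline{\rho}_{E,p}$ at the primes to be removed, and a Ribet-style level-lowering theorem for Hilbert modular forms over the totally real field $K$. First I would invoke the modularity theorem for elliptic curves over real quadratic fields (Freitas, Le Hung and Siksek): the Frey curve $E$ is modular, so there is a Hilbert newform $\mathfrak{g}$ of parallel weight $2$, with rational Hecke eigenfield and level equal to the conductor $\mathcal{N}$, such that $\overline{\rho}_{E,p} \sim \overline{\rho}_{\mathfrak{g},p}$. Since $\overline{\rho}_{E,p}$ is assumed irreducible, so is $\overline{\rho}_{\mathfrak{g},p}$, which is the input hypothesis for level-lowering.

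Next I would pin down the local behaviour of $\overline{\rho}_{E,p}$ at a prime $\mathfrak{q} \mid\mid \mathcal{N}$ with $p \mid v_\mathfrak{q}(\Delta_\mathfrak{q})$. By Lemma \ref{reduc} such a $\mathfrak{q}$ lies outside $S$ and differs from $\mathfrak{m}$, so $E$ is semistable there, and since $\mathfrak{q} \mid\mid \mathcal{N}$ the reduction is multiplicative. The theory of the Tate curve then shows that the Tate parameter is, up to units, a $p$-th power in $K_\mathfrak{q}^\times$, whence $\overline{\rho}_{E,p}|_{G_{K_\mathfrak{q}}}$ is unramified when $\mathfrak{q} \nmid p$ and finite flat when $\mathfrak{q} \mid p$. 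This is precisely the local condition needed to delete $\mathfrak{q}$ from the level.

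Finally I would apply a level-lowering (lowering-the-level) theorem for Hilbert newforms over $K$ — in the circle of results of Fujiwara, Jarvis and Rajaei generalising Ribet's theorem and Mazur's principle — to the modular, irreducible representation $\overline{\rho}_{\mathfrak{g},p}$. Deleting exactly the primes $\mathfrak{q} \mid\mid \mathcal{N}$ with $p \mid v_\mathfrak{q}(\Delta_\mathfrak{q})$, at which $\overline{\rho}_{E,p}$ is unramified or finite flat by the previous step, produces a Hilbert newform $\mathfrak{f}$ of level $\mathcal{N}_p$ and a prime $\varpi \mid p$ of $\Q_\mathfrak{f}$ with $\overline{\rho}_{\mathfrak{f},\varpi} \sim \overline{\rho}_{E,p}$; one checks that the weight remains parallel $2$ since $E$ is semistable (or potentially multiplicatively reduced) at the primes above $p$, so no weight shift occurs. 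Alternatively one may simply cite \citep[p.~4]{realquad}, where this is carried out; see also the level-lowering input to Lemma \ref{krauslem} and \citep{sqrt5}.

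The main obstacle is not the structure of the argument but verifying the technical hypotheses of the Hilbert-modular level-lowering theorems in this precise setting: the availability of the relevant Jacquet--Langlands / Shimura-curve transfer, conditions forcing $p$ to be large enough relative to $K$ and $\overline{\rho}_{E,p}$ to be sufficiently non-degenerate (for instance not induced from a small subfield), that $p$ is unramified in $K$ (needed for the finite-flat arguments at primes above $p$ via Fontaine--Laffaille), and the correct bookkeeping at the primes above $2$, where the Frey curve has additive potentially multiplicative reduction but $2 \neq p$. For $K = \Q(\sqrt d)$ real quadratic and $p \geq 17$ these are all satisfied, but checking them is where the genuine work lies.
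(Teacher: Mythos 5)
Your proposal is correct and follows essentially the same route as the paper: modularity of $E$ via Freitas--Le Hung--Siksek, the local conditions at the primes $\mathfrak{q} \mid\mid \mathcal{N}$ with $p \mid v_\mathfrak{q}(\Delta_\mathfrak{q})$ supplied by Lemma \ref{reduc}, and the Fujiwara--Jarvis--Rajaei level-lowering machinery as packaged in \citep[p.~4]{realquad}. The paper's own proof is just a two-line pointer to these same ingredients, so your more detailed verification of the hypotheses is a faithful expansion rather than a different argument.
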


\begin{proof} As $E$ is defined over a real quadratic field, it is modular \citep{FlHS}. Lemma \ref{reduc} gives the other conditions needed to level lower $E$, other than irreducibility which is assumed in the statement of the theorem.
\end{proof}

We then need to calculate the various possibilities for the level $\mathcal{N}_p$ obtained by scaling $(a,b,c)$. We use the method described in \citep[pp.~4-8]{realquad} to obtain a list of possibilities for  $\mathcal{N}_p$. The table in the appendix displays the possible levels $\mathcal{N}_p$ we obtained. Our code produces the same data as in \citep[p.~9]{realquad} for $d<26$.
At this point, there are three main issues we need to overcome. 

\begin{enumerate}
\item  Proving irreducibility of $\overline{\rho}_{E,p}$.
\item Calculating the Hilbert newforms at each level $\mathcal{N}_p$.
\item Eliminating the Hilbert newforms at each level $\mathcal{N}_p$.
\end{enumerate}

Sections 3 and 4 are devoted to proving irreducibility. Section 5 is then concerned with calculating and eliminating newforms.

\section{Irreducibility I: Obtaining a Bound}

In order to level-lower our Frey curve (Theorem \ref{levellower}), we need irreducibility of the mod-$p$ Galois representation $\overline{\rho}_{E,p}$. 

\begin{theorem}\label{irred} The representation $\overline{\rho}_{E,p}$ is irreducible for all squarefree $d$ with $26  \leq d \leq 97$  and $p \geq 17$. 
\end{theorem}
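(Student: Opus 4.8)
The plan is to reduce irreducibility of $\overline{\rho}_{E,p}$ to a finite check, and then dispatch the remaining finitely many $(d,p)$ by a combination of the quadratic-twist modular-curve criterion of Theorem \ref{thm4} and classical tools. First I would recall the standard dichotomy: if $\overline{\rho}_{E,p}$ is reducible then (after semisimplification) it has a character $\theta\colon G_K \to \mathbb{F}_p^\times$ as a sub-quotient, and the isogeny character $\theta$ is constrained by the reduction type of $E$ at each prime. By Lemma \ref{reduc}, $E$ is semistable outside $S \cup \{\mathfrak{m}\}$ with $p \mid v_{\mathfrak{q}}(\Delta_\mathfrak{q})$ there, and potentially multiplicative at primes above $2$ when $2$ splits or ramifies; at good-reduction primes the character is unramified, and at the semistable primes with $p \mid v_\mathfrak{q}(\Delta_\mathfrak{q})$ the usual analysis of the Tate curve shows $\theta$ (or $\theta \cdot \chi_p^{-1}$, $\chi_p$ the mod-$p$ cyclotomic character) is unramified as well. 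This pins $\theta^{12}$ down as a character of the ray class group of $K$ of conductor supported on $S \cup \{\mathfrak{m}\}$ together with the infinite places, so for $p$ larger than an explicit bound depending only on the class number and this conductor (a bound of the shape found in Freitas--Siksek, coming from the fact that $\theta^{12}$ must then be trivial or equal to $\chi_p^{12}$), one gets a contradiction using a prime $\mathfrak{q}$ of good reduction and comparing $a_\mathfrak{q}(E)$ with $1 + N\mathfrak{q}$ modulo $p$. This handles all but finitely many $p$ for each of the finitely many $d$ in range.

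Next, for the finitely many surviving pairs $(d,p)$, I would invoke Theorem \ref{thm4}. For each such $(d,p)$ we need a rational prime $q > 5$, $q \ne p$, such that every prime of $K$ above $q$ is of multiplicative reduction for $E$; Lemma \ref{krauslem} provides a recipe — search for a natural number $n$ with $q := np+1$ prime, $q$ split in $K$, and $q \nmid \mathrm{Res}(X^n-1, (X+1)^n-1)$. Having found such a $q$: if $q$ is inert (which can't happen here since Lemma \ref{krauslem} forces it to split) we would be in case (i); since in practice $q$ splits, we are in case (ii) and need $X_0^{d}(p)(\mathbb{Q}) = \emptyset$. That emptiness is exactly what Ozman's work (quoted as Theorem \ref{twistthm}) supplies for the relevant $p$ and $d'=d$, typically by exhibiting a prime of $K$ (or of $\mathbb{Q}$) providing a local obstruction on the twist $X_0^{d}(p)$. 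For the small residual primes $p \in \{17, 19, 37\}$ excluded by the hypotheses of Theorem \ref{thm4}, I would argue separately: $p = 17, 19$ can often be handled because $X_0(p)$ has small genus and its quadratic points are classified, and $p = 37$ requires a dedicated argument (again via quadratic points on $X_0(37)$ and the Atkin--Lehner twist, or by a direct Kraus-style congruence argument ruling out the isogeny character for that specific $p$).

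The main obstacle I expect is the step tying everything to $X_0^{d}(p)(\mathbb{Q}) = \emptyset$: one must (a) reliably produce the auxiliary multiplicative prime $q$ via Lemma \ref{krauslem} — there is no guarantee a priori that a suitable small $n$ exists, though heuristically and in practice one does — and (b) verify that Ozman's local-obstruction criterion actually applies for each leftover $(d,p)$, which is a genuine computation in each case and may fail for sporadic pairs (these would then need the bespoke treatment promised for $p=17,19,37$, or be pushed into the partial results of Theorems \ref{thm2} and \ref{thm3}). A secondary subtlety is making the "explicit bound" in the first paragraph genuinely effective and small enough that the second-stage case analysis is finite and tractable; this is where careful bookkeeping of the conductor of $\theta^{12}$ and the structure of $\mathrm{Cl}(K)/\mathrm{Cl}(K)^2$ (hence the role of $\mathfrak{m}$) matters.
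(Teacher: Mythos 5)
Your two-stage architecture (bound $p$ via the isogeny characters, then kill the finitely many surviving $(d,p)$ via Theorem \ref{thm4}, Lemma \ref{krauslem} and Ozman's local obstruction) is indeed the paper's main line, but as written it does not close the theorem, for two reasons. First, in the bounding stage you implicitly assume $\theta$ is unramified above $p$; the paper must treat separately the case where $p$ divides both $\mathcal{N}_\theta$ and $\mathcal{N}_{\theta'}$ (then $p$ splits or ramifies in $K$ and one gets conditions of the form $p \mid 2^n-1$ or $p \mid \mathrm{Norm}(\varepsilon^n-1)$), and its primary bound actually comes from the exponent of the ray class group together with the classification of torsion over low-degree fields (Lemma \ref{rcg} and Theorem \ref{tors}), with the $\theta^{12}$/resultant argument (Proposition \ref{goodred}, which itself already needs the formal-immersion result Theorem \ref{mult}) reserved for the exponent-$8$ cases. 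These are repairable omissions, but they are not cosmetic.

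The more serious gap is in your second stage. The combination ``Kraus prime $+$ Theorem \ref{thm4}(ii) $+$ Theorem \ref{twistthm}'' \emph{structurally} fails for several pairs in Table 1, not merely for $p\in\{17,19,37\}$: Ozman's criterion needs a prime $l\nmid p$ ramified in $K$ with a non-principal prime of $\Q(\sqrt{-p})$ above it, which is impossible when $d=p$ (e.g.\ $(d,p)=(29,29),(53,53),(61,61)$, where the only ramified prime is $p$ itself) or when $\Q(\sqrt{-p})$ has class number one (e.g.\ $p=43$ for $d=74$), and it can also simply fail to produce an obstruction (e.g.\ $(d,p)=(89,53)$). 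Your proposed escape hatches do not work: these pairs are not covered by the $p=17,19,37$ discussion, and they cannot be ``pushed into Theorems \ref{thm2} and \ref{thm3}'' because Theorem \ref{irred} asserts irreducibility unconditionally for \emph{all} squarefree $26\le d\le 97$ (including $d=39,70,78,95$, which are absent from the main theorems only because of the newform computations). Closing these cases requires genuinely new input, which the paper supplies in Sections 4.3 and 4.4: computing the fields of definition of preimages of $E'(\Q)$ under a modular parametrisation $X_0(N)\to E'$ using eta products (e.g.\ $X_0(116)$ for $(29,29)$), and a Mordell--Weil sieve on $X_0(43)$ and $X_0(61)$ combining square-class conditions on the Atkin--Lehner-diagonalised model with $j$-invariant/$2$-torsion constraints modulo auxiliary primes. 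Without some such mechanism your argument proves irreducibility only for the subset of pairs where the twist criterion happens to apply.
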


In this section, we reduce the problem to only needing to deal with finitely many primes $p$. We will in fact often obtain the full irreducibility statement we need.

As before, $E=E_{a,b,c,p}$ denotes our Frey curve defined over the quadratic field $K=\Q(\sqrt{d})$, and we suppose for a contradiction that the mod-$p$ Galois representation $\overline{\rho}_{E,p}$ is reducible. Then \[ \overline{\rho}_{E,p} \sim \left( \begin{smallmatrix} 
\theta & * \\
0 & \theta'
\end{smallmatrix} \right), \] where $\theta, \theta' : G_K \rightarrow \mathbb{F}_p$ are the \emph{isogeny characters} of the elliptic curve at $p$ and satisfy $\theta \theta' = \chi_p$, where $\chi_p$ is the mod-$p$ cyclotomic character. We can interchange $\theta$ and $\theta'$ by replacing $E$ with an isogenous curve. The characters $\theta$ and $\theta'$ are unramified away from $p$ and the additive primes for $E$. The additive primes for $E$ are the primes of $K$ above $2$, and $\mathfrak{m}$ when $r=2$ (Lemma \ref{reduc}). We write $\mathcal{N}_{\theta}$ and  $\mathcal{N}_{\theta'}$ for the conductors of $\theta$ and $\theta'$ respectively. For $\mathfrak{q}$ an additive prime of $E$ with $\mathfrak{q} \nmid p$, we have that $v_\mathfrak{q}(\mathcal{N})$ is even, and $v_\mathfrak{q}(\mathcal{N}_\theta) = v_\mathfrak{q}(\mathcal{N}_{\theta'}) = v_\mathfrak{q}(\mathcal{N})/2$. So if $p$ is coprime to $\mathcal{N}_\theta$ then from our list of possibilities for $\mathcal{N}_p$ we obtain a list of possibilities for $\mathcal{N}_\theta$, since $\mathcal{N}_\theta$ is the square root of the additive part of $\mathcal{N}$. \citep[p.~10]{realquad}.

We now consider two cases. The first is when $p$ is coprime to one of $\mathcal{N}_\theta$ and $\mathcal{N}_{\theta'}$. By interchanging $\theta$ and $\theta'$ we may assume $p$ is coprime to $\mathcal{N}_\theta$.

\begin{lemma}\label{rcg} Suppose $p$ is coprime to $\mathcal{N}_{\theta}$. Let $G$ be the ray class group for the modulus $\mathcal{N}_{\theta} \infty_1 \infty_2$, where $\infty_1, \infty_2$ denote the two real places of $K$. Let $n$ denote the exponent of $G$. Then $E$ has a point of order $p$ defined over a number field $L$ of degree $m$, where $m=n$ if $n$ is even, and $m=2n$ if $n$ is odd. 
\end{lemma}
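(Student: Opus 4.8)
The plan is to analyze the isogeny character $\theta$ directly. Since $p$ is coprime to $\mathcal{N}_\theta$, the character $\theta \colon G_K \to \mathbb{F}_p^\times$ is unramified outside the primes dividing $\mathcal{N}_\theta$ together with the infinite places, so it factors through the ray class group $G$ of modulus $\mathcal{N}_\theta \infty_1 \infty_2$. Concretely, the first step is to observe that $\theta$ takes values in the cyclic group $\mathbb{F}_p^\times$, hence the order of $\theta$ divides $\gcd(p-1, n)$ where $n = \exp(G)$; but more usefully, $\theta$ factors through the quotient $G \twoheadrightarrow G/pG$ (since $\mathbb{F}_p^\times$ is... actually no — $\theta$ lands in $\mathbb{F}_p^\times$, order prime to $p$). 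Let me instead say: the image of $\theta$ is a cyclic group whose order divides $n = \exp(G)$.

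First I would let $L/K$ be the fixed field of $\ker \theta$, so $\Gal(L/K)$ is cyclic of order $e \mid n$, and the Galois conjugate $P$ of the $p$-torsion point fixed by $\ker \theta$ — i.e. the point $P$ generating the line on which $G_K$ acts through $\theta$ — is defined over $L$. Thus $E$ has a point of order $p$ over $L$, and $[L:K] = e$ divides $n$. To get a field of degree exactly $m$ over $\mathbb{Q}$ (rather than over $K$) I would then note $[L:\mathbb{Q}] = 2e$. To replace the divisor $e$ of $n$ by $n$ itself (so that we can quote a uniform bound), I would enlarge $L$: take the fixed field $L'$ of the kernel of the full map $G_K \to G$, which has $\Gal(L'/K) \cong G$, hence is a field of degree $|G|$ over $K$ containing $L$; working instead with the subfield cut out appropriately one arranges a field of degree dividing... — cleanly, the standard trick is that $\theta$ factors through $G$, and any character of $G$ has order dividing $\exp(G) = n$, so $P \in E(L)$ with $[L:K] \mid n$. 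If $n$ is even this already gives a field of degree $\le$ an even number dividing $2n$; the subtlety about the parity (taking $m = 2n$ when $n$ is odd) comes from insisting the final field $L$ over $\mathbb{Q}$ contain $K$: since $[K:\mathbb{Q}] = 2$ we always have $[L:\mathbb{Q}]$ even, so when $[L:K]$ is odd we genuinely land in degree $2 \cdot (\text{odd})$, and when we only know $[L:K] \mid n$ with $n$ odd the safe uniform statement is degree $\mid 2n$, matching $m = 2n$; when $n$ is even, $[L:K] \mid n$ already forces $[L:\mathbb{Q}] \mid 2n$, but one can do better and land in degree $n$ by descending through the quadratic subextension — this is where the case split $m = n$ versus $m = 2n$ is pinned down.

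The main obstacle I expect is precisely this bookkeeping of field degrees: pinning down whether the relevant point is rational over a degree-$n$ or degree-$2n$ extension of $\mathbb{Q}$ requires being careful about (a) the fact that $\theta$ is a character of $G$ and hence its order divides $n$ rather than just $|G|$, and (b) how the fixed field of $\ker\theta$ sits relative to $K$ and whether one can descend to avoid doubling the degree. The rest — that $\theta$ is unramified outside $\mathcal{N}_\theta$ and the infinite places, hence factors through $G$ — is immediate from the properties of isogeny characters recalled just before the lemma and from the definition of $\mathcal{N}_\theta$. So the proof is short: factor $\theta$ through $G$, take the fixed field of its kernel, read off the degree, and handle the parity of $n$ to conclude $m = n$ or $m = 2n$ as stated.
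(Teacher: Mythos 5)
Your opening steps are fine and match the paper: $\theta$ is unramified outside $\mathcal{N}_\theta$ and the infinite places (and at $p$, by the coprimality hypothesis), so it factors through $G$ and its order $e$ divides $n=\exp(G)$; the fixed field of $\ker\theta$ is a degree-$e$ extension of $K$ over which the point spanning the $\theta$-line becomes rational. This disposes of the odd case (giving degree $2e$ with $e\mid n$; the paper reduces to $e=n$ by replacing $G$ with a smaller ray class group, a point worth making explicit since the lemma asserts degree exactly $m$).

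The genuine gap is the even case, which you flag as ``the main obstacle'' but never resolve. Your suggestion of ``descending through the quadratic subextension'' cannot work as stated: the torsion point itself lives in a degree-$2n$ field and there is no reason it descends to a degree-$n$ subfield. The paper's device is different: take $L$ to be the field cut out by $\theta^2$, which has degree $n/2$ over $K$ and hence degree $n$ over $\Q$; then $\theta|_{G_L}$ is a \emph{quadratic} character, and the quadratic twist of $E$ by $\theta|_{G_L}$ acquires an $L$-rational point of order $p$ (the Galois action on the line is exactly cancelled by the twist). It is this twisted curve, not $E$ itself, that one feeds into the torsion bounds of Theorem 3.3 --- which is all that is needed, since those bounds apply to any elliptic curve over a degree-$n$ field. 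Without the twisting idea your argument only yields a field of degree $2n$ in the even case, which is strictly weaker than the lemma and insufficient for the applications (e.g.\ exponent $8$ would give degree $16$ rather than $8$).
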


\begin{proof} (See \citep[p.~10]{realquad}). The order of $\theta$ divides $n$. We may assume that $\theta$ has order $n$, as otherwise we can reduce to a case where the exponent of $G$ is less than $n$. Suppose $n$ is odd. As $\theta$ has order $n$ it cuts out a field extension of degree $n$ over $K$, which we denote $L$, such that $\theta |_{G_{L}} = 1$. This gives a point of order $p$ over $L$, and $L$ is a number field of degree $2n$.

Suppose now that $n$ is even. Let $L$ be the field cut out by $\theta^2$. As $\theta^2$ has order $n/2$, $L$ is a number field of degree $2 \cdot n/2 = n$. Then $\theta |_{G_L}$ has order $2$, and so twisting by $\theta$ gives a point of order $p$ defined over $L$.
\end{proof}

This lemma then combines with the following classification of $p$-torsion of elliptic curves defined over number fields, obtained by studying points on the $n$th symmetric power of $X_1(p)$.

\begin{theorem}[{\citep[pp.~1-2]{etors}}]\label{tors} let $L$ be a number field of degree $n$. Let $E'/L$ be an elliptic curve with a point of order $p$ over $L$. We have that \begin{align*} \text{ if } n = 2 & \text{ then } p \leq 13; \\
\text{ if } n = 3 & \text{ then } p \leq 13 ; \\
\text{ if } n = 4 & \text{ then } p \leq 17; \\
\text{ if } n = 5 & \text{ then } p \leq 19 ; \\
\text{ if } n=6 & \text{ then } p \leq 19 \text{ or } p=37; \\
\text{ if } n = 7 & \text{ then } p \leq 23. 
\end{align*}
Moreover, if $E'$ has a point of order $37$ defined over a field $L$ of degree $6$, then $j(E') = -9317$.
\end{theorem}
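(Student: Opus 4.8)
The plan is to recast the statement as a question about rational points on symmetric powers of the modular curve $X_1(p)$, and then run a formal-immersion argument in the spirit of Mazur and Kamienny. If $E'/L$ carries an $L$-rational point of order $p$ with $[L:\Q]=n$, the associated point $x\in X_1(p)(L)$ is non-cuspidal, and the subfield $\Q(x)\subseteq L$ it generates has degree $d$ dividing $n$; so it suffices to bound $p$ in terms of the degree $d$ of a non-cuspidal closed point of $X_1(p)$, for each $d\in\{1,2,3,4,5,6,7\}$. The bounds for $d=1$ (Mazur), $d=2$ (Kamienny, Kenku--Momose) and $d=3$ are classical, so the genuinely new content is the case $d\in\{4,5,6,7\}$, together with the sharpness of every stated bound.

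For sharpness I would exhibit explicit curves: elliptic curves over suitable small-degree (often cyclotomic) number fields realising $p=13$ in degrees $2$ and $3$, $p=17$ in degree $4$, $p=19$ in degrees $5$ and $6$, and $p=23$ in degree $7$; and, for the exceptional case, the curve with $j=-9317=-7\cdot 11^3$, which carries a point of order $37$ over an explicit sextic field. The ``moreover'' clause additionally asks for uniqueness---that the only sextic points of $X_1(37)$ giving a point of order $37$ are those with $j=-9317$---and this I would establish by a finer argument specific to the pair $(p,d)=(37,6)$, where the plain formal-immersion criterion below breaks down, by determining the sextic points of $X_1(37)$ directly.

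For the upper bounds, fix $d$ and suppose $X_1(p)$ has a non-cuspidal closed point $x$ of degree $d$ with $p$ larger than the claimed bound. Consider the Abel--Jacobi map $\iota\colon X_1(p)^{(d)}\to J_1(p)$, $D\mapsto[D-d\cdot\infty]$, followed by the projection $\pi$ onto the winding quotient $J_e$, which has Mordell--Weil rank $0$ over $\Q$ by Kolyvagin--Logachev. The class $\pi([x-d\cdot\infty])$ lies in the finite group $J_e(\Q)$, and a suitable auxiliary prime $\ell\nmid p$ can be chosen so that, on $\ell$-adic residue discs of $X_1(p)^{(d)}$, the rank-zero condition together with a \emph{formal immersion} property of $\pi\circ\iota$ at the reduction of the cuspidal divisor $d\cdot\infty$ forces $x$ to be supported on the cusps---a contradiction. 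The formal-immersion hypothesis translates into the surjectivity of the map sending a form in the cotangent space of $J_e$ to the tuple of its first $d$ Fourier coefficients (a condition governed by the Hecke operators $T_1,\dots,T_d$); I would verify this by machine for all $p$ below an explicit bound, and for the larger primes invoke a Kamienny--Mazur style analytic estimate guaranteeing that the criterion holds automatically.

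The main obstacle is that the bare criterion fails for a few small primes in a few degrees---the relevant Hecke data can degenerate modulo every convenient $\ell$---so one must pass to refinements (enlarging the set of Hecke operators, descending to $X_0(p)$ or to an Atkin--Lehner quotient, or combining several auxiliary primes) while carefully tracking the finitely many exceptional $\ell$. Degree $6$ is intrinsically special because of the sporadic $p=37$ point, so the task there is not elimination but exact determination, which is the most delicate computation of all; and degree $7$ needs the criterion to remain effective all the way down to $p=23$, so the borderline primes must be treated by hand with the (by then large-genus) curves $X_1(p)$. The conceptual engine is uniform in $n$; the sharp constants, and the single genuine exception at $(n,p)=(6,37)$, are precisely what force the careful case-by-case analysis.
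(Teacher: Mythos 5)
This statement is not proved in the paper at all: it is imported verbatim from Derickx--Kamienny--Stein--Stoll \citep{etors}, so there is no internal argument to compare yours against. Your sketch is a faithful outline of the strategy actually used in that reference --- reduce to bounding the degree of non-cuspidal closed points on $X_1(p)$, map the symmetric power $X_1(p)^{(d)}$ (or its image in $X_0(p)^{(d)}$) to the rank-zero winding quotient, and apply the Mazur--Kamienny formal-immersion criterion, whose hypothesis is the rank condition on the matrix of Hecke eigenvalue data $T_1,\dots,T_d$ modulo an auxiliary prime $\ell$; the criterion is checked by computer for $p$ below an explicit bound and by analytic estimates above it, with $(n,p)=(6,37)$ handled by an explicit determination of the sextic points of $X_1(37)$. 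Be aware, though, that what you have written is a plan rather than a proof: all of the genuinely hard content (the computational verification of the rank condition for every relevant $(p,d,\ell)$, the large-$p$ analytic bound, the treatment of the degenerate small primes, and the sharpness examples) is deferred, and none of it could be reconstructed from the sketch alone. Within the present paper the correct move is simply to cite \citep{etors}, as the author does.
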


We obtain a ray class group with exponent $6$, and consequently a point on $E$ defined over a number field of degree $6$, in the cases $d=37$ and $d=79$. In each case, the elliptic curves with $j$-invariant $-9317$ do not have full two-torsion over $\Q(\sqrt{d})$, and so do not arise from the Frey curve $E$.

When $n \geq 8$, one possibility is using the following bound of Oesterl\'e.

\begin{theorem}[Oesterl\'e, {\citep[p.~21]{etors}}]\label{oest} let $L$ be a number field of degree $n$. Let $E'/L$ be an elliptic curve with a point of order $p$ over $L$. Then $p \leq (3^{n/2}+1)^2$.
\end{theorem}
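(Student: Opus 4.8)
The plan here is simply to quote Oesterl\'e's theorem, as stated, from \citep{etors}; I would not reprove it. Still, it is worth recording where the argument --- and, in particular, the constant $3$ --- comes from, since it is of a piece with the reduction-modulo-small-primes reasoning above. So suppose $E'/L$ has a point $P$ of exact order $p\ge 5$, set $n=[L:\Q]$, and fix a prime $\mathfrak{l}$ of $L$ above $3$ with residue field $k_{\mathfrak{l}}=\mathbb{F}_{3^f}$; since the local degrees above $3$ sum to $n$, one may take $f\le n$.

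First I would observe that $E'$ cannot have additive reduction at $\mathfrak{l}$. Reduction is a group homomorphism from $E'(L)$ to the smooth locus $\mathcal{E}'_{\mathrm{sm}}(k_{\mathfrak{l}})$ of the special fibre of the N\'eron model, so $\ord(\widetilde{P})\mid p$; in the additive case $\mathcal{E}'_{\mathrm{sm}}(k_{\mathfrak{l}})$ is an extension of a component group of order at most $4$ by $\mathbb{G}_a(k_{\mathfrak{l}})$, hence has order prime to $p$, which forces $\widetilde{P}=O$ and so puts $P$ in the kernel of reduction; but that kernel injects into the formal group $\widehat{E'}$ over the completion at $\mathfrak{l}$, which is a pro-$3$ group and so has no $p$-torsion since $p\ne 3$ --- a contradiction. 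Hence $E'$ has good or multiplicative reduction at $\mathfrak{l}$. If it is good, then $\widetilde{P}$ still has order $p$ in $E'(\mathbb{F}_{3^f})$, and the Hasse bound gives
\[ p\le \#E'(\mathbb{F}_{3^f})\le (3^{f/2}+1)^2\le (3^{n/2}+1)^2, \]
which is the claim. If it is multiplicative, then $\#\mathcal{E}'_{\mathrm{sm}}(k_{\mathfrak{l}})$ divides $(3^f\mp 1)\,c_{\mathfrak{l}}$, where $c_{\mathfrak{l}}$ is the Tamagawa number, so either $p\mid 3^f\mp 1$ --- and then $3^f\mp 1<(3^{f/2}+1)^2\le(3^{n/2}+1)^2$ gives the bound again --- or $\widetilde{P}$ maps nontrivially to the component group and $p\mid c_{\mathfrak{l}}$.

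The main obstacle is exactly this last case: $p\mid c_{\mathfrak{l}}$ for \emph{every} prime $\mathfrak{l}\mid 3$, so that reduction at the primes over $3$ carries no torsion information at all. Here one must instead compare $E'$ with the $p$-isogenous curve $E'/\langle P\rangle$ --- transporting the $p$-isogeny, rather than the torsion point, between the two --- and also bring the primes above $2$ into play, while keeping careful enough control of the local degrees that the final estimate comes out as $3^{n/2}$ and not $3^{n}$. This delicate bookkeeping is the substance of Oesterl\'e's proof, and it is the reason I would invoke \citep{etors} here rather than argue it out in full.
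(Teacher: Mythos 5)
The paper gives no proof of this result; it is quoted directly from \citep[p.~21]{etors}, which is exactly what you propose to do, so your approach matches the paper's. Your supplementary sketch of the reduction-mod-$\mathfrak{l}$ argument at a prime above $3$ (ruling out additive reduction via the formal group, applying Hasse--Weil in the good case, and correctly isolating the hard case $p\mid c_{\mathfrak{l}}$ as the part requiring Oesterl\'e's full argument) is accurate, but is extra material beyond what the paper records.
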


In most cases we consider, the exponents of the ray class groups are $\leq 4$, and so we obtain an excellent bound on $p$ right away. The cases where an exponent is $6$ are discussed above. However, for certain cases, namely when $d = 26,34,35,39,55,82,91,$ or $95$, a ray class group has exponent $8$ (see the table in the appendix) and so we cannot apply Theorem \ref{tors}. Applying Oesterl\'e's bound gives $p \leq (3^4+1)^2= 6724$, which is rather large. Instead we use the following strategy to obtain a better bound on $p$. The idea revolves around the following result, which builds on work of \citep[p.~2]{najturc}.

\begin{theorem}\label{mult} Let $E'$ be an elliptic curve defined over any quadratic field $K'$. Let $p$ be a prime, and suppose $\overline{\rho}_{E',p}$ is reducible. Let $q>5$, $q \neq p$ be a rational prime that does not split in $K'$, such that the unique prime of $K'$ above $q$ is of multiplicative reduction for $E'$. Then $p \leq 19$ or $p = 37$.
\end{theorem}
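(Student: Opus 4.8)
The plan is to convert the reducibility of $\overline{\rho}_{E',p}$ into a non-cuspidal point on $X_0(p)$ rational over $K'$, and then to use the multiplicative reduction at the \emph{inert} prime $q$ to pin down where that point specialises. If $\overline{\rho}_{E',p}$ is reducible then $E'[p]$ contains a $G_{K'}$-stable line, so $E'$ has a $K'$-rational cyclic $p$-isogeny with kernel $C$, and $P=(E',C)$ is a non-cuspidal point of $X_0(p)(K')$. Write $\mathfrak{q}$ for the unique prime of $K'$ above $q$; since $q\neq p$ the curve $X_0(p)$ has good reduction at $\mathfrak{q}$, the two cusps $0,\infty$ are $\mathbb{Q}$-rational, and $w_p$ interchanges them.

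The first key step is to show $P$ reduces to a cusp modulo $\mathfrak{q}$ and to control which one. Because $E'$ has multiplicative reduction at $\mathfrak{q}$, its Néron model there has toric special fibre, so the moduli interpretation degenerates: in the Deligne--Rapoport compactification, $P$ specialises to one of the two cusps of $X_0(p)_{\mathbb{F}_\mathfrak{q}}$, the precise one being determined by whether $C$ is of multiplicative or of étale type over $K'_\mathfrak{q}$ (Tate curve description, using $\mathfrak{q}\nmid p$). The same applies to the $p$-isogenous curve $E'/C$, and passing from $E'$ to $E'/C$ interchanges the two types, so $w_p(P)$ also reduces to a cusp, namely to the \emph{opposite} one. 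On the other hand, since $q$ is inert the nontrivial $\sigma\in\mathrm{Gal}(K'/\mathbb{Q})$ induces $x\mapsto x^q$ on the residue field $\mathbb{F}_{q^2}$, which fixes the $\mathbb{F}_q$-rational cusps; hence the Galois conjugate $\bar P={}^{\sigma}P$ reduces to the \emph{same} cusp as $P$. Comparing the two, $\bar P\neq w_p(P)$: the point $P$ does not come from a rational point of the twist $X_0^{d'}(p)$, i.e.\ it lies outside the "obvious" family of quadratic points on $X_0(p)$.

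It then remains to use standard facts about $X_0(p)$. If $P$ is in fact $\mathbb{Q}$-rational, Mazur's theorem lists the primes with a non-cuspidal rational point on $X_0(p)$; the large ones ($p\in\{43,67,163\}$) correspond to CM elliptic curves, which have potentially good reduction everywhere and so cannot have multiplicative reduction at $\mathfrak{q}$, while the remaining ones satisfy $p\le 19$ or $p=37$. If $P$ is genuinely quadratic, then with $c$ the common cusp to which $P$ and $\bar P$ reduce, the divisor $(P)+(\bar P)-2(c)$ is $\mathbb{Q}$-rational of degree $0$ and reduces to $0$ modulo $\mathfrak{q}$; pushing it into the winding quotient (equivalently, treating first the primes $p$ for which $J_0(p)(\mathbb{Q})$ is finite and handling the finitely many others by hand), it becomes torsion, hence vanishes there because reduction modulo $\mathfrak{q}$ is injective on prime-to-$q$ torsion. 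One deduces $(P)+(\bar P)\sim 2(c)$, and for $p$ large enough that $X_0(p)$ has genus $\ge 2$ with $\ell(2(c))=1$ (i.e.\ $X_0(p)$ not hyperelliptic with $c$ a Weierstrass point), Riemann--Roch forces $P=\bar P=c$, contradicting non-cuspidality.

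The main obstacle is the bookkeeping over the small primes: for $p\le 71$ the curve $X_0(p)$ can be hyperelliptic or bielliptic, or $J_0(p)$ can have positive rank, so the clean Riemann--Roch/reduction argument breaks down and one must instead invoke the explicit classification of quadratic points on $X_0(p)$, checking in each borderline case that multiplicative reduction at $\mathfrak{q}$ is incompatible with the sporadic (typically CM) points that survive. Pinning down that the argument goes through for \emph{exactly} $p\le 19$ and $p=37$ is where the hypotheses $q>5$, $q\neq p$, and the inertness of $q$ are used to leave enough room, and where the care is needed; the rest is formal.
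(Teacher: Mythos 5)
Your opening steps coincide with the paper's: reducibility yields a non-cuspidal $P\in X_0(p)(K')$, multiplicative reduction at the unique prime $\mathfrak{q}$ above $q$ forces $P$ to specialise to a cusp, and since $\sigma$ induces Frobenius on the residue field (or acts trivially when $q$ ramifies) while the cusps are $\mathbb{Q}$-rational, $P$ and $P^\sigma$ reduce to the \emph{same} cusp, which after applying $w_p$ may be taken to be $\infty$. Your treatment of the hyperelliptic primes is also essentially sound and matches the paper: there $J_0(p)(\mathbb{Q})$ is finite, so the class of $(P)+(P^\sigma)-2(\infty)$ is torsion and killed by injectivity of reduction, and since the hyperelliptic involution is $w_p$, which interchanges the two cusps, $\infty$ is not a Weierstrass point, so $\ell(2(\infty))=1$ and Riemann--Roch forces $P=P^\sigma=\infty$. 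You did not need to retreat to ``explicit classifications of quadratic points'' for these curves (and that retreat would not work anyway, since hyperelliptic $X_0(p)$ has infinitely many quadratic points).

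The genuine gap is in the non-hyperelliptic case, which is not ``finitely many others to handle by hand'' but infinitely many primes. Already for $p=43,53,61$ the quotient $X_0^+(p)$ is an elliptic curve of rank $1$, so $J_0(p)(\mathbb{Q})$ is infinite, and the same happens for infinitely many larger $p$; hence the class of $(P)+(P^\sigma)-2(\infty)$ need not be torsion in $J_0(p)$. The crux is that its vanishing in the winding/Eisenstein quotient $J_e$ does \emph{not} allow you to conclude $(P)+(P^\sigma)\sim 2(\infty)$ on the curve: linear equivalence cannot be pulled back through a proper quotient of the Jacobian, so the Riemann--Roch step never gets off the ground. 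This is exactly the point of Kamienny's method, which the paper uses instead: one shows that the composite $X_0^{(2)}(p)\to J_0(p)\to J_e$ is a \emph{formal immersion} at $(\infty,\infty)$ in characteristic $q$ --- equivalently, that $T_2$ does not act as a scalar on $J_e$ modulo $q$ (proved by Kamienny for $p>61$, and checked from the characteristic polynomials of $T_2$ for $p=43,53,61$; this is precisely where the hypothesis $q>5$ is used) --- and then the rank-zero property of $J_e$ together with the formal immersion forces $(P,P^\sigma)=(\infty,\infty)$ directly, with no linear-equivalence or Riemann--Roch argument. Without this ingredient your proof establishes the theorem only for the primes $p$ with $J_0(p)(\mathbb{Q})$ finite.
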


We prove this result in Section 4.2 once we have discussed modular curves and formal immersions.

We fix the following notation (which will also be used in Section $5$). For $\mathfrak{q}$, a prime of $K$, write $n_\mathfrak{q}$ for the norm of $\mathfrak{q}$, and define  \[ \mathcal{A}_\mathfrak{q} := \{ a \in \mathbb{Z} : \abs{a} \leq 2 \sqrt{n_\mathfrak{q}}, \quad  n_\mathfrak{q}+1 - a \equiv 0 \pmod{4}  \}. \] If $\mathfrak{q}$ is a prime of good reduction for $E$, we know that $a_{\mathfrak{q}}(E) \in \mathcal{A}_\mathfrak{q}$. This follows from the Hasse--Weil bounds and the fact that $E$ has full two-torsion over $K$. We then define, for $a \in \mathcal{A}_\mathfrak{q}$, \[ P_{\mathfrak{q},a} := X^2-aX + n_\mathfrak{q}. \] When $a = a_{\mathfrak{q}}(E)$, this is the characteristic polynomial of Frobenius at $\mathfrak{q}$.

\begin{proposition}\label{goodred} Suppose $\overline{\rho}_{E,p}$ is reducible with $p \geq 17$. Define $B := \mathrm{Norm}(\varepsilon^{12}-1)$. Let $\mathfrak{q}$  be a prime of $K$ above $q$, with $\mathfrak{q} \nmid 2,3,5,p, \mathfrak{m}$, and such that $q$ does not split in $K$. Define $r_\mathfrak{q} = 1$ if $\mathfrak{q}$ is a principal ideal, and $r_\mathfrak{q} =2$ otherwise. Define \[ R_q:= \mathrm{lcm}  \{ \mathrm{Res}(P_{\mathfrak{q},a}(X),X^{12r_\mathfrak{q}}-1) : a \in \mathcal{A}_\mathfrak{q} \}, \] where Res denotes the resultant of the two polynomials. 
Then \[ p \mid \Delta_K \cdot B \cdot R_\mathfrak{q} \quad \text{or} \quad p \in \{17,19,37\}. \]
\end{proposition}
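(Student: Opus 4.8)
The strategy is to exploit the reducibility of $\overline{\rho}_{E,p}$ to pin down the isogeny character $\theta$ as locally as possible, and then force $p$ to divide one of the three listed quantities by evaluating the characteristic relation at Frobenius. Concretely, since $\overline{\rho}_{E,p}$ is reducible we have $\overline{\rho}_{E,p} \sim \left(\begin{smallmatrix} \theta & * \\ 0 & \theta' \end{smallmatrix}\right)$ with $\theta\theta' = \chi_p$. By Lemma \ref{reduc}, at a prime $\mathfrak{q}$ as in the statement (so $\mathfrak{q} \nmid 2,3,5,p,\mathfrak{m}$, and $q$ non-split in $K$) the curve $E$ is semistable, and in fact it is of good reduction at $\mathfrak{q}$ or of multiplicative reduction at $\mathfrak{q}$. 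First I would dispose of the multiplicative case: if $E$ has multiplicative reduction at $\mathfrak{q}$ then Theorem \ref{mult} applies (with $K' = K$, $E' = E$, $q$ non-split), giving $p \leq 19$ or $p = 37$, hence $p \in \{17,19,37\}$ since $p \geq 17$. So from now on I assume $E$ has good reduction at $\mathfrak{q}$.

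Now the key input is that $\theta$ is unramified outside $p$ and the additive primes, which by Lemma \ref{reduc} are the primes above $2$ and $\mathfrak{m}$ (when $r=2$). Since our $\mathfrak{q}$ avoids all of these and avoids $p$, the character $\theta$ is unramified at $\mathfrak{q}$, so $\theta(\mathrm{Frob}_\mathfrak{q})$ makes sense. On the other hand, at a prime of good reduction the semisimplification of $\overline{\rho}_{E,p}|_{G_{K_\mathfrak{q}}}$ is determined by $P_{\mathfrak{q},a}(X) = X^2 - aX + n_\mathfrak{q}$ with $a = a_\mathfrak{q}(E) \pmod p$, whose roots modulo $\varpi$ (a prime of $\overline{\Q}$ above $p$) are precisely $\theta(\mathrm{Frob}_\mathfrak{q})$ and $\theta'(\mathrm{Frob}_\mathfrak{q})$. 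The standard way to control the global character $\theta$ (following Freitas--Siksek, Momose, David, etc.) is to show that $\theta^{12r_\mathfrak{q}}$, or a small power, is a power of the cyclotomic character, so that $\theta(\mathrm{Frob}_\mathfrak{q})^{12 r_\mathfrak{q}}$ is congruent to a power of $n_\mathfrak{q}$; I would then argue that this also forces $\theta'(\mathrm{Frob}_\mathfrak{q})^{12 r_\mathfrak{q}} \equiv 1$ (or a controlled value), so that $\varpi \mid \mathrm{Res}(P_{\mathfrak{q},a}(X), X^{12 r_\mathfrak{q}} - 1)$ for $a = a_\mathfrak{q}(E)$, whence $p \mid R_q$. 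The factors $\Delta_K$ and $B = \mathrm{Norm}(\varepsilon^{12}-1)$ are the degenerate cases: $\Delta_K$ absorbs the primes of bad behaviour in the class field theory (ramification in the Hilbert-type extension cut out by $\theta$, and the prime $2$ entering through $S$), while $B$ handles the possibility that $\mathrm{Frob}_\mathfrak{q}$ maps to something involving the fundamental unit $\varepsilon$ — indeed $\theta^{12}$ being trivial on inertia and on the decomposition group typically forces its value at certain elements to be a root of $\varepsilon^{12}$, and $p \mid B$ is exactly the condition for $\varepsilon^{12} \equiv 1 \pmod \varpi$. So the trichotomy $p \mid \Delta_K \cdot B \cdot R_\mathfrak{q}$ or $p \in \{17,19,37\}$ records: either some unramifiedness/class-field-theory hypothesis fails ($p \mid \Delta_K \cdot B$), or it holds and the resultant condition kicks in ($p \mid R_\mathfrak{q}$), or $E$ was multiplicative at $\mathfrak{q}$.

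The main obstacle is the middle step: controlling $\theta^{12 r_\mathfrak{q}}$. Here one uses that $\theta$ is a character of the ray class group of $K$ of conductor dividing $p \mathcal{N}_\theta$ (times the infinite places), that away from $p$ its restriction to inertia is trivial, and that at $p$ — since $\mathfrak{q} \nmid p$ is irrelevant but the global structure matters — the $12$th power kills the tame part; the factor $r_\mathfrak{q}$ enters because if $\mathfrak{q}$ is non-principal one must pass to $\theta^{r_\mathfrak{q}}$ (equivalently, to the principalisation of $\mathfrak{q}^{r_\mathfrak{q}}$) before one can write $\theta(\mathrm{Frob}_\mathfrak{q})$ in terms of the value of a Hecke character on a principal ideal and hence relate it to $n_\mathfrak{q}$ and $\varepsilon$. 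I would carry this out by following the argument of \citep[p.~11]{realquad} essentially verbatim, checking that the hypotheses $\mathfrak{q} \nmid 2,3,5,\mathfrak{m}$ and $q$ non-split in $K$ are exactly what is needed to avoid the extra bad primes, and that the cases $p = 17,19,37$ are precisely those where the torsion/formal-immersion input of Theorem \ref{mult} (and Theorem \ref{tors}) cannot be pushed further. Everything else — the Hasse--Weil membership $a_\mathfrak{q}(E) \in \mathcal{A}_\mathfrak{q}$, the congruence $n_\mathfrak{q} + 1 - a \equiv 0 \pmod 4$ from full two-torsion, and the passage from ``$\varpi$ divides some resultant'' to ``$p$ divides the lcm $R_q$'' — is routine.
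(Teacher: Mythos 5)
Your proposal is correct and follows essentially the same route as the paper: split according to whether $\mathfrak{q}$ is of good or multiplicative reduction (semistability coming from Lemma \ref{reduc}), handle the multiplicative case by Theorem \ref{mult} to land in $\{17,19,37\}$, and in the good-reduction case invoke the isogeny-character classification of Freitas--Siksek (the paper cites \citep[Theorem~1]{crit}, which is exactly the ``$\theta^{12}$ is trivial or $\chi_p^{12}$ up to the unit and discriminant obstructions $B$ and $\Delta_K$'' statement you sketch) to force $\varpi \mid \mathrm{Res}(P_{\mathfrak{q},a_\mathfrak{q}(E)},X^{12r_\mathfrak{q}}-1)$ and hence $p \mid R_\mathfrak{q}$ via $a_\mathfrak{q}(E) \in \mathcal{A}_\mathfrak{q}$. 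Your unpacking of the $r_\mathfrak{q}$ and $B$ factors is a faithful (indeed more detailed) account of what that cited theorem provides.
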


We can then choose a set of primes $\{ \mathfrak{q}_1, \cdots, \mathfrak{q}_t \},$ and let $R:= \gcd \{ R_{\mathfrak{q}_i} \}$, so that either $p \mid \Delta_K \cdot B \cdot R$ or $p \in \{17,19,37 \}$. For the cases we considered, we found this to give much better results than applying Oesterl\'e's bound for $n=8$.

\begin{proof}
Let $\mathfrak{q}$ be a prime as defined in the proposition. Then by Lemma \ref{reduc}, $\mathfrak{q}$ is a prime of semistable reduction for $E$. Suppose $\mathfrak{q}$ is a prime of good reduction. We then apply the result of \citep[Theorem 1]{crit}: the possible non-constant isogeny characters are $\{12,0 \}$ and $\{ 0,12 \}$, and we obtain $B = \mathrm{Norm}(\epsilon^{12}-1)$. It follows that if $ p \nmid B \cdot \Delta_K$ then \[ p \mid  \mathrm{Res}(P_{\mathfrak{q},a_\mathfrak{q}(E)}(X),X^{12r_\mathfrak{q}}-1).\] We do not know $a_\mathfrak{q}(E)$, but as discussed above, we know it lies in $\mathcal{A}_\mathfrak{q}$, and so $p \mid R_\mathfrak{q}$ as defined in the proposition.

If instead, $\mathfrak{q}$ is a prime of multiplicative reduction for $E$, then we apply Theorem \ref{mult} to conclude that $p = 17, 19,$ or $37$. 
\end{proof}

We will now consider the cases where $p$ is neither coprime to $\mathcal{N}_\theta$ nor $\mathcal{N}_{\theta'}$. In these cases, $p$ must either split or ramify in $K$ (see \citep[p.~247]{semistables}).

\begin{lemma} Suppose $p$ neither coprime to $\mathcal{N}_\theta$ nor $\mathcal{N}_{\theta'}$ and that $p$ ramifies in $K$. Suppose $2$ is not inert in $K$. Let $n$ be the exponent of the ray class group modulo $\sqrt{\mathcal{N}_A}\infty_1 \infty_2$, where $\mathcal{N}_A$ is the additive part of the conductor $\mathcal{N}$. Then $p \mid 2^n-1$.
\end{lemma}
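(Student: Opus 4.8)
I would prove this via the quotient character $\varphi := \theta/\theta' = \theta^2\chi_p^{-1}\colon G_K \to \mathbb{F}_p^\times$: the plan is to show that $\varphi$ is unramified at every finite place of $K$, deduce that $\varphi$ factors through the ray class group $G$ of modulus $\sqrt{\mathcal{N}_A}\infty_1\infty_2$, and then bound $p$ by evaluating $\varphi$ at a Frobenius element above $2$. Away from $p$, $2$ and $\mathfrak{m}$ both $\theta$ and $\theta'$ are unramified, hence so is $\varphi$. At a prime $\mathfrak{q}\mid 2$: since $2$ is not inert, Lemma \ref{reduc} shows $E$ has potentially multiplicative reduction at $\mathfrak{q}$, so over $K_\mathfrak{q}$ it is the quadratic twist, by the (ramified) ``period'' character, of a curve with multiplicative reduction; the semisimplification of $\overline{\rho}_{E,p}|_{G_{K_\mathfrak{q}}}$ is therefore $(\chi_p\lambda)\oplus\lambda$ for a ramified quadratic character $\lambda$, so $\theta|_{I_\mathfrak{q}} = \theta'|_{I_\mathfrak{q}}(=\lambda|_{I_\mathfrak{q}})$ and $\varphi$ is unramified at $\mathfrak{q}$. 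When $r=2$ and $\mathfrak{m}\neq\mathfrak{p}$, the curve $E$ has additive, potentially good reduction at the odd prime $\mathfrak{m}$, with $v_\mathfrak{m}(\mathcal{N})=2$; one checks that the Frey curve cannot have $j_E\in\{0,1728\}$ for a genuine solution over a real quadratic field (these would force a primitive cube root of unity, resp.\ a $p$-th root of $-2$, to lie in $K$), so $E$ becomes good over a ramified \emph{quadratic} extension of $K_\mathfrak{m}$; hence $\theta|_{I_\mathfrak{m}} = \theta'|_{I_\mathfrak{m}}$ and $\varphi$ is unramified at $\mathfrak{m}$.

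The crux is the analysis at $\mathfrak{p}\mid p$. Since $p$ ramifies in $K$ this prime is unique, and the hypotheses $p\mid\mathcal{N}_\theta$ and $p\mid\mathcal{N}_{\theta'}$ say that $\theta$ and $\theta'$ are \emph{both} ramified at $\mathfrak{p}$. Suppose first $\mathfrak{p}\neq\mathfrak{m}$. Then $E$ is semistable at $\mathfrak{p}$ by Lemma \ref{reduc}; using the Tate curve in the multiplicative case and the connected--\'etale sequence in the good ordinary case (good supersingular reduction is impossible here, as it would make $\overline{\rho}_{E,p}|_{G_{K_\mathfrak{p}}}$, hence $\overline{\rho}_{E,p}$, irreducible), one finds $\{\theta|_{I_\mathfrak{p}},\theta'|_{I_\mathfrak{p}}\}=\{\chi_p|_{I_\mathfrak{p}},1\}$, so exactly one of $\theta,\theta'$ is ramified at $\mathfrak{p}$ — contradicting the hypotheses. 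Therefore $\mathfrak{p}=\mathfrak{m}$, and then the $j_E\notin\{0,1728\}$ argument above applies at $\mathfrak{p}$ as well: $E$ is a ramified quadratic twist of a curve with good reduction at $\mathfrak{p}$, so $\theta|_{I_\mathfrak{p}}=\theta'|_{I_\mathfrak{p}}$ and $\varphi$ is unramified at $\mathfrak{p}$.

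Putting this together, $\varphi$ is unramified at all finite places, so its conductor divides $\infty_1\infty_2$ and in particular $\sqrt{\mathcal{N}_A}\infty_1\infty_2$; hence $\varphi$ factors through $G$ and $\varphi^n=1$ with $n=\exp(G)$. Now fix a prime $\mathfrak{q}\mid 2$; as $2$ is not inert, $\mathfrak{q}$ has residue degree $1$ and hence norm $2$. From the description of $\overline{\rho}_{E,p}|_{G_{K_\mathfrak{q}}}$ in the first paragraph, $\varphi|_{G_{K_\mathfrak{q}}}=\chi_p^{\pm 1}|_{G_{K_\mathfrak{q}}}$, which is unramified (as $\mathfrak{q}\nmid p$) with $\varphi(\mathrm{Frob}_\mathfrak{q})=2^{\pm 1}$. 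Combining with $\varphi^n=1$ gives $2^{\pm n}\equiv 1\pmod{p}$, that is $p\mid 2^n-1$.

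The main obstacle is the step at $\mathfrak{p}\mid p$: one has to determine the isogeny characters on inertia at the ramified prime above $p$ precisely enough to see that the hypotheses are incompatible with $E$ being semistable there, which forces the potentially good case $\mathfrak{p}=\mathfrak{m}$ in which $\theta$ and $\theta'$ agree on $I_\mathfrak{p}$. Verifying $j_E\notin\{0,1728\}$ for a genuine solution, and keeping track of the split/non-split distinction at $2$ (this is why $\lambda$ may incorporate the unramified quadratic twist of the Tate curve), are the remaining routine points.
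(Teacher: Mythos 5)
Your overall strategy --- pass to $\varphi=\theta/\theta'=\theta^2\chi_p^{-1}$, show its conductor divides $\sqrt{\mathcal{N}_A}\,\infty_1\infty_2$ so that it factors through $G$ and has order dividing $n$, then evaluate at a Frobenius above $2$, where potential multiplicative reduction forces $\varphi|_{D_\mathfrak{q}}=\chi_p^{\pm1}$ and hence $\varphi(\mathrm{Frob}_\mathfrak{q})=2^{\pm1}$ --- is exactly the argument the paper invokes (it defers to \citep[Proposition 6.2]{realquad}). However, your analysis at the ramified prime $\mathfrak{p}$ above $p$, which you yourself identify as the crux, contains a genuine error. You assert that good supersingular reduction at $\mathfrak{p}$ would force $\overline{\rho}_{E,p}|_{G_{K_\mathfrak{p}}}$ to be irreducible. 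That is true when $e(\mathfrak{p}/p)=1$, but here $p$ ramifies in $K$, so $e=2$, and the claim fails: once $e\geq 2$, Raynaud's bounds permit a biconnected order-$p$ subgroup scheme, and the hypotheses of the lemma (both $\theta$ and $\theta'$ ramified at $\mathfrak{p}$) force precisely this supersingular configuration, with $\theta|_{I_\mathfrak{p}}=\theta'|_{I_\mathfrak{p}}=\lambda$, the level-one fundamental character: the inertia exponents must sum to $e=2$, and the options $\{0,2\}$ realised by multiplicative or good ordinary reduction are excluded by hypothesis, leaving only $\{1,1\}$. Your argument instead concludes ``therefore $\mathfrak{p}=\mathfrak{m}$'', which never happens ($\mathfrak{m}$ is a fixed odd prime of norm $3$, $5$ or $7$, while $p\geq 17$); read literally, your proof shows the hypotheses of the lemma are vacuous, which they are not --- the supersingular case at the ramified prime is the whole reason the lemma exists, and is why the paper records that $p$ must split or ramify in $K$ in this situation.

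The good news is that the correct supersingular computation delivers exactly what you wanted, namely $\theta|_{I_\mathfrak{p}}=\theta'|_{I_\mathfrak{p}}$, so $\varphi$ is unramified at $\mathfrak{p}$ and the remainder of your argument (which coincides with the paper's) goes through; this local analysis is the content of \citep[Proposition 6.2]{realquad}. Two smaller points. First, you do not need $\varphi$ to be unramified at $\mathfrak{m}$ (nor the $j_E\notin\{0,1728\}$ digression): the modulus already contains $\sqrt{\mathcal{N}_A}$, and since $v_\mathfrak{m}(\mathcal{N}_\theta)=v_\mathfrak{m}(\mathcal{N}_{\theta'})=v_\mathfrak{m}(\mathcal{N})/2=v_\mathfrak{m}(\sqrt{\mathcal{N}_A})$, the character $\varphi$ automatically has conductor dividing the modulus and factors through $G$. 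Second, your assertion that $E$ has potentially good reduction at $\mathfrak{m}$ is unjustified (this depends on the sign of $v_\mathfrak{m}(j)$, which is not determined a priori), though in the potentially multiplicative case $\varphi|_{D_\mathfrak{m}}=\chi_p^{\pm1}$ is again unramified, so nothing is lost.
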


\begin{proof} As $2$ is not inert in $K$, any prime dividing $2$ is a prime of potentially multiplicative reduction for $E$ (Lemma \ref{reduc}). We then follow the argument of \citep[p.~10]{realquad} and apply \citep[Proposition 6.2]{realquad}, which is stated for $K=\Q(\sqrt{p})$ but also holds for $K=\Q(\sqrt{d})$ with $p$ ramifying in $K$.
\end{proof}

\begin{lemma}[{\citep[p.~11]{realquad}}]  Suppose $p$ neither coprime to $\mathcal{N}_\theta$ nor $\mathcal{N}_{\theta'}$ and that $p$ splits in $K$. Set $n=6$ if $2$ is inert in $K$, and set $n=2$ otherwise. Then $p \mid \mathrm{Norm}(\varepsilon^n-1)$ if $\varepsilon$ or $-\varepsilon$ is totally positive, and otherwise  $p \mid \mathrm{Norm}(\varepsilon^{2n}-1)$.
\end{lemma}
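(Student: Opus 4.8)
The plan is to follow the argument of \citep[p.~11]{realquad}. Write $p\mathcal{O}_K = \mathfrak{p}\,\mathfrak{p}^{\sigma}$ with $\mathfrak{p} \neq \mathfrak{p}^{\sigma}$, where $\sigma$ is the generator of $\Gal(K/\Q)$, and recall that reducibility of $\overline{\rho}_{E,p}$ furnishes isogeny characters $\theta, \theta' \colon G_K \to \mathbb{F}_p^{\times}$ with $\theta \theta' = \chi_p$, unramified outside $p$ and the additive primes of $E$ (the primes above $2$, and $\mathfrak{m}$ when $r = 2$). First I would determine the ramification of $\theta$ at the primes above $p$. Good supersingular reduction at a prime above $p$ is incompatible with reducibility, so $\overline{\rho}_{E,p}$ restricted to inertia at $\mathfrak{p}$, and at $\mathfrak{p}^{\sigma}$, has semisimplification $\chi_p \oplus 1$; hence exactly one of $\theta, \theta'$ is ramified at $\mathfrak{p}$ and exactly one at $\mathfrak{p}^{\sigma}$. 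The same one of $\theta, \theta'$ cannot be ramified at both $\mathfrak{p}$ and $\mathfrak{p}^{\sigma}$, for then the other would be unramified at both primes above $p$, contradicting that $p$ divides both $\mathcal{N}_{\theta}$ and $\mathcal{N}_{\theta'}$. So, after replacing $E$ by an isogenous curve if necessary, I may assume $\theta$ is ramified at $\mathfrak{p}$ and unramified at $\mathfrak{p}^{\sigma}$. Since $p$ splits we have $K_{\mathfrak{p}} = \Q_p$, and therefore $\theta|_{I_{\mathfrak{p}}} = \chi_p|_{I_{\mathfrak{p}}}$ is the full mod-$p$ cyclotomic character of $\Q_p$.

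Secondly I would bound the order of $\theta$ on inertia at the additive primes. If $2$ is inert in $K$ then $E$ has additive, potentially good, reduction at the unique prime above $2$, and the explicit invariants $c_4, c_6, \Delta$ of the Frey curve force the semistability defect at that prime to divide $6$; if $2$ splits or ramifies then $E$ has potentially multiplicative reduction at the primes above $2$ (Lemma~\ref{reduc}), so it is a quadratic twist of a Tate curve there and $\theta$ has order dividing $2$ on the corresponding inertia groups. In either case, with $n$ as in the statement, $\theta|_{I_v}$ has order dividing $n$ at every additive prime $v$ (one checks separately that $\mathfrak{m}$, when present, contributes nothing worse, or that $r = 1$ in the cases at hand).

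For the last step I would regard $\theta$ as a finite-order Hecke character of $K$ and impose the product formula $\prod_v \theta_v(u) = 1$ on units $u \in \mathcal{O}_K^{\times}$. Away from $\mathfrak{p}$, the additive primes and the two real places, every local factor is trivial; at an additive prime the factor is a root of unity of order dividing $n$; at $\mathfrak{p}$ it is $(u \bmod \mathfrak{p})^{\pm 1} \in \mathbb{F}_p^{\times}$ by local class field theory, since $\theta|_{I_{\mathfrak{p}}} = \chi_p|_{I_{\mathfrak{p}}}$; and at $\infty_1, \infty_2$ it is a sign of $u$. Putting $u = \varepsilon$ and tracking these factors — equivalently, using that $\theta$, or an appropriate power of it, factors through a ray class group of $K$ in which the image of the global units $\langle -1, \varepsilon \rangle$ is trivial — one deduces that $\varepsilon^{n} \equiv 1 \pmod{\mathfrak{p}}$ when $\varepsilon$ or $-\varepsilon$ is totally positive, because then the archimedean factors can be made to vanish, and $\varepsilon^{2n} \equiv 1 \pmod{\mathfrak{p}}$ in general, using that $\varepsilon^{2}$ is totally positive. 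Taking norms to $\Q$ yields $p \mid \mathrm{Norm}(\varepsilon^{n}-1)$, respectively $p \mid \mathrm{Norm}(\varepsilon^{2n}-1)$, as required.

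I expect the main obstacle to be the local analysis at the prime(s) above $2$ in the second step: one cannot use the generic residue-characteristic-$2$ bounds on the semistability defect and must instead exploit the explicit shape of the Frey curve to pin it down to $n$, which is exactly where the distinction between the inert case ($n = 6$) and the split or ramified cases ($n = 2$) originates. A secondary delicate point is the treatment of the two real places in the last step, since it is the signs of the fundamental unit $\varepsilon$ that govern whether one lands on $\mathrm{Norm}(\varepsilon^{n}-1)$ or only on $\mathrm{Norm}(\varepsilon^{2n}-1)$.
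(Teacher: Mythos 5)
Your proposal is correct and follows essentially the same route as the proof this paper is pointing to: the lemma is quoted verbatim from Freitas--Siksek \citep[p.~11]{realquad} with no proof given here, and your three-step reconstruction (forcing $\theta$ and $\theta'$ to be ramified at complementary primes above the split prime $p$ with $\theta|_{I_{\mathfrak{p}}}=\chi_p|_{I_{\mathfrak{p}}}$, bounding the order of $\theta$ on inertia at the additive primes by $n$, then applying the product formula to the fundamental unit $\varepsilon$) is exactly their argument. The one ingredient you defer --- pinning the semistability defect at the inert prime above $2$ down to a divisor of $6$ via the explicit $c_4$, $c_6$, $\Delta$ of the Frey curve, and noting that $\mathfrak{m}$ only ever contributes a quadratic twist --- is precisely the local computation carried out in the cited reference, so this is an accurate account of the intended proof rather than a new one.
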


Combining the results stated in this section reduces the problem to dealing with a finite number of primes in each case. These are shown in Table 1. We have not included the primes $17$ and $19$ in the table, as we will see at the start of Section 4 that $\overline{\rho}_{E,p}$ is irreducible over all real quadratic fields when $p=17$ or $19$.

\begingroup
\begin{table}[ht!]\label{Tab3}
\begin{center}
\notsotiny
\begin{tabular}{|c||c|c|c|c|}
\hline 
$d$ & $26$ & $29$ & $34$ & $35$  \\
\hline
$p \geq 23$ & $37, 101, 103$  & $29$ & $23, 37, 59, 71, 83$ & $37,47,61,97$  \\
\hline
\hline 
$d$  & $37$ & $39$ & $53$ & $55$  \\ \hline $p \geq 23$  & $37$&  $37, 227$ & $53$ & $37,59, 89, 179, 2437$ \\ 
\hline
\hline
$d$  &  $59$ & $61$ & $69$ & $71$  \\
\hline
$p \geq 23$ & $23$ & $61,127$ & $23$ & $59$   \\ \hline
\hline 
$d$ & $73$ & $74$ & $82$ & $89$    \\ \hline $p \geq 23$ & $89$ & $43$ & $37,41, 109$ & $53$    \\
\hline
\hline 
$d$  & $91$ & $93$ & $94$  & $95$ \\ \hline
$ p \geq 23$ & $37, 47, 67, 787, 1049$ & $31$ & $151$ & $31,37,61,79,97,2027$ \\ 
\hline
\hline 
$d$ & $97$ & & & \\ \hline
$ p \geq 23$ & $467$ & & & \\ 
\hline

\end{tabular}
\caption{Irreducibility Step 1}
\end{center}
\end{table}
\endgroup
\normalsize

\section{Irreducibility II: Modular Curves}

In the previous section we saw how to go from proving irreducibility for all primes $p \geq 17$ to a finite (and often empty) subset of primes (see Table 1). In this section, we study quadratic points on certain modular curves $X_0(N)$ to complete the proof of Theorem \ref{irred}.

\subsection{Quadratic Points on Modular Curves}

Suppose $\overline{\rho}_{E,p}$ is reducible for $p = 17,19$, or $p \geq 23$ appearing in Table 1. As $E$ has full two-torsion over $K$, it gives rise to a non-cuspidal $K$-point on the modular curves $X_0(p)$, $X_0(2p)$, and $X_0(4p)$. It is therefore enough to show that one of $X_0(p)(K)$, $X_0(2p)(K)$, or $X_0(4p)(K)$ has no points that could arise from $E$. We write $g(X_0(N))$ for the genus of $X_0(N)$. We write $X_0^{(2)}(N)$ for the symmetric square of $X_0(N)$, and denote its points by pairs: $(y,z)$. A pair of quadratic points on $X_0(N)$ corresponds to a rational point on $X_0^{(2)}(N)$, and we will use this point of view in Section 4.2.

Recent works \citep{ozmansiksek,hyperquad,joshaquad} have studied quadratic points on $X_0(N)$ of genus $2 \leq g \leq 5$, as well as the genus $6$ hyperelliptic curve $X_0(71)$. Here, all quadratic points are considered, rather than working over a fixed quadratic field as we wish to do. We note that extending these results to (non-hyperelliptic) curves of genus $\geq 6$ quickly becomes computationally impractical, and so we do not seek to do this here. There are two basic cases: either $X_0(N)$ has finitely many quadratic points or infinitely many quadratic points. A curve of genus $\geq 2$ will have infinitely many quadratic points if and only if it is hyperelliptic, or bielliptic with bielliptic quotient an elliptic curve of rank $\geq 1$ (see \citep[p.~352]{HandS}). Pulling back points on the quotient gives rise to infinitely many quadratic points on the original curve. We call these points \emph{non-exceptional}, and points which do not arise in this way are said to be \emph{exceptional}. 

\begin{theorem}[Ogg {\citep[p.~451]{Ogg}}, Bars {\citep[p.~11]{biellipticbars}}]\label{oggbars} The curve $X_0(N)$ is hyperelliptic of genus $\geq 2$ if and only if $N \in \{ 22,23,26,28,29,30,31,33,35,37, \\ 39, 40,41, 46, 47,48,50,59,71 \}$. Furthermore, the hyperelliptic involution is of Atkin--Lehner type, unless $N=37,40,$ or $48$. 

The curve $X_0(N)$ is bielliptic with an elliptic quotient of positive rank if and only if $N \in \{37, 43, 53, 61, 65, 79,  83, 89, 101, 131\}$. 
\end{theorem}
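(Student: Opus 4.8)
The plan is to reprove both classifications in the spirit of Ogg and of Bars: play the genus of $X_0(N)$, which grows linearly in $N$, against the number of fixed points of an involution, which grows like $N^{1/2+\varepsilon}$, so that only finitely many levels survive, and then settle those by explicit computation.

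\textbf{Hyperelliptic curves.} Suppose $X_0(N)$ is hyperelliptic of genus $g\geq 2$ and let $\iota$ be its hyperelliptic involution; since a curve of genus $\geq 2$ carries at most one hyperelliptic involution, $\iota$ is unique, hence lies in the centre of $\mathrm{Aut}(X_0(N))$. By the determination of $\mathrm{Aut}(X_0(N))$ due to Kenku and Momose — it equals the Atkin--Lehner group $B(N)$ for every level of genus $\geq 2$ outside a short explicit list, the only relevant member of which turns out to be $N=37$ — the involution $\iota$ is an Atkin--Lehner involution $w_Q$ for some Hall divisor $Q$ of $N$, unless $N$ lies in that list and is handled directly. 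For $\iota=w_Q$, Riemann--Hurwitz for the double cover $X_0(N)\to X_0(N)/w_Q$ gives
\[ g\bigl(X_0(N)/w_Q\bigr) \;=\; \frac{2g(X_0(N)) + 2 - \nu(w_Q)}{4}, \]
where $\nu(w_Q)$, the number of fixed points of $w_Q$ on $X_0(N)$, is given by Ogg's class-number formula (a short sum of Hurwitz class numbers such as $h(-4Q)$ and $h(-Q)$, with local factors at the primes dividing $N/Q$, the exact shape depending on the residue of $N/Q$ modulo $4$). Hyperellipticity forces the left-hand side to vanish, i.e.\ $\nu(w_Q)=2g(X_0(N))+2$; combining the lower bound $g(X_0(N))\gg N$ with the estimate $\nu(w_Q)\ll_\varepsilon N^{1/2+\varepsilon}$ coming from $h(-D)\ll_\varepsilon D^{1/2+\varepsilon}$ bounds $N$ effectively, and one then checks each surviving pair $(N,Q)$ against the genus formula. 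It remains to treat the levels $N=37,40,48$ where the hyperelliptic involution is \emph{not} of Atkin--Lehner type: these are finitely many small levels, verified directly (for instance at $N=37$ one has $g(X_0(37))=2$, so the curve is hyperelliptic, but $w_{37}$ is not the hyperelliptic involution since $X_0(37)/w_{37}$ has genus $1$).

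\textbf{Bielliptic curves.} The argument is parallel. If $X_0(N)$ is bielliptic, fix a bielliptic involution $u$, so $X_0(N)/u$ has genus $1$. There are only finitely many involutions in $\mathrm{Aut}(X_0(N))$, and, again appealing to Kenku--Momose together with Bars' analysis, each bielliptic involution is of Atkin--Lehner type except at a short explicit list of levels treated separately. For $u=w_Q$ the quotient-genus identity above with left-hand side equal to $1$ reads $\nu(w_Q)=2g(X_0(N))-2$, and the same genus-versus-fixed-point estimate bounds $N$; checking the surviving pairs $(N,Q)$ determines exactly which $X_0(N)$ are bielliptic.

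\textbf{Positive rank, and the main obstacle.} For each level at which $X_0(N)$ is bielliptic, the genus-$1$ quotient $C=X_0(N)/w_Q$ is an elliptic curve that is a quotient of $J_0(N)$, hence corresponds to a Galois orbit of weight-$2$ newforms of level dividing $N$; since its analytic rank is $\leq 1$ for these small conductors, the work of Gross--Zagier and Kolyvagin shows its Mordell--Weil rank equals that analytic rank, which is read off from modular symbols (equivalently, from Cremona's tables). Keeping the levels for which this rank is $\geq 1$ yields $\{37,43,53,61,65,79,83,89,101,131\}$. The genuine difficulty is not conceptual but bookkeeping: pinning down $\mathrm{Aut}(X_0(N))$, getting Ogg's fixed-point formula for $\nu(w_Q)$ exactly right across all congruence cases (including the anomalous behaviour at small prime powers $Q$), and then carefully enumerating and checking every level below the resulting bound — which is why we simply cite Ogg and Bars rather than reproduce the case analysis here.
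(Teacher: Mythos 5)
This statement is quoted in the paper as a known result of Ogg and Bars; the paper supplies no proof of its own, so there is nothing internal to compare your sketch against. Your outline is the standard ``Riemann--Hurwitz versus fixed-point count'' strategy and is essentially the route Bars takes for the bielliptic half (Kenku--Momose for $\mathrm{Aut}(X_0(N))$, Ogg's class-number formula for $\nu(w_Q)$, a gonality/genus bound to truncate the level, then Cremona's tables for the ranks), and the quotient-genus identity and the reductions $\nu(w_Q)=2g+2$ (hyperelliptic) and $\nu(w_Q)=2g-2$ (bielliptic) are correct. Two points deserve flagging, though. First, your parenthetical that $N=37$ is ``the only relevant member'' of the exceptional list contradicts both the theorem and your own closing paragraph: at $N=40$ and $N=48$ the hyperelliptic involution is likewise not of Atkin--Lehner type (it comes from the extra normalizer elements such as $z\mapsto z+\tfrac12$ that exist because $4\mid N$), so $\mathrm{Aut}(X_0(N))\supsetneq B(N)$ there as well; an argument that only excepts $N=37$ would wrongly conclude that the hyperelliptic involutions of $X_0(40)$ and $X_0(48)$ are Atkin--Lehner. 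Second, invoking Kenku--Momose to prove Ogg's classification is at risk of circularity, since their determination of $\mathrm{Aut}(X_0(N))$ itself uses the knowledge of which $X_0(N)$ are hyperelliptic; Ogg's actual argument is different and self-contained --- he bounds $N$ by counting supersingular points of $X_0(N)$ over $\mathbb{F}_{p^2}$ for small $p\nmid N$ (a hyperelliptic curve over $\mathbb{F}_{p^2}$ has at most $2(p^2+1)$ rational points, while the supersingular locus contributes on the order of $(p-1)\mu(N)/12$ points), and then checks the finitely many surviving levels directly. If you want a clean write-up, use Ogg's point-count bound for the hyperelliptic part and reserve the $\mathrm{Aut}$/fixed-point machinery for the bielliptic part, where it is not circular.
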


We note that the curve $X_0(37)$ is both hyperelliptic and bielliptic, with an elliptic quotient of positive rank. Consequently, we do not use the terms exceptional and non-exceptional for quadratic points on this curve. For $X_0(N)$ non-hyperelliptic, the degree $2$ elliptic quotients of $X_0(N)$ with infinitely many rational points are all of the form $X_0^+(N) = X_0(N)/ \langle w_N \rangle$, where $w_N$ denotes the Atkin--Lehner involution corresponding to $N$. The papers \citep{joshaquad, ozmansiksek} classify all quadratic points in the cases where there are finitely many such points and $ 2 \leq g \leq 5 $. The values $N$ with $N$ of the form $p, 2p,$ or $4p$ with $p \geq 17$ in this list are $N = 34$, $38$, and $67$. We also make use of the classification of quadratic points on $X_0(62)$ given in \citep[p.~5]{NajVuk}. In these cases, all quadratic points are defined over imaginary quadratic fields, and so $\overline{\rho}_{E,p}$ is irreducible for $p \in \{17,19,31,67\}$ over all real quadratic fields. 

When the curve $X_0(N)$ has infinitely many points and $2 \leq g \leq 5$, or $X_0(N)$ is hyperelliptic, the papers \citep{joshaquad, hyperquad} obtain a classification of its exceptional quadratic points. In each case, no exceptional points could arise from $E$: they are either defined over imaginary quadratic fields or over real quadratic fields not appearing in Table 1. 

\begin{lemma}
Suppose $X_0(N)$ is hyperelliptic with $N \neq 37$, and let $P \in X_0(N)(\Q(\sqrt{d}))$ be a non-exceptional quadratic point. Then $P$ corresponds to a rational point on the quadratic twist of $X_0(N)$ by $d$, which we denote by $X_0^d(N)$. 
\end{lemma}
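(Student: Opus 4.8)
The plan is to work with an explicit hyperelliptic model of $X_0(N)$ over $\Q$ and translate the twist directly into coordinates. The starting point is that, since $X_0(N)$ has genus $g \ge 2$, its hyperelliptic involution $\iota$ is canonical and hence defined over $\Q$; by Theorem~\ref{oggbars} and the hypothesis $N \ne 37$ it is in fact an Atkin--Lehner involution, so that $X_0^d(N)$ --- the quadratic twist of $X_0(N)$ by $d$ --- is the twist taken with respect to $\iota$, and the quotient $X_0(N)/\iota$ is a genus-zero curve with a rational cusp, hence $\cong \mathbb{P}^1_\Q$. I would fix a model $y^2 = f(x)$, with $f \in \Q[x]$ of degree $2g+1$ or $2g+2$, for which $\iota$ is $(x,y)\mapsto (x,-y)$ and the quotient map $X_0(N)\to X_0(N)/\iota = \mathbb{P}^1$ is $(x,y)\mapsto x$. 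In these coordinates $X_0^d(N)$ is the curve $d\,y^2 = f(x)$, and $\phi\colon (x,y)\mapsto(x,\sqrt d\,y)$ is an isomorphism $X_0^d(N)\to X_0(N)$ defined over $\Q(\sqrt d)$ satisfying $\sigma\circ\phi = \iota\circ\phi\circ\sigma$, where $\sigma$ is the nontrivial element of $\Gal(\Q(\sqrt d)/\Q)$. In particular $\phi$ identifies $X_0^d(N)(\Q)$ with $\{\,P\in X_0(N)(\Q(\sqrt d)) : \iota(P)=\sigma(P)\,\}$.

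Next I would unwind the definition of \emph{non-exceptional}: for a hyperelliptic curve, $P$ is non-exceptional exactly when it lies in the preimage under the quotient map of a rational point of $X_0(N)/\iota=\mathbb{P}^1$, i.e.\ exactly when the image $x_0$ of $P$ in $\mathbb{P}^1$ is $\Q$-rational. So I write $P=(x_0,y_0)$ with $x_0\in\Q$ and $y_0\in\Q(\sqrt d)$, whence $y_0^2 = f(x_0)\in\Q$. Since $P$ is a genuine quadratic point we have $y_0\notin\Q$, and then $y_0^2\in\Q$ together with $\Q(y_0)=\Q(\sqrt d)$ forces $y_0 = v\sqrt d$ for some $v\in\Q^\times$. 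Hence $d v^2 = f(x_0)$, so $Q:=(x_0,v)$ is a rational point of $X_0^d(N)$, and $\phi(Q)=(x_0, v\sqrt d) = P$. Thus $P$ corresponds to the rational point $Q\in X_0^d(N)(\Q)$, as required. (Equivalently: for non-exceptional $P$ the $\iota$-fibre through $P$ is a $\Q$-rational divisor of degree $2$, so it must contain $\sigma(P)$, giving $\sigma(P)=\iota(P)$, and one concludes via the identification of the previous paragraph.)

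I do not anticipate a genuine obstacle --- the argument is essentially the bookkeeping of a quadratic twist --- so the write-up mainly needs care on two points. First, pinning down that the involution defining $X_0^d(N)$ is the $\Q$-rational hyperelliptic involution, which is precisely where the exclusion $N\ne 37$ (via Theorem~\ref{oggbars}) enters. Second, treating the points invisible in the affine chart: when $\deg f = 2g+2$ the two points at infinity are interchanged by $\iota$ and have $\Q$-rational image on $\mathbb{P}^1$, and the same computation in the chart at infinity matches them with rational points at infinity of $X_0^d(N)$; meanwhile a genuinely quadratic Weierstrass point satisfies $\iota(P)=P\ne\sigma(P)$ and so is exceptional, falling outside the statement. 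Everything else reduces to the elementary fact that an element of $\Q(\sqrt d)\setminus\Q$ whose square is rational is a rational multiple of $\sqrt d$.
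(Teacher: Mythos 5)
Your proof is correct and follows essentially the same route as the paper: fix a model $y^2=f(x)$ with the hyperelliptic involution $(x,y)\mapsto(x,-y)$, use the non-exceptionality of $P$ to get a rational $x$-coordinate, deduce $y=v\sqrt{d}$ with $v\in\Q$, and read off the rational point $dv^2=f(x)$ on the twist. The extra care you take (identifying the twisting involution with the Atkin--Lehner/hyperelliptic involution via $N\neq 37$, and handling points at infinity and Weierstrass points) is consistent with, and slightly more thorough than, the paper's brief argument.
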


\begin{proof} We can take a model for the hyperelliptic curve $X_0(N)$ of the form $y^2 = f(x)$, with the hyperelliptic involution given by $(x,y) \mapsto (x,-y)$. If  $P = (u,v) \in X_0(N)(\Q(\sqrt{d}))$ is a non-exceptional quadratic point, then $(u,v)=(u^\sigma,-v^\sigma)$, where $\sigma$ generates $\mathrm{Gal}(\Q(\sqrt{d})/\Q)$. So $u \in \Q$ and $v= b\sqrt{d}$ with $b \in \Q$. So $db^2 = f(u)$, so $(u,b) \in X_0^d(N)(\Q)$.
\end{proof}

It follows that if $X_0^d(N)(\Q) = \emptyset$, then we have a contradiction. This is often easily checked by seeing whether or not we have points everywhere locally; we see this in Example \ref{extwist} below. Even if $X_0^d(N)$ has points everywhere locally, in some cases we can prove that $X_0^d(N)(\Q) = \emptyset$ using the \texttt{TwoCoverDescent} function in \texttt{Magma}. We do this, for example, in the case $d = 71$ and $p = 59$. This idea of twisting is taken further in \citep{ozmantwist}: for $X_0(N)$, possibly non-hyperelliptic, we denote by $X_0^d(N)$ the twist of $X_0(N)$ by $w_N$ over $\Q(\sqrt{d})$. As mentioned in the introduction, rational points on $X_0^d(N)$ correspond to pairs of Galois conjugate quadratic points on $X_0(N)(\Q(\sqrt{d}))$ that map to rational points in the quotient $X_0^+(N)$. We then have the following result. The full theorem is stronger than the version we present here.

\begin{theorem}[Ozman, {\citep[p.~2]{ozmantwist}}]\label{twistthm} Let $N>0$ be squarefree,  $K' = \Q(\sqrt{d'})$ a quadratic field, and let $l \nmid N$ be a prime that is ramified $K'$. Suppose there exists a prime, $\mathfrak{l}$, of $M:=\Q(\sqrt{-N})$ above $l$, which is not a principal ideal. Then  $X_0^{d'}(N)(\Q_l) = \emptyset$, so $X_0^{d'}(N)(\Q) = \emptyset$.
\end{theorem}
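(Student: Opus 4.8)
Since $X_0^{d'}(N)(\Q_l) = \emptyset$ trivially implies $X_0^{d'}(N)(\Q) = \emptyset$, it suffices to argue locally at $l$. As $l$ is ramified in $K'$, the completion $K' \otimes_\Q \Q_l$ is the ramified quadratic extension $L/\Q_l$; write $\sigma$ for the generator of $\mathrm{Gal}(L/\Q_l)$. By the moduli description of the twist recalled before the statement, a $\Q_l$-point of $X_0^{d'}(N)$ is the same as a point $P \in X_0(N)(L)$ with $P^\sigma = w_N(P)$, where $P = (E,C)$ for an elliptic curve $E/L$ with cyclic $N$-subgroup $C$. Since $l \nmid N$ the curve $X_0(N)$ has good reduction at $l$, and since $L/\Q_l$ is \emph{ramified} its residue field is still $\F_l$; by properness $P$ spreads out to a point of $X_0(N)(\mathcal{O}_L)$ and reduces to a point $\overline{P} \in X_0(N)(\F_l)$ of the smooth special fibre. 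As $\sigma$ acts trivially on the residue field, reducing $P^\sigma = w_N(P)$ gives $\overline{P} = w_N(\overline{P})$: the reduction is a fixed point of $w_N$ on $X_0(N)_{\F_l}$.

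\textbf{Fixed points are CM points for $\Q(\sqrt{-N})$.} The only relevant $N$ have $g(X_0(N)) \geq 2$, in particular $N \geq 5$. Over $\overline{\Q}$ a short matrix computation (realising $w_N$ as $\tau \mapsto -1/(N\tau)$ on $\mathcal{H}$: a fixed point is fixed by some $M \in \mathrm{GL}_2^+(\Q)$ with $\det M = N$ and $N \mid \mathrm{tr}(M)$, forcing $\mathrm{tr}(M) = 0$ once $N \geq 5$) shows that every $w_N$-fixed point is $(\overline{E}_0, \overline{C}_0)$ where $\overline{E}_0$ carries a cyclic $N$-endomorphism $\mu$ with $\mu^2 = -[N]$ and $\ker\mu = \overline{C}_0$; for $l$ odd the degree-two cover $X_0(N) \to X_0^+(N)$ is tamely ramified, so the same holds for $\overline{P}$ over $\F_l$. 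Hence $\Z[\sqrt{-N}] \hookrightarrow \mathrm{End}(\overline{E}_0)$, so $\mathrm{End}(\overline{E}_0)$ is either an order $\mathcal{O}$ in $M = \Q(\sqrt{-N})$ (if $\overline{E}_0$ is ordinary) or an order in the quaternion algebra $B_{l,\infty}$ ramified exactly at $l$ and $\infty$ (if $\overline{E}_0$ is supersingular); and since $N \geq 5$ the CM field forces $j(\overline{E}_0) \neq 0,1728$, so $(\overline{E}_0,\overline{C}_0)$ is genuinely defined over $\F_l$ up to quadratic twist.

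\textbf{The contradiction.} Suppose $\overline{E}_0$ is ordinary. Then $l$ splits in $M$ and is coprime to the conductor of $\mathcal{O}$, so $\mathrm{End}_{\F_l}(\overline{E}_0) = \mathcal{O} \subseteq \mathcal{O}_M$ and $\pi := \mathrm{Frob}_l \in \mathcal{O}$ satisfies $\pi\bar{\pi} = l$; thus $(\pi)$ is a principal prime of norm $l$ in $\mathcal{O}$, and pushing forward (valid as $l \nmid \mathrm{cond}(\mathcal{O})$) one of the primes of $\mathcal{O}_M$ above $l$ is principal. The two primes above a split $l$ are Galois conjugate, hence \emph{both} principal, contradicting the hypothesis on $\mathfrak{l}$. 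Suppose instead $\overline{E}_0$ is supersingular. Then $M = \Q(\sqrt{-N}) \hookrightarrow B_{l,\infty}$, so by the embedding criterion for quadratic fields into quaternion algebras $l$ must be non-split in $M$ (the place $\infty$ is already non-split since $M$ is imaginary). If $l$ is inert in $M$ then $(l)$ is the unique, principal, prime above $l$ --- contradiction. If $l$ ramifies in $M$ then $l \mid \mathrm{disc}(M) \mid 4N$, so, as $l \nmid N$, we are forced to $l = 2$.

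\textbf{The main obstacle.} The above disposes of every odd $l$; the surviving case is $l = 2$, which is genuinely delicate for several reasons: one cannot linearise the order-$2$ automorphism $w_N$ in a formal coordinate at $\overline{P}$ (this needs $2$ invertible), the $w_2$-fixed locus of $X_0(N)$ in characteristic $2$ may be strictly larger than the reduction of the characteristic-zero one, and the $2$-adic behaviour of the double cover $X_0(N) \to X_0^+(N)$ has to be analysed by hand. I expect this $2$-adic analysis --- equivalently, controlling the discriminant class in $\Q_2^\times/(\Q_2^\times)^2$ cut out by the fibres of $X_0(N) \to X_0^+(N)$ over $\Q_2$-points near the supersingular branch points --- to be the main technical difficulty, and it is precisely where the fuller version of the theorem in \citep{ozmantwist} does additional work. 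In the applications in this paper one may always take $l$ odd, so the version stated above suffices.
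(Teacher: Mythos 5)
The paper does not prove this statement; it is quoted directly from Ozman's paper \citep{ozmantwist}, so there is no internal proof to compare against. That said, your reconstruction for odd $l$ is essentially the argument of the cited source and is sound: a $\Q_l$-point of the twist gives $P\in X_0(N)(L)$ with $P^\sigma=w_N(P)$ for $L/\Q_l$ ramified, reduction to the (good, smooth) special fibre yields a $w_N$-fixed point $\overline{P}$ over $\F_l$, the fixed locus of $w_N$ is \'etale over $\Z[1/2N]$ so $\overline{P}$ is the reduction of a characteristic-zero CM point with $\Z[\sqrt{-N}]\hookrightarrow\mathrm{End}$, and then Deuring's dichotomy gives the contradiction (ordinary: $\mathrm{Frob}_l$ generates a principal degree-one prime of $\mathcal{O}_M$ above $l$, and the two primes above a split $l$ are inverse to each other in the class group, so both are principal; supersingular: $M\hookrightarrow B_{l,\infty}$ forces $l$ non-split, and inert $l$ gives the principal prime $(l)$). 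Two small points you gloss over: you should note that the fixed points mod $l$ are non-cuspidal (this follows from potentially good reduction of CM curves, or equivalently from \'etaleness of the fixed locus plus the fact that $w_N$ fixes no cusp for $N>1$ squarefree); and your assertion that $N\geq 5$ forces $j(\overline{E}_0)\neq 0,1728$ is false in the supersingular case (a supersingular $j=0$ or $1728$ curve has quaternionic endomorphism ring containing many quadratic orders) --- but this does not matter, since the supersingular branch only uses the geometric endomorphism ring.

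The genuine gap is the case $l=2$, which you flag but do not close, and which the theorem as stated does not exclude: $l=2$ is allowed whenever $2\nmid N$, $2$ ramifies in $K'$, and a prime of $\Q(\sqrt{-N})$ above $2$ is non-principal. Your tameness/\'etaleness step and the linearisation of the involution both break at $2$, and the supersingular analysis no longer rules out $l$ ramified in $M$ (since $2\mid\mathrm{disc}(M)$ is possible with $2\nmid N$). So as a proof of the stated theorem your argument is incomplete; it proves the theorem only for odd $l$. This restriction is harmless for the applications in this paper (the examples use $l=5$ and $l=13$), but if you want the full statement you must either import the $2$-adic case from \citep{ozmantwist} or add the hypothesis $l$ odd.
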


\begin{remark} The condition `$\mathfrak{l}$ is not principal' in Theorem \ref{twistthm} is equivalently stated in {\citep[p.~18]{ozmantwist}} as `$\mathfrak{l}$ is not totally split in the Hilbert class field of $M$'.
\end{remark}

\begin{example}\label{extwist} We consider the hyperelliptic curve $X_0(31)$. A model for this curve is given by \[ y^2 = x^6 -8x^5 + 6x^4 + 18x^3 - 11x^2 - 14x - 3. \] Let $d=95$. We can check directly that $X_0^d(31)$ has no points over $\Q_5$. Alternatively, we can apply Theorem \ref{twistthm} with $l=5$. The prime $l$ ramifies in $\Q(\sqrt{d})$, and $l \nmid 31$. Write $M=\Q(\sqrt{-31})$ and $H$ for its Hilbert class field, which is a degree $3$ extension of $M$ since $M$ has class number $3$. The prime $l=5$ splits in $\mathcal{O}_M$, and the two primes above $5$ are not principal ideals in $\mathcal{O}_M$ (equivalently, they are not totally split in $H$). It follows that $X_0^d(31)(\Q_5) = \emptyset$. We deduce that for $d=95$, $\overline{\rho}_{E,31}$ is irreducible.
\end{example}

\subsection{Formal Immersions and Relative Symmetric Chabauty}

We start by reviewing some properties of the Jacobian, $J_0(N)$, of the modular curve $X_0(N)$. We then provide a proof of Theorem \ref{mult}, and see how similar ideas can be used to obtain information about $\overline{\rho}_{E,p}$ when the prime $q$ in Theorem \ref{mult} splits in $K'$, in order to prove Theorem \ref{thm4}. We then combine this information with Theorem \ref{twistthm} to prove irreducibility for most values of $p$ and $d$ in Table 1.

Given $N>0$, write $g_1, \dots, g_k$ for representatives of the Galois conjugacy classes of Hecke eigenforms in the space of cuspforms $S_2(N)$. To each $g_i$ is associated a simple abelian variety $A_i / \Q$ and we describe the Galois conjugates of $g_i$ as the \emph{cuspforms attached to} $A_i$. Each $g_i$ arises from a newform at some level $M_i \mid N$. Writing $m_i$ for the number of divisors of $N/M_i$, we have  \[ J_0(N) \sim A_1^{m_1} \times \dots \times A_k^{m_k}, \] where $\sim$ denotes isogeny over $\Q$ (see \citep[p.~3481]{murty}, for example). We see that $\mathrm{Rk}(J_0(N)) = \sum_{i=1}^k m_i \cdot  \mathrm{Rk}(A_i).$ 

We are interested in the ranks of the isogeny factors $A_i$. Write $L(A_i,s)$ for the $L$-function of $A_i$. A theorem of Kolyvagin and Logachev \citep{klthm} asserts that if $L(A_i,1) \neq 0$, then $\mathrm{Rk}(A_i(\Q))=0$, and using \texttt{Magma} we can verify whether or not $L(A_i,1)$ is zero. We define $\mathcal{A}_0 / \Q$ as the largest rank $0$ quotient of $J_0(N)$.

When $N=p$ is prime, we can write \[J_0(p) \sim J_0^+(p) \times J_0^-(p), \] where $J_0^+(p)$ is the Jacobian of the modular curve $X_0^+(p)$, and $J_0^-(p) = J_0(p) / (1+w_p)$. When $p>7$, we denote by $J_e(p)$, or simply $J_e$, the \emph{Eisenstein quotient} of $J_0(p)$, as constructed by Mazur in \citep{eisenstein}. This is a non-trivial factor of $J_0^-(p)$ satisfying $\mathrm{Rk}(J_e(\Q)) = 0$. In fact, $J_e(\Q)$ is cyclic of order $n$, where $n$ is the numerator of $(p-1)/12$.

Before proving Theorem \ref{mult}, we first prove the following lemma.

\begin{lemma}[Formal immersion criterion]\label{FIM} Let $N=p$ or $2p$ such that $g(X_0(N)) \ge 2$. Denote the cusps of $X_0(N)$ by $\infty = c_1,c_2 \dots, c_m$. Let $f_1, \dots, f_t$ be cuspforms attached to $\mathcal{A}_0$. Let $(y,z) \in X^{(2)}_0(N)(\Q)$. Let $q \ne 2,p$ be a rational prime such that $(y,z)_{\mathbb{F}_q} = (c_i,c_j)_{\mathbb{F}_q}$ for some $1 \leq i,j \leq m$. Denote by $a_n(f,c_k)$ the $n$th coefficient of $f$ expanded at the cusp $c_k$, and define matrices \[ F_{\infty}:=\begin{pmatrix}
a_1(f_1,\infty) & a_2(f_1,\infty) \\ \vdots & \vdots \\ a_1(f_t,\infty) & a_2(f_t,\infty)
\end{pmatrix}  ~~ \text{and} ~~ 
F_{\infty,k}:=\begin{pmatrix}
a_1(f_1,\infty) & a_1(f_1,c_k) \\ \vdots & \vdots \\ a_1(f_t,c_1) & a_1(f_t,c_k)
\end{pmatrix} \] for $2 \leq k \leq m$. If $F_\infty$ and $F_{\infty,k}$ all have rank $2$ modulo $q$ then $(y,z) = (c_i,c_j)$.
\end{lemma}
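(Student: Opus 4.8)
The plan is to run the classical Mazur "formal immersion" argument (as in the resolution of the torsion conjecture and in Kamienny's work), but in the relative symmetric-square setting, exactly as used by Najman--Turcas and in the quadratic-points literature. The point $(y,z) \in X_0^{(2)}(N)(\Q)$ reduces modulo $q$ to a pair of cusps $(c_i,c_j)_{\mathbb{F}_q}$; we want to conclude that it already equalled that pair of cusps in characteristic $0$. The obstruction to such a conclusion is precisely measured by a formal immersion property of a certain map at the point $(c_i,c_j)$, and the matrices $F_\infty$, $F_{\infty,k}$ are the explicit incarnations of the differential of that map.

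First I would set up the morphism. Let $\mathcal{A}_0$ be the largest rank-$0$ quotient of $J_0(N)$, so $\mathcal{A}_0(\Q)$ is finite. Compose the Abel--Jacobi embedding $X_0^{(2)}(N) \hookrightarrow J_0(N)$ (using the base point, e.g. $2\infty$ or $c_1+c_1$) with the quotient map $J_0(N) \to \mathcal{A}_0$ to get $\phi : X_0^{(2)}(N) \to \mathcal{A}_0$. Because $\mathcal{A}_0(\Q)$ is finite, the two rational points $(y,z)$ and $(c_i,c_j)$ of $X_0^{(2)}(N)$ that reduce to the same point mod $q$ have the same image under $\phi$ after reduction mod $q$ — hence, since $\mathcal{A}_0(\Q) \hookrightarrow \mathcal{A}_0(\mathbb{F}_q)$ injects on torsion for $q$ of good reduction (using $q \neq 2, p$ so that $X_0(N)$ and $J_0(N)$ have good reduction at $q$, and $q$ odd so the torsion injects), we get $\phi((y,z)) = \phi((c_i,c_j))$ as $\Q$-points. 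Now I would invoke the formal-immersion mechanism: if $\phi$ is a formal immersion at the $\mathbb{F}_q$-point $(c_i,c_j)_{\mathbb{F}_q}$ — i.e. the induced map on completed local rings $\widehat{\mathcal{O}}_{\mathcal{A}_0, 0} \to \widehat{\mathcal{O}}_{X_0^{(2)}(N), (c_i,c_j)}$ over $\mathbb{Z}_{(q)}$ is surjective — then two $\mathbb{Z}_{(q)}$-points of $X_0^{(2)}(N)$ reducing to $(c_i,c_j)_{\mathbb{F}_q}$ and having equal image in $\mathcal{A}_0$ must coincide. Applying this with the two points $(y,z)$ and $(c_i,c_j)$ yields $(y,z) = (c_i,c_j)$, which is the conclusion.

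Next I would translate "formal immersion at $(c_i,c_j)_{\mathbb{F}_q}$" into the rank condition on the stated matrices. The cotangent space of $J_0(N)$, and hence of $\mathcal{A}_0$, is spanned by (the pullbacks of) the weight-$2$ cuspforms attached to $\mathcal{A}_0$, namely $f_1, \dots, f_t$; pulling these back along the Abel--Jacobi map and writing them in terms of local parameters at the two cusps $c_i$ and $c_j$, the differential of $\phi$ is computed by the $q$-expansions $a_n(f_\ell, c_k)$. When $i = j = 1$ (both reductions land at $\infty$), a uniformizer at $(\infty,\infty)$ in the symmetric square is given by the first two power-sum / elementary-symmetric functions in the local parameters, and the matrix of the differential is exactly $F_\infty$ with entries $a_1(f_\ell,\infty)$ and $a_2(f_\ell,\infty)$; formal immersion amounts to $F_\infty$ having rank $2$ mod $q$. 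When the two cusps are distinct, say $\infty$ and $c_k$, the relevant local parameter uses $a_1(f_\ell, \infty)$ in one factor and $a_1(f_\ell, c_k)$ in the other, giving the matrix $F_{\infty,k}$, and again rank $2$ mod $q$ is the formal-immersion condition. Since $(c_i,c_j)$ is one of $(\infty,\infty)$ or $(\infty,c_k)$ (the hypothesis only concerns pairs involving $\infty = c_1$, matching the shape of the matrices), checking that all of $F_\infty, F_{\infty,2}, \dots, F_{\infty,m}$ have rank $2$ mod $q$ covers every possible reduction pattern, and the lemma follows.

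The main obstacle — or rather the point requiring the most care — is the symmetric-square bookkeeping: verifying that the differential of the Abel--Jacobi-composed map at a pair of cusps really is the matrix of first (and, in the coincident case, first two) $q$-expansion coefficients, with the correct local parameters on $X_0^{(2)}(N)$, and that reduction mod $q$ behaves well (good reduction of $X_0(N)$ and its symmetric square at $q \neq 2,p$, the cusps specialising to distinct or coincident $\mathbb{F}_q$-points appropriately, and the integral model of $\mathcal{A}_0$ being an abelian scheme so that $\mathcal{A}_0(\Q)_{\mathrm{tors}} \hookrightarrow \mathcal{A}_0(\mathbb{F}_q)$). All of these are by-now-standard ingredients (Mazur, Kamienny, Derickx--Kamienny--Stein--Stoll, Box, Najman--Turcas), so the proof is a matter of assembling them correctly rather than of overcoming a genuine difficulty; the condition $q \neq 2, p$ is what guarantees the good-reduction inputs, and $N \in \{p, 2p\}$ with $g(X_0(N)) \geq 2$ is what makes the cuspidal subscheme and the quotient $\mathcal{A}_0$ behave as needed.
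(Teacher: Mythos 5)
Your argument follows the same route as the paper's proof: compose the Abel--Jacobi map on $X_0^{(2)}(N)$ with the projection to the rank-zero quotient $\mathcal{A}_0$, use injectivity of torsion under reduction at the odd prime $q$ of good reduction to upgrade the mod-$q$ coincidence into an equality of $\Q$-points of $\mathcal{A}_0$, and then invoke a formal immersion criterion whose differential is computed by the stated matrices of $q$-expansion coefficients. The paper simply cites the criterion of Derickx--Etropolski--van Hoeij--Morrow--Zureick-Brown for that last step, whereas you sketch the symmetric-square local parameters yourself; that part is fine.

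The one genuine gap is your parenthetical claim that ``the hypothesis only concerns pairs involving $\infty = c_1$''. It does not: the lemma allows $(y,z)_{\mathbb{F}_q} = (c_i,c_j)_{\mathbb{F}_q}$ for arbitrary $1 \le i,j \le m$ (for instance $(0,0)_{\mathbb{F}_q}$ when $N=p$), while your matrices only test formal immersion at pairs containing $\infty$. The missing step is the reduction to such pairs: since $N=p$ or $2p$, the cusps of $X_0(N)$ are rational and the group of Atkin--Lehner involutions acts transitively on them, so one may apply a suitable involution $w$ to replace $(y,z)$ by $(w(y),w(z))$, whose reduction is $(\infty,c_k)_{\mathbb{F}_q}$ for some $k$; having shown $(w(y),w(z))=(\infty,c_k)$, applying $w$ again gives the claim for the original pair. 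Without this step the rank conditions on $F_\infty$ and the $F_{\infty,k}$ do not cover every reduction pattern allowed by the hypothesis, and the lemma as stated --- which the paper later applies to points reducing to a priori arbitrary pairs of cusps of $X_0(74)$ --- would not follow.
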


\begin{proof}
Since $N=p$ or $2p$, the cusps of $X_0(N)$ are rational, and the set of Atkin--Lehner involutions acts transitively on the cusps. So we may assume that  $(y,z)_{\mathbb{F}_q} =(\infty,c_k)_{\mathbb{F}_q}$, for some $1 \leq k \leq m$. Conisder the following Abel--Jacobi map: \begin{align*} \iota_k:  X_0^{(2)}(p) & \longrightarrow J_0(p) \\ (u,v) & \longmapsto [u+v-(\infty+c_k)]. 
\end{align*} Let $h:X_0^{(2)}(N) \rightarrow \mathcal{A}_0$ denote the composition of $\iota_k$ with the projection map $J_0(p) \rightarrow \mathcal{A}_0$.
Each cuspform $f_i$ gives rise to an element $\omega_i \in \mathrm{Cot}(\mathcal{A}_0) \hookrightarrow \mathrm{Cot}(J_0(N))$. We then apply the formal immersion criterion as stated in \citep[p.~16]{sporadic} to conclude that $h$ is a formal immersion at $(\infty,c_k)_{\mathbb{F}_q}$, and since $\mathrm{Rk}(\mathcal{A}_0) = 0$, we conclude that $(y,z) = (\infty,c_k)$.
\end{proof}

\begin{proof}[Proof of Theorem \ref{mult}] Suppose $p>19$, so that $g(X_0(p)) \ge 2$. Since $\overline{\rho}_{E',p}$ is reducible, $E'$ gives rise to a non-cuspidal $K'$-point, which we denote $x$, on the modular curve $X_0(p)$. So the pair $(x,x^\sigma)$, is a rational point on $X_0^{(2)}(p)$. Since there is a unique prime of $K'$ above $q$, which is of multiplicative reduction for $E'$, it follows that $(x,x^\sigma)_{\mathbb{F}_q} = (\infty,\infty)_{\mathbb{F}_q}$ or $(0,0)_{\mathbb{F}_q}$. After applying the Atkin--Lehner involution $w_p$ to the pair $(x,x^\sigma)$ if necessary, we may assume that $(x,x^\sigma)_{\mathbb{F}_q} = (\infty,\infty)_{\mathbb{F}_q}$.

We now split into two cases. First, suppose $X_0(p)$ is non-hyperelliptic and denote by $\iota$ the Abel--Jacobi map on $X_0^{(2)}(p)$ with base point $(\infty,\infty)$. We mainly follow the proof of \citep[pp.~4-6]{najturc} which is based on the work of Kamienny in \citep[pp.~223-225]{kamienny}. We show that the matrix $F_\infty$ has rank $2$ modulo $q$, as this implies, by Lemma \ref{FIM}, that $(x,x^\sigma) = (\infty,\infty)$, a contradiction, since $x$ is a non-cuspidal point.
It therefore suffices to find a pair of newforms $f_1 = \sum a_i q^i$ and $f_2 = \sum b_i q^i$ (here $a_1=b_1=1$) attached to $J_e$ such that $a_2 \not\equiv b_2 \pmod{q}$. Equivalently, it is enough to show that the Hecke operator $T_2$ does not act as a scalar on $J_e$ mod $q$. For $p>61$, this is proven in \citep[pp.~224-225]{kamienny} using the properties of $J_e$. For $p=43,53,$ and $61$ (the remaining primes $>19$ with $X_0(p)$ non-hyperelliptic), the characteristic polynomial of $T_2$ on each of the Eisenstein quotients $J_e(p)$ is displayed in \citep[p.~226]{kamienny}. We see that $T_2$ does not act as a scalar modulo any prime $>5$, and so we also have a formal immersion at $(\infty,\infty)_{\mathbb{F}_q}$ in these cases too.

For the second case, we suppose that $p \ne 37$, and that $X_0(p)$ is hyperelliptic. We follow the argument of \citep{kami2}. For these values of $p$, the Jacobian $J_0(p)$ has rank $0$ over $\Q$, and the hyperelliptic involution on $X_0(p)$ is the Atkin--Lehner involution $w_p$ (see Theorem \ref{oggbars}). Since $J_0(p)$ is finite, arguing similarly to above, $\iota(x,x^\sigma) = 0$ in $J_0(p)(\Q)$, so there exists a degree $2$ rational function $g$ on $X_0(p)$ satisfying $\mathrm{div}(g) = x+ x^\sigma - 2 \infty$. As $g$ has degree $2$ and $X_0(p)$ is hyperelliptic, the hyperelliptic involution, $w_p$, must fix $\infty$, a contradiction, since $w_p$ interchanges the two cusps of $X_0(p)$.
\end{proof}

Unfortunately, it does not seem possible to explicitly construct inert primes of multiplicative reduction for the Frey curve $E_{a,b,c,p}$, and so in our situation, this result seems limited to its use in the proof of Proposition \ref{goodred}. We can, however, using Lemma \ref{krauslem} (usually) find split primes $q$, for which both primes of $K$ above $q$ are of multiplicative reduction for $E$. Although results as strong as Theorem \ref{mult} are not possible in this case, we can still extract useful information. The key difference for a split prime is that if we try to follow the argument used in the proof above, $(x,x^\sigma)$ may reduce to $(\infty,0)_{\mathbb{F}_q}$.

\begin{theorem}\label{splitprime} Let $E'$ be an elliptic curve defined over any quadratic field $K'$.  Let $p$ be a prime, and suppose $\overline{\rho}_{E',p}$ is reducible, so that $E'$ gives rise to a point $x \in X_0(p)(K')$. Let $q>5$, $q \neq p$ be a rational prime such that $q$ splits in $K'$, and both primes of $K'$ above $q$ are of multiplicative reduction for $E'$. Then \begin{enumerate}[(i)]
\item either $p \leq 19$ or $p=37$; 
\item or $w_p(x) = x^\sigma$ on $X_0(p)$.
\end{enumerate}
\end{theorem}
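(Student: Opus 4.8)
The plan is to run exactly the argument of Theorem \ref{mult}, but to keep track of the extra possibility that arises when $q$ splits. As before, suppose $p>19$ (the cases $p\le 19$ and $p=37$ being allowed as exceptions in conclusion (i)), so that $g(X_0(p))\ge 2$. Since $\overline{\rho}_{E',p}$ is reducible, $E'$ yields a non-cuspidal point $x\in X_0(p)(K')$, and the pair $(x,x^\sigma)$ is a rational point on $X_0^{(2)}(p)$. Because both primes of $K'$ above $q$ are of multiplicative reduction for $E'$, each of $x$ and $x^\sigma$ reduces modulo the corresponding prime of $\overline{K'}$ to one of the two (rational) cusps $\infty,0$ of $X_0(p)$. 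Hence $(x,x^\sigma)_{\mathbb{F}_q}$ is one of $(\infty,\infty)_{\mathbb{F}_q}$, $(0,0)_{\mathbb{F}_q}$, $(\infty,0)_{\mathbb{F}_q}$, or $(0,\infty)_{\mathbb{F}_q}$. The new feature relative to the inert case is precisely the mixed reductions $(\infty,0)$ and $(0,\infty)$: when $q$ is inert there is a single prime above $q$, so $x$ and $x^\sigma$ must reduce to the \emph{same} cusp, and after applying $w_p$ one reduces to the case $(\infty,\infty)_{\mathbb{F}_q}$, which the proof of Theorem \ref{mult} rules out via the formal immersion criterion (Lemma \ref{FIM}).

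So I would argue as follows. Applying $w_p$ to the pair if necessary (which swaps $\infty\leftrightarrow 0$ in both coordinates simultaneously), we are in one of two situations: either $(x,x^\sigma)_{\mathbb{F}_q}=(\infty,\infty)_{\mathbb{F}_q}$, or $(x,x^\sigma)_{\mathbb{F}_q}=(\infty,0)_{\mathbb{F}_q}$. In the first situation, the formal immersion argument from the proof of Theorem \ref{mult} applies verbatim — using the matrix $F_\infty$ attached to $J_e$ (for $p$ non-hyperelliptic, via Kamienny's computation that $T_2$ does not act as a scalar on $J_e$ mod $q$) or the degree-$2$-function argument (for $p$ hyperelliptic, $p\neq 37$) — and forces $(x,x^\sigma)=(\infty,\infty)$, contradicting that $x$ is non-cuspidal. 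This leaves only the mixed case $(x,x^\sigma)_{\mathbb{F}_q}=(\infty,0)_{\mathbb{F}_q}$. Here I would apply $w_p$ to the \emph{second} coordinate alone, i.e. consider the point $\big(x,w_p(x^\sigma)\big)$. Since $w_p$ is defined over $\Q$, this is still a rational point on $X_0^{(2)}(p)$, and it now reduces to $(\infty,\infty)_{\mathbb{F}_q}$. Running Lemma \ref{FIM} at $(\infty,\infty)_{\mathbb{F}_q}$ (the hypotheses on $F_\infty$, $F_{\infty,k}$ hold for all $q>5$, exactly as in the proof of Theorem \ref{mult}), we conclude $\big(x,w_p(x^\sigma)\big)=(\infty,\infty)$ as a point of $X_0^{(2)}(p)$, hence the \emph{set} $\{x, w_p(x^\sigma)\}$ equals $\{\infty,\infty\}$. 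Since $x$ is non-cuspidal, the only way out is that the point $\big(x,w_p(x^\sigma)\big)$ is not forced to be a single fibre this way — which cannot happen once the formal immersion holds — so we must instead have $w_p(x^\sigma)=x$, equivalently $w_p(x)=x^\sigma$ (as $w_p$ is an involution defined over $\Q$, so commutes with $\sigma$). That is conclusion (ii).

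Let me restate the last step more carefully, since that is the only genuinely new point. In the case $(x,x^\sigma)_{\mathbb{F}_q}=(\infty,0)_{\mathbb{F}_q}$, the auxiliary point $P:=\big(x,w_p(x^\sigma)\big)\in X_0^{(2)}(p)(\Q)$ reduces to $(\infty,\infty)_{\mathbb{F}_q}$. If $w_p(x^\sigma)\ne x$, then $P$ is a genuine rational point of $X_0^{(2)}(p)$ distinct from the cuspidal point $(\infty,\infty)$ yet congruent to it mod $q$; but Lemma \ref{FIM} says that a formal immersion at $(\infty,\infty)_{\mathbb{F}_q}$ rigidifies the rational points in that residue class, forcing $P=(\infty,\infty)$ and hence $x\in\{\infty\}$, a contradiction. (The formal immersion hypotheses — rank $2$ of $F_\infty$ and of the $F_{\infty,k}$ mod $q$ — are exactly those verified in the proof of Theorem \ref{mult}, for $X_0(p)$ non-hyperelliptic via Kamienny's data on $J_e$, and for $X_0(p)$ hyperelliptic, $p\neq 37$, via the degree-$2$-function/hyperelliptic-involution argument with $J_0(p)$ of rank $0$.) Therefore $w_p(x^\sigma)=x$, i.e. $w_p(x)=x^\sigma$.

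The main obstacle is essentially bookkeeping rather than a new idea: one has to be careful that applying $w_p$ to a \emph{single} coordinate of a point of $X_0^{(2)}(p)(\Q)$ still produces a $\Q$-rational point (it does, because $w_p$ is defined over $\Q$ and $\sigma$ is the nontrivial element of $\mathrm{Gal}(K'/\Q)$, so $w_p(x^\sigma)=(w_p(x))^\sigma$ and the pair $\{x,(w_p(x))^\sigma\}$ is $\mathrm{Gal}(\overline{\Q}/\Q)$-stable), and that the formal immersion input from Lemma \ref{FIM} is available at $(\infty,\infty)_{\mathbb{F}_q}$ for all $q>5$ — which it is, since that is precisely what the proof of Theorem \ref{mult} establishes. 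Conclusion (ii) is then unavoidable: it records exactly the configuration $(\infty,0)_{\mathbb{F}_q}$ that the formal immersion cannot kill, namely when $x$ and its conjugate are already swapped by $w_p$.
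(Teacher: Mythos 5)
Your reduction of the mixed case to the diagonal case does not work: the auxiliary point $P=\bigl(x,\,w_p(x^\sigma)\bigr)$ is \emph{not} a rational point of $X_0^{(2)}(p)$. A point of $X_0^{(2)}(p)(\Q)$ is a Galois-stable unordered pair, and applying $\sigma$ to $\{x,\,w_p(x^\sigma)\}$ gives $\{x^\sigma,\,w_p(x)\}$ (since $w_p$ is defined over $\Q$); these two pairs coincide only in degenerate situations (e.g.\ $x$ rational, or $x$ and $x^\sigma$ both fixed by $w_p$), neither of which you may assume. Note that even in the case you are trying to rule in, namely $w_p(x)=x^\sigma$, your $P$ becomes $(x,x)$, whose underlying divisor $2x$ is still not Galois-stable. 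Since Lemma \ref{FIM} is stated for, and its proof (via the Abel--Jacobi map into the rank-zero quotient $\mathcal{A}_0$ and injectivity of reduction on rational torsion) genuinely requires, a point of $X_0^{(2)}(p)(\Q)$, it cannot be applied to $P$, and the dichotomy ``either $w_p(x^\sigma)=x$ or $P$ is a rational point congruent to $(\infty,\infty)$'' is never established. There is also a structural reason no formal-immersion rigidity statement can hold at $(\infty,0)_{\mathbb{F}_q}$: the cusps $\infty$ and $0$ form a fibre of $\psi:X_0(p)\to X_0^+(p)$, so when $X_0^+(p)(\Q)$ is infinite (e.g.\ $p=43,53,61$) the residue class of $(\infty,0)$ contains infinitely many rational points of $X_0^{(2)}(p)$, namely pull-backs of rational points of $X_0^+(p)$; the differentials coming from $J_e$ kill exactly these fibre directions, which is why the formal immersion fails there.

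The paper handles the case $(x,x^\sigma)_{\mathbb{F}_q}=(\infty,0)_{\mathbb{F}_q}$ by a different tool, Siksek's relative symmetric Chabauty criterion (Lemma \ref{chabcrit}): one takes the differential $\omega_e$ attached to a newform of the Eisenstein quotient $J_e$, checks that it is annihilating (as $J_e(\Q)$ has rank $0$), has zero trace with respect to $\psi$ (as $J_e$ is a quotient of $J_0^-(p)=J_0(p)/(1+w_p)$), and has unit leading $\underline{q}$-coefficient, and concludes only that $(y,z)\in\psi^*\bigl(X_0^+(p)(\Q)\bigr)$, i.e.\ $w_p(y)=z$ --- membership in the fibre locus rather than equality with the cuspidal point. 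That weaker conclusion is exactly statement (ii), and it is the most one can hope for. Your treatment of the diagonal case $(\infty,\infty)_{\mathbb{F}_q}$ matches the paper's, but the mixed case needs this relative Chabauty input (or the hyperelliptic degree-$2$-function variant noted in the remark after Lemma \ref{chabcrit}), not a twisted application of Lemma \ref{FIM}.
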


The main ingredient in this proof is to use Siksek's relative symmetric Chabauty criterion  \citep[223-224]{Symmchab}.

\begin{lemma}[Relative symmetric Chabauty criterion]\label{chabcrit} Let $p>19$ and let $q \ne 2$, $p$ be a rational prime. Suppose $(y,z) \in  X_0^{(2)}(p)(\Q)$ satisfies $(y,z)_{\mathbb{F}_q} = (\infty,0)_{\mathbb{F}_q}$. Then $w_p(y) = z$.
\end{lemma}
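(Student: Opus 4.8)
The plan is to apply Siksek's relative symmetric Chabauty criterion \citep[pp.~223--224]{Symmchab}, taking as ``known family'' the locus of $w_p$-pairs. Recall that $X_0(p)$ has exactly two cusps, $\infty$ and $0$, interchanged by $w_p$. Inside the smooth surface $X_0^{(2)}(p)$ let $W$ be the $\Q$-curve $\{\, u + w_p(u) : u \in X_0(p) \,\}$, the image of $X_0(p)$ under $u \mapsto u + w_p(u)$; it is isomorphic to $X_0^+(p)$, and a rational point $(y,z)$ satisfies $w_p(y) = z$ exactly when $(y,z) \in W$. Since $p>19$ we have $g(X_0(p)) \ge 2$, and since $\infty \ne 0$ with $\infty$ not fixed by $w_p$, the curve $W$ is smooth at the point $(\infty,0) \in W(\Q)$. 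If $(y,z) = (\infty,0)$ there is nothing to prove, so assume otherwise; it then suffices to show $(y,z) \in W$.

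Let $\mathcal{B}$ be the Eisenstein quotient $J_e(p)$, regarded as a quotient of $J_0(p)$ over $\Q$: by \citep{eisenstein} it is nonzero for $p>19$, has $\mathrm{Rk}(\mathcal{B}(\Q)) = 0$, and factors through $J_0^-(p) = J_0(p)/(1+w_p)$ since $w_p$ acts as $-1$ on it. Let $\iota \colon X_0^{(2)}(p) \to J_0(p)$, $(u,v) \mapsto [\,u+v-\infty-0\,]$, and let $h$ be $\iota$ followed by $J_0(p) \twoheadrightarrow \mathcal{B}$. Since $\iota(u + w_p(u)) = (1+w_p)[u-\infty] \in \ker(J_0(p) \to J_0^-(p))$, the map $h$ vanishes identically on $W$; in particular $h(\infty,0)=0$. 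Now suppose $(y,z) \in X_0^{(2)}(p)(\Q)$ reduces to $(\infty,0) \in X_0^{(2)}(p)(\mathbb{F}_q)$ (which makes sense as $q \ne p$, so $X_0^{(2)}(p)$ and $J_0(p)$ have good reduction at $q$). Then $h(y,z)$ is a torsion point of $\mathcal{B}(\Q)$ (as $\mathcal{B}(\Q)$ is finite) reducing to $h(\infty,0) = 0$ in $\mathcal{B}(\mathbb{F}_q)$; since $q$ is odd, the kernel of $\mathcal{B}(\Q_q) \to \mathcal{B}(\mathbb{F}_q)$ is torsion-free, so $h(y,z)=0$. Thus $(y,z)$ lies on the closed subscheme $h^{-1}(0) \supseteq W$. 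The criterion now reduces everything to the \emph{relative formal immersion} condition: that $h^{-1}(0)$ and $W$ coincide in a formal neighbourhood of $(\infty,0)$ over $\Z_q$. Granting it, the $\Z_q$-point $(y,z)$ reduces into $W(\mathbb{F}_q)$, hence lies on $W$, so being rational $(y,z) \in W(\Q)$ and $w_p(y)=z$. To verify the condition one studies the cotangent map $\mathrm{Cot}(\mathcal{B}) \otimes \mathbb{F}_q \to \mathrm{Cot}_{(\infty,0)}(X_0^{(2)}(p)) \otimes \mathbb{F}_q \cong (\mathrm{Cot}_\infty(X_0(p)) \oplus \mathrm{Cot}_0(X_0(p))) \otimes \mathbb{F}_q$: a differential $\omega$ arising from $\mathcal{B}$ satisfies $w_p^*\omega = -\omega$, and since $w_p$ swaps the two cusps this forces the leading coefficient of $\omega$ at $0$ to be a fixed $q$-adically unit multiple of its first Fourier coefficient $a_1$ at $\infty$; hence the image of $\mathrm{Cot}(\mathcal{B})$ lies in a single line, which — because $h|_W \equiv 0$ — must be the conormal line of $W$ at $(\infty,0)$, and the image is nonzero modulo $q$ because some newform attached to $\mathcal{B}$ has $a_1 = 1 \not\equiv 0 \pmod q$. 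Therefore $h^{-1}(0)$ is smooth of dimension $1$ at $(\infty,0)$ and equals $W$ there.

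The heart of the matter is this last verification: that the formal immersion fails \emph{only} along $W$ near $(\infty,0)$, and does so for \emph{every} admissible prime $q$. It is exactly this that forces the \emph{relative} criterion rather than ordinary symmetric Chabauty, which is unavailable since $J_0^-(p)$ — and hence $J_0(p)$ — can have positive Mordell--Weil rank. The two features that make it work are: every differential coming from the minus part of $J_0(p)$ automatically annihilates the tangent direction of $W$ at $(\infty,0)$ (because $w_p$ acts there by $-1$ and swaps $\infty$ and $0$), so no information transverse to $W$ is lost; and the normalisation $a_1 = 1$ of newforms makes the transverse non-degeneracy hold modulo every prime, with no exception. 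This is why the lemma needs no hypothesis on $q$ beyond $q \ne 2,p$: the role of $q \ne 2$ is only to make the kernel of reduction torsion-free, and that of $q \ne p$ only to give good reduction of $X_0^{(2)}(p)$ and $J_0(p)$.
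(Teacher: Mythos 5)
Your proposal is correct and follows essentially the same route as the paper: both invoke Siksek's relative symmetric Chabauty criterion at $(\infty,0)$ with the Eisenstein quotient $J_e(p)$ supplying an annihilating differential of zero trace (equivalently, your $w_p^*\omega=-\omega$ condition) whose leading coefficient $a_1=1$ is a unit modulo every $q$. The only difference is presentational — you unpack the cotangent/formal-immersion mechanism behind the criterion, while the paper verifies its three hypotheses and cites it as a black box (adding a reference to handle $q=3$, a small-prime subtlety your sketch elides but which does not change the argument).
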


\begin{proof}
Denote by $\psi$ the degree $2$ map $X_0(p) \rightarrow X_0^+(p)$. Since $q \ne p$, both $X_0(p)$ and $X_0^+(p)$ have good reduction at $q$, and since $p>19$, we have $g(X_0(p)) \geq 2$. The argument we use here has links to Mazur's argument in \citep[pp.~142-143]{ratisog}.

 We write $\underline{q} = e^{2\pi i \tau}$, so as to differentiate it from the prime $q$. Let $f_e = \sum_{i \geq 1} a_i \underline{q}^i$, with $a_1=1$, be a newform attached to the Eisenstein quotient, $J_e$, of $J_0(p)$, and write $\omega_e = \sum_{i \geq 1} a_i \underline{q}^{i-1} d\underline{q} \in \mathrm{Cot}(J_e)$ for the corresponding differential, which we view as a global $1$-form on $X_0(p)$ via the inclusion $ \mathrm{Cot}(J_e) \hookrightarrow H^0(X_0(p), \Omega^1)$.

We now apply the relative symmetric Chabauty criterion, as stated in \citep[223-224]{Symmchab}, to the point $(\infty,0)$. We also use the argument of \citep[p.~10]{mquad} to allow $q=3$. Using the terminology of \citep{Symmchab}, $\underline{q}$ acts as a well-behaved uniformiser at $\infty$. The differential $\omega_e$ satisfies three important properties which allows us to apply the criterion. \begin{itemize}
\item The differential $\omega_e$ is \emph{annihilating}. Since $J_e(\Q)$ has rank $0$, $\omega_e$ lies in the kernel on the left of the integration pairing described in \citep[p.~214]{Symmchab}.
\item The differential $\omega_e$ has \emph{zero trace}. The Atkin--Lehner involution $w_p$ induces the trace map (with respect to $\psi$) on global $1$-forms \[1+w_p^*:  H^0(X_0(p), \Omega^1) \longrightarrow H^0(X_0^+(p), \Omega^1). \] Since the projection map $J_0(p) \rightarrow J_e$ factors via $J_0^-(p) = J_0(p)/(1+w_p)$, the $1$-form $\omega_e$ lies in the kernel of this trace map.
\item The differential $\omega_e$ has $\underline{q}$-expansion $(a_1+a_2\underline{q}+a_3\underline{q}^2 + \cdots ) d\underline{q}$, with $a_1 = 1 \not\equiv 0 \pmod{q}$.
\end{itemize} 
Applying the criterion, it follows that $(y,z) \in \psi^*(X_0^+(p)(\Q))$. So $w_p(y)=z$.
\end{proof}

\begin{remark} For $X_0(p)$ hyperelliptic with $p \ne 37$, we could instead prove Lemma \ref{chabcrit} by repeating the argument of the proof of the hyperelliptic case of Theorem \ref{mult}, replacing $(\infty,\infty)$ by $(\infty,0)$.
\end{remark}

The proof of Theorem \ref{splitprime} is then a straightforward consequence of this result.

\begin{proof}[Proof of Theorem \ref{splitprime}]
Since both primes of $K'$ above $q$ are of multiplicative reduction for $E'$, after applying $w_p$ if necessary, $(x,x^\sigma)_{\mathbb{F}_q} = (\infty,\infty)_{\mathbb{F}_q}$ or $(\infty,0)_{\mathbb{F}_q}$. If $(x,x^\sigma)_{\mathbb{F}_q} = (\infty,\infty)_{\mathbb{F}_q}$, we are in the situation of the proof of Theorem \ref{mult} and we conclude that $p \leq 19$ or $p =37$; so we instead suppose that $p>19$, and that $(x,x^\sigma)_{\mathbb{F}_q} = (\infty,0)_{\mathbb{F}_q}$. We then apply Lemma \ref{chabcrit} to conclude that $w_p(x) = x^\sigma$. \end{proof}

We note that Theorems \ref{mult} and \ref{splitprime} combine to give Theorem \ref{thm4}, which is stated in the introduction.

\begin{example} We apply Theorems \ref{splitprime} and \ref{twistthm} to show that $\overline{\rho}_{E,103}$ is irreducible in the case $d=26$. Using Lemma \ref{krauslem}, we find that both primes of $\Q(\sqrt{26})$ above $1031$ are of multiplicative reduction for $E$. Applying Theorem \ref{splitprime} we deduce that $E$ gives rise to a point $x \in X_0(103)(\Q(\sqrt{26}))$ satisfying $w_p(x) = x^\sigma$. So $E$ gives rise to a rational point on the twisted curve $X_0^{26}(103)$. However, applying Theorem \ref{twistthm} with the prime $l = 13$, we find that $X_0^{26}(103)(\Q_{13}) = \emptyset$, a contradiction.
\end{example}

As mentioned in Section 4.1, the case $p=37$ is rather special, since the modular curve $X_0(37)$ is both hyperelliptic and bielliptic, with an elliptic quotient of positive rank. Moreover, the hyperelliptic involution on $X_0(37)$ is not of Atkin--Lehner type.  To prove irreducibility in the case $p=37$, we choose instead to use Lemma \ref{FIM} and work on the curve $X_0(74)$. Our strategy is similar to that of \citep[pp.~19-21]{sporadic}.

\begin{lemma} Let $E'$ be an elliptic curve defined over any quadratic field $K'$, with a $2$-torsion point defined over $K'$. Suppose there exists a prime $q \ne 2,37$ such that $E'$ has multiplicative reduction at all primes of $K'$ above $q$. Then $\overline{\rho}_{E',37}$ is irreducible.
\end{lemma}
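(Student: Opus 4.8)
The plan is to assume for contradiction that $\overline{\rho}_{E',37}$ is reducible and to derive a contradiction by realising $E'$ as a non-cuspidal point of $X_0(74)$ which, by the formal immersion criterion of Lemma~\ref{FIM}, must coincide with a cusp. Passing to $X_0(74)$ instead of working directly on $X_0(37)$ is precisely what avoids the obstruction that the hyperelliptic involution of $X_0(37)$ is not of Atkin--Lehner type, which is the reason the argument of Theorem~\ref{mult} breaks down at $p = 37$.

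First I would produce the point. Since $\overline{\rho}_{E',37}$ is reducible, $E'$ has a $G_{K'}$-stable subgroup $C_{37}$ of order $37$; together with the order-$2$ subgroup generated by the given rational $2$-torsion point, this yields a $G_{K'}$-stable cyclic subgroup of order $74$, hence a non-cuspidal point $x \in X_0(74)(K')$, and so a rational point $(x, x^\sigma)$ on $X_0^{(2)}(74)$, where $\sigma$ generates $\Gal(K'/\Q)$ (we allow $x = x^\sigma$ when $x$ is itself rational). Next I would reduce modulo $q$: since $q \ne 2, 37$, the curve $X_0(74)$ has good reduction at $q$ and its four cusps are all $\Q$-rational; and at each prime of $K'$ above $q$, the curve $E'$ has multiplicative reduction, hence is a Tate curve there and $x$ reduces to a cusp. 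It follows that $(x, x^\sigma)$ reduces modulo $q$ to a pair of cusps $(c_i, c_j)_{\mathbb{F}_q}$, whether $q$ splits, is inert, or is ramified in $K'$.

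Now I would invoke Lemma~\ref{FIM} with $N = 74 = 2 \cdot 37$, noting that $g(X_0(74)) = 8 \ge 2$ and $q \ne 2, p$. Take $\mathcal{A}_0$ to be the largest rank-$0$ quotient of $J_0(74)$ --- computed in \texttt{Magma} from the isogeny decomposition of $J_0(74)$ together with the criterion $L(A_i, 1) \ne 0$, and containing in particular the square of the rank-$0$ elliptic curve of conductor $37$ --- and let $f_1, \dots, f_t$ be the cuspforms attached to it. Provided the matrices $F_\infty$ and $F_{\infty, k}$ ($k = 2, 3, 4$) of Lemma~\ref{FIM} all have rank $2$ modulo $q$, the lemma forces $(x, x^\sigma) = (c_i, c_j)$, i.e. $x$ is a cusp, contradicting that $x$ is non-cuspidal. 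This gives irreducibility of $\overline{\rho}_{E',37}$.

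The main obstacle is the verification of that rank condition; since $q$ is not at our disposal, it has to be checked for every prime $q \notin \{2, 37\}$ at once. Concretely, I would compute the $\underline q$-expansions of the $f_i$ at all four cusps of $X_0(74)$ (the three cusps other than $\infty$ requiring the Atkin--Lehner action on expansions) and verify that, for each of $F_\infty, F_{\infty, 2}, F_{\infty, 3}, F_{\infty, 4}$, the gcd of its $2 \times 2$ minors involves only the primes $2$ and $37$, so that no other prime can drop the rank. The block of $F_\infty$ coming from the two degeneracy images of the conductor-$37$ rank-$0$ newform is $\left( \begin{smallmatrix} 1 & * \\ 0 & 1 \end{smallmatrix} \right)$, of determinant $1$, so $F_\infty$ is immediate; the real work is at the other three cusps, where one may need to enlarge $\{f_1, \dots, f_t\}$ by further rank-$0$ isogeny factors of $J_0(74)$ to attain rank $2$. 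This is the step that rests on the explicit computation, following the template of \citep[pp.~19--21]{sporadic}.
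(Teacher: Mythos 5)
Your proposal is correct and follows essentially the same route as the paper: pass to a non-cuspidal point of $X_0(74)$ via the $2$-torsion point and the putative $37$-isogeny, reduce $(x,x^\sigma)$ to a pair of cusps modulo $q$ using multiplicative reduction, and apply Lemma~\ref{FIM} with $\mathcal{A}_0 = A_1 \times A_2 \times E_b \times E_b$ the rank-$0$ part of $J_0(74)$, checking by machine that the matrices $F_\infty, F_{\infty,2}, F_{\infty,3}, F_{\infty,4}$ have rank $2$ modulo every $q \ne 2,37$. You in fact spell out two steps the paper leaves implicit (the reduction of $(x,x^\sigma)$ to a cuspidal pair, and the uniformity in $q$ via the gcd of the $2\times 2$ minors), and your observation that the block from the two degeneracy images of the conductor-$37$ rank-$0$ newform already gives $F_\infty$ rank $2$ everywhere is consistent with the paper's computation.
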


\begin{proof} Since $E'$ has a $2$-torsion point defined over $K'$, it gives rise to a point $x \in X_0(74)(K')$, so that $(x,x^\sigma) \in X_0^{(2)}(74)(\Q)$. 
The Jacobian of $X_0(74)$ decomposes as the following product of abelian varieties: \[ J_0(74) \sim E_a \times E_a \times A_1 \times A_2 \times E_b \times E_b. \] Here, $E_a$ is the elliptic curve `37a1' of rank $1$, $E_b$ is the elliptic curve `37b1' of rank $0$, and $A_1$,$A_2$ are $2$-dimensional abelian varieties of rank $0$. We have $\mathcal{A}_0 = A_1 \times A_2 \times E_b \times E_b$. Let $f_1, \dots, f_6$ be cuspforms attached to $\mathcal{A}_0$ (here, $f_5=f_6$, so we can exclude $f_6$ if we like). Using Lemma \ref{FIM}, it suffices to check that the matrices $F_\infty, F_{\infty,2}, F_{\infty,3},$ and $F_{\infty,4}$ have rank $2$ modulo $q$ (for any $q \ne 2,37$). We verified that this is the case using \texttt{Magma}. Note that to expand some $f_i$ at a cusp other than $\infty$, we can apply the appropriate Atkin--Lehner involution.
\end{proof}

Using the results of this section in combination with Lemma \ref{krauslem} and Theorem \ref{twistthm} proves irreducibility for most cases appearing in Table 1. The remaining cases are displayed in Table 2. The techniques we explore in the remainder of this section will eliminate these cases, as well as provide alternative strategies for dealing with many values of $p$ and $d$ appearing in Table 1.

\begingroup
\begin{table}[ht!]\label{Tab4}
\begin{center}
\footnotesize
\begin{tabular}{ |c||c|c|c|c|c|}
\hline 
$d$ & $29$  & $53$ & $61$ & $74$ & $89$ \\
\hline
$p$ & $29$  & $53$ & $61$ & $43$ & $53$ \\
\hline
\end{tabular}
\caption{Irreducibility Step 2}
\end{center}
\end{table}
\endgroup
\normalsize

\subsection{Preimages under Modular Parametrisation}

Let $E'$ be an elliptic curve defined over $\Q$ of conductor $N$. Then $E'$ admits a map defined over $\Q$, called the \emph{modular parametrisation} of $E'$: \[ \varphi: X_0(N) \rightarrow E'. \]  We will assume $E'$ is \emph{optimal} so that $\varphi$ is unique up to sign and maps the cusp at infinity on $X_0(N)$ to the identity of $E'$. Write $m$ for the degree of the modular parametrisation, which we refer to as the \emph{modular degree} of $E'$. We note here that the curve $E'$ and the Frey curve $E$ are not related. Their conductors $N$ and $\mathcal{N}$ are also not related. For background on the modular parametrisation map we refer the reader to \citep{firstcourse, yang, hao}.

Using the map $\varphi$ to understand quadratic points over a fixed quadratic field is based on the following observation.

\begin{lemma}Let $E'$ be an optimal elliptic curve of conductor $N$ and write $\varphi: X_0(N) \rightarrow E'$ for its modular parametrisation. Suppose $E'(\Q) = E'(L)$ for $L$ a number field. Then $X_0(N)(L) \subseteq \varphi^{-1}(E'(\Q))$.
\end{lemma}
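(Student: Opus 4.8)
The plan is to chase a point $P\in X_0(N)(L)$ through the modular parametrisation and use the hypothesis $E'(\Q)=E'(L)$ to force its image to be rational. First I would take an arbitrary $P\in X_0(N)(L)$ and consider $\varphi(P)\in E'(L)$, where $\varphi\colon X_0(N)\to E'$ is the modular parametrisation, which is defined over $\Q$. Since $\varphi$ is a $\Q$-morphism, it commutes with the action of $\Gal(\overline{\Q}/\Q)$; in particular, for $\sigma$ a generator of the relevant Galois group, $\varphi(P^\sigma)=\varphi(P)^\sigma$. But by hypothesis $E'(L)=E'(\Q)$, so $\varphi(P)$ is a rational point of $E'$, hence fixed by $\sigma$. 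Therefore $\varphi(P)\in E'(\Q)$, which is precisely the statement that $P\in\varphi^{-1}(E'(\Q))$.

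Since $P$ was arbitrary, this gives $X_0(N)(L)\subseteq\varphi^{-1}(E'(\Q))$, as claimed. The argument is essentially a one-line functoriality observation: the content is entirely in the fact that $\varphi$ is defined over $\Q$ (part of the optimality/normalisation package recalled just before the lemma) together with the collapsing of $E'(L)$ onto $E'(\Q)$. No properties of the cusps, the degree $m$, or the geometry of $X_0(N)$ are needed here.

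There is no real obstacle to this particular statement — the only thing to be careful about is that the inclusion $\varphi^{-1}(E'(\Q))$ is taken inside $X_0(N)(L)$ (or $X_0(N)(\overline{\Q})$) in the obvious way, so that the displayed set on the right-hand side is interpreted as $\{Q\in X_0(N)(L): \varphi(Q)\in E'(\Q)\}$; with that reading the proof is immediate. The genuine work, which presumably comes in the subsequent applications rather than in this lemma, is in verifying the hypothesis $E'(\Q)=E'(L)$ for the specific quadratic fields $L=\Q(\sqrt{d})$ and in computing the finite set $\varphi^{-1}(E'(\Q))$ explicitly to check whether any of its points can arise from the Frey curve $E$.
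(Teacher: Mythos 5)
Your proof is correct and is exactly the one-line observation the paper intends (the lemma is stated without proof there): since $\varphi$ is defined over $\Q$, any $P \in X_0(N)(L)$ has $\varphi(P) \in E'(L) = E'(\Q)$, hence $P \in \varphi^{-1}(E'(\Q))$. The Galois-conjugation remark in your argument is harmless but superfluous; the hypothesis $E'(L)=E'(\Q)$ already does all the work.
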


We would like to compute the fields of definition of preimages of points under the modular parametrisation map. For our purposes, this is only useful when $E'(\Q)=E'(K)$ is finite, but the techniques we develop apply even if this is not the case. 

Suppose $E'$ is given by a Weierstrass equation \[ y^2 + a_1xy+a_3y=x^3+a_2x^2+a_4x+a_6, \] with $a_i \in \Q$. Then $x$ and $y$ are rational functions on $E'$ of degree $2$ and $3$ respectively. We can then pull these back via $\varphi$ to obtain rational functions on $X_0(N) = \Gamma_0(N) \backslash \mathcal{H}^*$:  \[ x(q) = \varphi^*(x) \quad \text{and} \quad y(q) = \varphi^*(y), \] where $q=e^{2 \pi i z}$ for  $z \in \C$.

Using \texttt{SageMath} we can compute the $q$ expansions of $x(q)$ and $y(q)$. The rational functions $x(q)$ and $y(q)$ satisfy the equation of the elliptic curve $E'$, as well as the relation \[ \frac{dx(q)}{2y(q)+a_1x(q)+a_3} = \frac{f(q)dq}{q}, \] where $f$ is the rational newform of level $N$ corresponding to the isogeny class of $E'$. The rational functions $x(q)$ and $y(q)$ on $X_0(N)$ have degrees $2m$ and $3m$ respectively.

We would like to obtain a planar model for the curve $X_0(N)$ by finding a relation between $x(q)$ (or $y(q)$) and another rational function on $X_0(N)$. Such a relation always exists.

\begin{lemma}[{\citep[p.~24]{hao}}] Let $r(q),s(q) \in \Q(X_0(N))$ be rational functions on $X_0(N)$. Then there exists an irreducible polynomial $F \in \Q[R,S]$, which we call a \emph{minimal polynomial relation}, such that $F(r(q),s(q))=0$, and $\deg_R(F) \leq \deg(s(q))$ and  $\deg_S(F) \leq \deg(r(q))$.   
\end{lemma}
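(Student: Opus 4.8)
The plan is to argue entirely at the level of function fields. Recall that $X_0(N)$ is a smooth, geometrically irreducible curve over $\Q$, so $\Q(X_0(N))$ is a function field in one variable over $\Q$ and, in particular, $\Q$ is algebraically closed in it. I would first dispose of the degenerate cases: if $r(q)$ is constant then $\deg(r(q)) = 0$, $r(q) \in \Q$, and $F(R,S) = R - r(q)$ does the job; symmetrically if $s(q)$ is constant. So from now on assume both $r(q)$ and $s(q)$ are non-constant, hence transcendental over $\Q$.

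Next I would produce the relation. The $\Q$-algebra homomorphism $\Q[R,S] \to \Q(X_0(N))$ determined by $R \mapsto r(q)$, $S \mapsto s(q)$ cannot be injective, since its source has Krull dimension $2$ while its image is a domain whose fraction field $\Q(r(q),s(q))$ has transcendence degree $1$ over $\Q$ (it is at least $1$ because $r(q)$ is transcendental, and at most $1$ because it sits inside $\Q(X_0(N))$). Hence the kernel $\mathfrak{p}$ is nonzero, and since $\Q[R,S]/\mathfrak{p}$ has dimension $1$ we get $\mathrm{ht}(\mathfrak{p}) = 2 - 1 = 1$. A height-one prime in the UFD $\Q[R,S]$ is principal, so $\mathfrak{p} = (F)$ for some irreducible $F \in \Q[R,S]$, and $F(r(q),s(q)) = 0$ by construction. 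Note $F$ must genuinely involve $S$ (else it would be a polynomial relation satisfied by the transcendental $r(q)$) and likewise must involve $R$.

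Then I would extract the degree bounds via Gauss's Lemma. Viewing $F$ in $\Q[R][S]$: because $F$ is irreducible in $\Q[R,S]$ and has positive $S$-degree, it is primitive as a polynomial in $S$ over $\Q[R]$, so by Gauss's Lemma it stays irreducible over $\Q(R)$. The leading coefficient of $F$ in $S$ is a nonzero element of $\Q[R]$, so since $r(q)$ is transcendental over $\Q$, specialising $R \mapsto r(q)$ does not drop the $S$-degree, and (transporting irreducibility along the isomorphism $\Q(R) \xrightarrow{\sim} \Q(r(q))$, $R \mapsto r(q)$) the polynomial $F(r(q),S) \in \Q(r(q))[S]$ is, up to a scalar in $\Q(r(q))^{\times}$, the minimal polynomial of $s(q)$ over $\Q(r(q))$. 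Therefore
\[ \deg_S(F) = [\Q(r(q),s(q)) : \Q(r(q))] \leq [\Q(X_0(N)) : \Q(r(q))] = \deg(r(q)), \]
the last equality being the standard identification of the degree of the rational function $r(q)$ with the degree of the induced morphism $X_0(N) \to \mathbb{P}^1_\Q$. Interchanging the roles of $R$ and $S$ gives $\deg_R(F) \leq \deg(s(q))$, completing the argument.

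The inputs I would simply cite are that a smooth geometrically irreducible curve has $\Q$ algebraically closed in its function field and that $[\Q(X_0(N)):\Q(r(q))] = \deg(r(q))$ for non-constant $r(q)$. The one point that needs genuine care — and which I regard as the main (mild) obstacle — is the bridge between "$F$ irreducible in $\Q[R,S]$" and "$F(r(q),S)$ is a scalar multiple of the minimal polynomial of $s(q)$ over $\Q(r(q))$": one must verify primitivity so that Gauss's Lemma applies, and verify that the specialisation $R \mapsto r(q)$ preserves the $S$-degree, which is precisely where transcendence of $r(q)$ over $\Q$ is used. Everything else is formal.
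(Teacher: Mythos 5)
Your argument is correct: the height-one/UFD step gives the irreducible relation, and the Gauss's-Lemma bridge identifying $F(r(q),S)$ with the minimal polynomial of $s(q)$ over $\Q(r(q))$ correctly yields $\deg_S(F) = [\Q(r(q),s(q)):\Q(r(q))] \leq [\Q(X_0(N)):\Q(r(q))] = \deg(r(q))$, and symmetrically in the other variable. The paper does not prove this lemma but only cites it from Chen's thesis, and your function-field proof is the standard one underlying that reference, so there is nothing to compare beyond noting that your write-up is a complete, self-contained justification (the only caveat being the vacuous edge case where both functions are constant, which never occurs in the paper's applications).
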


Moreover, the following result tells us that we need only check a polynomial relation up to certain precision.

\begin{lemma}[{\citep[p.~28]{hao}}]
Let $r(q),s(q) \in \Q(X_0(N))$ be rational functions on $X_0(N)$.  Suppose $G \in \Q[R,S]$ satisfies $G(r(q),s(q))=O(q^M)$ for some integer $M > 2\deg(r)\deg(s)$. Then $G(r(q),s(q))=0$.
\end{lemma}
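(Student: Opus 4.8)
The plan is to set $h(q) := G(r(q),s(q))$, which lies in the function field $\Q(X_0(N))$ since $r(q),s(q)$ do, and to show it vanishes identically by a zeros-versus-poles count. Suppose for contradiction $h \ne 0$. The idea is that the hypothesis $h = O(q^M)$ forces a zero of very high order at the cusp $\infty$, while the total pole order of $h$ on $X_0(N)$ is at most $2\deg(r(q))\deg(s(q)) < M$, which is impossible for a nonzero rational function on a smooth projective curve.

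First I would bound the pole divisor of $h$. Its poles can only lie among the poles of $r(q)$ and of $s(q)$. Writing $d_R = \deg_R(G)$ and $d_S = \deg_S(G)$ — which for the relevant $G$ (the candidate minimal polynomial relation of the preceding lemma) satisfy $d_R \le \deg(s(q))$ and $d_S \le \deg(r(q))$ — at any point $P$ where $r(q)$, resp. $s(q)$, has a pole of order $a_P \ge 0$, resp. $b_P \ge 0$, every monomial $R^iS^j$ appearing in $G$ has $i \le d_R$ and $j \le d_S$, so $r(q)^i s(q)^j$ has a pole of order at most $a_P d_R + b_P d_S$ at $P$; hence $\ord_P(h) \ge -(a_P d_R + b_P d_S)$. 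Summing over all such $P$ and using $\sum_P a_P = \deg(r(q))$ and $\sum_P b_P = \deg(s(q))$, the degree of the pole divisor of $h$ is at most $d_R\deg(r(q)) + d_S\deg(s(q)) \le 2\deg(r(q))\deg(s(q))$. (The same estimate follows by pulling back the curve $\{G=0\}$, viewed inside $\mathbb{P}^1\times\mathbb{P}^1$, along the morphism $(r(q),s(q)) : X_0(N) \to \mathbb{P}^1\times\mathbb{P}^1$ and computing an intersection number.)

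Next I would invoke the cusp: on $X_0(N)$ the cusp $\infty$ has width $1$, so $q = e^{2\pi i z}$ is a uniformiser there and $G(r(q),s(q)) = O(q^M)$ means precisely $\ord_\infty(h) \ge M$. Since $\sum_{P \in X_0(N)} \ord_P(h) = 0$ for the nonzero function $h$ on the smooth projective curve $X_0(N)$, and $\infty$ contributes a zero, we get
\[ M \le \ord_\infty(h) = -\!\!\sum_{P \ne \infty} \ord_P(h) \le \deg(\mathrm{div}(h)_{\le 0}) \le 2\deg(r(q))\deg(s(q)), \]
where $\mathrm{div}(h)_{\le 0}$ denotes the pole divisor. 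This contradicts the hypothesis $M > 2\deg(r(q))\deg(s(q))$, so $h = G(r(q),s(q)) = 0$.

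The only delicate point is the pole count in the second step: one must correctly handle points that are simultaneously poles of $r(q)$ and $s(q)$, keep the bidegree bookkeeping straight, and make sure the degree constraints $\deg_R(G) \le \deg(s(q))$, $\deg_S(G) \le \deg(r(q))$ are in force, since the precise bound $2\deg(r)\deg(s)$ — and hence the $q$-adic precision $M$ needed — depends on them. The remaining ingredients (a nonzero function on a projective curve has equally many zeros and poles, and $\infty$ has width one on $X_0(N)$) are standard.
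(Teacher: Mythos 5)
Your argument is correct, and it is the standard zeros-versus-poles count that the paper delegates to the cited reference (Chen's thesis) rather than proving itself: $q$ is a uniformiser at the width-one cusp $\infty$, so the hypothesis forces $\ord_\infty(h)\ge M$ for $h=G(r(q),s(q))$, while the pole divisor of $h$ has degree at most $\deg_R(G)\deg(r)+\deg_S(G)\deg(s)\le 2\deg(r)\deg(s)<M$, contradicting $\deg(\mathrm{div}(h))=0$ unless $h=0$. The one substantive point you raise is worth emphasising: as literally stated the lemma omits the hypotheses $\deg_R(G)\le\deg(s)$ and $\deg_S(G)\le\deg(r)$, and without them the conclusion fails (e.g.\ if $s$ vanishes at $\infty$, take $G=S^k$ for $k$ large), so these bounds must be read into the statement from the preceding lemma; your proof correctly identifies and uses them, and is complete once they are assumed.
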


Let $r(q),s(q) \in \Q(X_0(N))$ and let $F \in \Q[R,S]$ be a minimal polynomial relation for these rational functions. Then usually $F$ will have degree $\deg(s)$ in $R$ and degree $\deg(r)$ in S. If this is the case then $F$ gives a planar model for the modular curve $X_0(N)$. If the degrees are less than these maxima, then we will obtain a model for a quotient of $X_0(N)$. For example, the equation of the elliptic curve $E'$ is  a minimal polynomial relation between $x(q)$ and $y(q)$.

We aim to find a minimal polynomial relation involving between $x(q)$ and another rational function on $X_0(N)$ that will give us a planar model for our modular curve. A natural first choice is the $j$-function: \[ j(q) = q^{-1} + 744 + 196884q + 21493760q^2  + \mathcal{O}(q^3)  \in \Q(X_0(N)) \text{ for all } N. \] The degree of $j(q)$ is given by the index of $\Gamma_0(N)$ in the full modular group $\mathrm{SL}_2(\mathbb{Z})$. As $N$ gets large, say $N \geq 50$, it quickly becomes impractical to compute a minimal polynomial relation between $x(q)$ and $j(q)$, and so we seek to replace $j(q)$ with a rational function on $X_0(N)$ of smaller degree. We do this using eta products.

First define \emph{Dirichlet's eta function} as \[\eta(q) := \frac{1}{q^{24}}\prod_{n=1}^{\infty}(1-q^n).\]
An \emph{eta product of level $N$} (also referred to in the literature as an eta quotient) is then given by
\[ s(q) = \prod_{d \mid N} \eta(q^d)^{r_d} = \prod_{d \mid N} \eta_d^{r_d}, \] for some integers $r_d$, and where we write $\eta_d$ for $\eta(q^d)$. Such an eta product need not be a rational function on $X_0(N)$, but if the integers $r_d$ satisfy certain conditions, then it is. 

\begin{theorem}[Ligozat's Criteria {\citep[p.~28]{lig}}] An eta product of level $N$, $\prod_{d \mid N} \eta_d^{r_d}$, is a rational function on $X_0(N)$ if the following conditions are satisfied:
\begin{align*}
& 1.~~ \sum_d r_d \frac{N}{d} \equiv 0 \pmod{24};  \qquad 
2. ~~ \sum_d r_d d \equiv 0 \pmod{24};   \\
& 3. ~~ \sum_d r_d = 0;   \qquad  \qquad \qquad \qquad 
4. ~~ \sum_{d \mid N} \left(\frac{N}{d}\right)^{r_d} \in \Q^2.
\end{align*}

\end{theorem}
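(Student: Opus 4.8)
The plan is to regard $s(q) = \prod_{d\mid N}\eta_d^{r_d}$ first as a function on the upper half-plane $\mathcal{H}$ and then show it descends to the compact Riemann surface $X_0(N)$ as a meromorphic function with a $q$-rational expansion, which is exactly what it means to be an element of $\Q(X_0(N))$. Since $\eta(q)$ is holomorphic and nowhere vanishing on $\mathcal{H}$, so is $s$; hence $s$ has no zeros or poles on $\mathcal{H}$, and it suffices to establish three things: (a) $s(\gamma\tau) = s(\tau)$ for every $\gamma\in\Gamma_0(N)$ (invariance with trivial automorphy factor and trivial character); (b) $s$ is meromorphic at each cusp of $X_0(N)$; (c) the $q$-expansion of $s$ has rational coefficients. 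Granting (a)--(c), $s$ is a meromorphic function on $X_0(N)$ defined over $\Q$, and we are done.

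For (a) and (c) the key input is Dedekind's transformation law: for $\gamma = \left(\begin{smallmatrix} a & b \\ c & e\end{smallmatrix}\right)\in\mathrm{SL}_2(\Z)$ with $c>0$ one has $\eta(\gamma\tau) = \varepsilon(\gamma)\,(c\tau+e)^{1/2}\eta(\tau)$, where $\varepsilon(\gamma)$ is an explicit $24$th root of unity built from a Dedekind sum together with a Jacobi symbol. Given $d\mid N$ and $\gamma\in\Gamma_0(N)$, one factors $\left(\begin{smallmatrix}d & 0\\0&1\end{smallmatrix}\right)\gamma = \gamma_d\left(\begin{smallmatrix}d&0\\0&1\end{smallmatrix}\right)$ with $\gamma_d = \left(\begin{smallmatrix}a & bd\\ c/d & e\end{smallmatrix}\right)\in\mathrm{SL}_2(\Z)$ (this uses $d\mid N\mid c$ and that $ae-bc=1$), so that $\eta_d(\gamma\tau) = \varepsilon(\gamma_d)(c\tau+e)^{1/2}\eta_d(\tau)$. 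Multiplying over $d$ with weights $r_d$, the factor $(c\tau+e)^{1/2}$ appears to the power $\tfrac12\sum_{d\mid N}r_d$, which is trivial precisely by Condition $3$; what survives is the residual multiplier $\prod_{d\mid N}\varepsilon(\gamma_d)^{r_d}$. For (c) one notes separately that $s(q) = q^{\frac1{24}\sum_d r_d d}\prod_{d,n}(1-q^{dn})^{r_d}$, and Condition $2$ forces the exponent $\tfrac1{24}\sum_d r_d d$ to be an integer, so $s$ is an honest Laurent series in $q$ with integer coefficients.

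The core computation is then to show that the residual root of unity $\prod_{d\mid N}\varepsilon(\gamma_d)^{r_d}$ equals $1$ for every $\gamma\in\Gamma_0(N)$. Writing $\varepsilon(\gamma_d)$ in the standard way, it decomposes into a $12$th-root-of-unity part governed by the Dedekind sums $s(e,c/d)$ and a $\pm 1$ Jacobi-symbol part. Following Ligozat (or Newman, or the treatment in Ono's book), a reciprocity/collection argument shows the Dedekind-sum part is killed exactly by the two congruences $\sum_{d\mid N}r_d\tfrac{N}{d}\equiv 0$ and $\sum_{d\mid N}r_d d\equiv 0\pmod{24}$, i.e. Conditions $1$ and $2$, while the remaining signs assemble into a single Jacobi symbol whose numerator is a rational-square multiple of $\prod_{d\mid N}(N/d)^{r_d}$; Condition $4$, that this product is a rational square, then forces the sign to be $+1$ uniformly. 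This gives (a). Finally, for (b), each cusp of $X_0(N)$ is $\Gamma_0(N)$-equivalent to some $a/c$ with $c\mid N$; conjugating $s$ by a matrix sending $\infty$ to $a/c$ and applying Dedekind's formula once more yields a local $q$-expansion beginning with a finite (possibly negative) power of the local uniformiser — the order of vanishing being a rational multiple of $\sum_{d\mid N}\tfrac{\gcd(c,d)^2}{d}r_d$ — so $s$ is meromorphic there. Since there are finitely many cusps, $s\in\Q(X_0(N))$.

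I expect the main obstacle to be the bookkeeping in the third paragraph: disentangling the Dedekind-sum contribution from the Jacobi-symbol contribution in $\prod_{d\mid N}\varepsilon(\gamma_d)^{r_d}$ and matching their vanishing precisely to Conditions $1$, $2$ and $4$. This is classical but genuinely delicate — it is where all four hypotheses are actually consumed — and the cleanest route is to cite the relevant reciprocity computations rather than reproduce them, reducing the argument to the structural points (weight zero from Condition $3$, integrality of the leading exponent from Condition $2$, triviality of the multiplier from Conditions $1$, $2$, $4$) together with the soft fact that a weight-$0$ meromorphic modular function for $\Gamma_0(N)$ that is meromorphic at the cusps is a rational function on $X_0(N)$.
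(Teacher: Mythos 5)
The paper does not prove this statement; it is quoted directly from Ligozat (and appears in equivalent form in Newman's work and in Ono's \emph{Web of Modularity}, Theorem 1.64), so there is no internal proof to compare against. Your outline is the standard and correct argument: the factorisation $\left(\begin{smallmatrix}d&0\\0&1\end{smallmatrix}\right)\gamma=\gamma_d\left(\begin{smallmatrix}d&0\\0&1\end{smallmatrix}\right)$ is right, condition 3 does kill the automorphy factor, condition 2 does give integrality of the $q$-exponent (note you are implicitly using the correct normalisation $\eta=q^{1/24}\prod(1-q^n)$ rather than the paper's typo $q^{-24}$), condition 4 is equivalent to $\prod d^{\,r_d}\in\Q^2$ given condition 3 and so trivialises the residual quadratic character, and deferring the Dedekind-sum bookkeeping to the classical references is appropriate for a result the paper itself only cites.
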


We note here that the support of the divisor of an eta product $s(q) \in \Q(X_0(N))$ is contained in the set of cusps of $X_0(N)$. Using \texttt{SageMath} we can find a basis for the group of eta products of level $N$ that are rational functions on $X_0(N)$ (i.e. that satisfy Ligozat's criteria). We can then choose any one of these (or some combination), say $s(q)$. It is natural to start by choosing a basis element of minimal degree. We find a minimal polynomial relation between $x(q)$ and $s(q)$, and substituting in $x$-coordinates of points in $E'(\Q)$ will give the $s$-values of the preimages of points under the modular parametrisation map, and from this we can often deduce their field of definition.

There are two issues that can arise here. The first is that the minimal polynomial relation may not be of maximal degree in its two variables. This will occur when the map $x(q) \times s(q): X_0(N) \rightarrow \mathbb{P}^1 \times \mathbb{P}^1$ (viewing $x(q)$ and $s(q)$ as morphisms from $X_0(N)$ to $\mathbb{P}^1$) is not injective, and so the equation we obtain gives a planar model for a curve $Y$ which is a finite quotient of $X_0(N)$. It is still possible to recover some information in this case.

The other issue is that the $s$-values we obtain may not give the field of definition of the points in the preimage. For example, if a degree $2$ rational divisor on $X_0(N)$ has an $s$-value of $a \in \Q$, with multiplicity $2$, then this could be due to two rational points, or a pair of quadratic points. By considering the cusps it is most likely we can conclude it is a pair of quadratic points, but we still do not know their field of definition, which is what we are ultimately interested in. We will see this in the example below.

To overcome both of these problems, we can usually simply replace our eta product by a different one, and if necessary combine information from multiple eta products as we see below.

\begin{example}[Eta Product Method for $X_0(116)$]

We consider here the case $K= \Q(\sqrt{29})$ and $p=29$. There are no elliptic curves of rank $0$ over $K$ with conductor $29$ or $58$, but the elliptic curve $E'$ with Cremona label `116b1' has Mordell--Weil group $\Z / 3\Z$ over both $\Q$ and $K$. The curve is given by \[E': y^2 = x^3+x^2-4x+4. \] We have \[ E'(\Q) = E'(\Q(\sqrt{29})) = \{0_{E'},R,-R \}, \] where $R$ has $x$-coordinate $0$. The modular degree of $E'$ is $8$.

We work with the modular curve $X_0(116)$ which is of genus $13$ and has six rational points: the six cusps. We denote these cusps by $c_\infty, c_0, c_2, c_4, c_{29},$ and $c_{58}$. We would like to show in this case that $X_0(116)(\Q(\sqrt{29})) = X_0(116)(\Q)$ as this will prove that $\overline{\rho}_{E,29}$ is irreducible.

We find a basis, using \texttt{SageMath}, for the group of eta products at level $116$. This basis has five elements. The first four have degree $12$ as rational functions on $X_0(116)$, and the fifth has degree $14$. We start by choosing the first basis element and find the minimal polynomial relation $F_1(X,S)$ between $x(q)$ and $s_1(q)$. This polynomial has degree $6$ in $X$ and $8$ in $S$, and so does not give a planar model for $X_0(116)$, but rather a degree $2$ quotient of $X_0(116)$. Although we can still obtain information from this, we instead work with the second basis element \[s_2 = \eta_1^{-3} \cdot  \eta_2^4 \cdot \eta_4^{-1} \cdot  \eta_{29}^{-1} \cdot  \eta_{58}^4 \cdot \eta_{116}^{-3}, \] with divisor \[ (s_2) = -6(c_\infty) -6(c_0)+ 5(c_2) + (c_{4}) + (c_{29}) + 5(c_{58}) . \] We calculate a minimal polynomial relation $F_2(X,S)$ for $x(q)$ and $s_2(q)$ and find that this time it has degree $12$ in $X$ and degree $16=2m$ in $S$, so we have obtained a planar model for our curve. Some terms of this polynomial are as follows \[ F_2(X,S) = X^{12}S^{14} + \cdots + X^6S^{16} + \cdots + 1048576XS^2 - 4194304S^3. \]
Substituting in the value $0 = x(R) = x(-R)$ we find that  \begin{align*}F_2(0,S) = & -4096 (S)^3(S-2)^2(S^2-8S+8)^2 (S^2+2S+2) \\ & (S^4-4S^3+6S^2-4S+2).
\end{align*}
The factor $S^3$ corresponds to three cusps in the preimage, $\varphi^{-1}(\{R,-R\})$, of $R$ and $-R$. The factor  $S^4-4S^3+6S^2-4S+2$ corresponds to a tuple of quartic points defined over the cyclotomic extension $\Q(\zeta_8)$, and the factor $S^2+2S+2$ corresponds to a pair of quadratic points defined over $\Q(\sqrt{-1})$. The factor $(S^2-8S+8)^2$ could either correspond to a pair of quadratic points defined over $\Q(\sqrt{2})$, or a tuple of quartic points. The factor $(S-2)^2$ may correspond to a pair of rational points or a quadratic point. We know $ |X_0(116)(\Q) | = 6$ and so we will shortly be able to see that these rational $s_2$-values do not arise from a pair of rational points, so $(S-2)^2$ corresponds to a pair of quadratic points, but we cannot say over which quadratic field they are defined. Finally, this factorisation does not display any poles of $s_2$ appearing in the preimage of $R$ or $-R$. In the factorisation of $T^{16}F_2(0,1/T)$ (which gives the $1/s_2$-values in the preimage) there is a factor $T$. This corresponds to a pole of $s_2$, and shows that there is a fourth cusp in preimage.

In order to understand the preimage of $0_{E'}$ we first define $G_2(Z,S):=Z^{12}F_2(1/Z,S)$, as setting $Z=0$ will correspond to setting $ X = \infty$. We find \[ G_2(0,S) = S^2(S^2+2S+2)^2(S^4-4S^3+6S^2-4S+2)^2. \] We note $G_2(0,S)$ is a square since $-0_{E'} = 0_{E'}$. By considering this factorisation in conjunction with $T^{16}G(0,1/T)$ we see that we have two cusps, a pair of quadratic points defined over $\Q(\sqrt{-1})$, and a tuple of quartic points defined over $\Q(\zeta_8)$.

In order to understand more about the fields of definition of the preimages of $R$ and $-R$, we use the fifth basis element \[ s_5 = \eta_1^2 \cdot \eta_2^{-2} \cdot \eta_4^2 \cdot \eta_{29}^{-2} \cdot \eta_{58}^2 \cdot \eta_{116}^{-2}, \] with divisor \[ (s_5) = -7(c_\infty) +7(c_0)- 7(c_{4}) + 7(c_{29}). \] Note that this divisor is only supported on four of the six cusps. This eta product has degree $14$. We find a minimal polynomial relation $F_5(X,S)$. It has degree $14$ in $X$ and $16$ in $S$, and we have \begin{align*} F_5(0,S) = & 16384 (S)(S+1)(S+29)(S^2-10S+29)^2(S^2+4S+29) \\  & (S^2+10S+29) (S^4-28S^3+272S^2-812S+841).\end{align*} As before, we recover four cusps, and a tuple of quartic points. The quadratic factors $(S^2+4S+29)$ and $(S^2+10S+29)$ both correspond to pairs of quadratic points defined over $\Q(\sqrt{-1})$. We could see the field of definition of one of these pairs using $s_2$, but the other pair had $s_2$-values $2$ and we could not deduce its field of definition. We now see that this quadratic point is defined over $\Q(\sqrt{-1})$. We also note that the factor $(S^2-10S+29)^2$ corresponds to either a pair of quadratic points defined over $\Q(\sqrt{-1})$, or a tuple of quartic points. From its $s_2$-value, we know it must be either a  pair of quadratic points defined over $\Q(\sqrt{2})$, or a tuple of quartic points. Combining these two pieces of information, we conclude it must be a tuple of quartic points, and that its field of definition is a quadratic extension of both $\Q(\sqrt{-1})$ and $\Q(\sqrt{-2})$, so the points must be defined over $\Q(\sqrt{-1}, \sqrt{2}) = \Q(\zeta_8)$.

We conclude that $\varphi^{-1}(E'(\Q)) = \varphi^{-1}(E'(\Q(\sqrt{29})))$ is made up of six rational cusps, three pairs of quadratic points defined over $\Q(\sqrt{-1})$, and three tuples of quartic points defined over $\Q(\zeta_8)$, making up $24 = 8 \cdot | E'(K) |$ points in total. This proves that  $X_0(116)(\Q(\sqrt{29})) = X_0(116)(\Q)$, and in fact determines $X_0(116)(L)$ for any number field $L$ with $E'(\Q) = E'(L)$.
\end{example}

We applied the same techniques to prove irreducibility for the remaining values of $d$ and $p$ appearing in Table 2, other than $d=61,p=61$ and $d=74,p=43$, which we consider in Section 4.4.

\subsection{A Mordell--Weil Sieve}
In this section, we study the modular curves $X_0(43)$ and $X_0(61)$ to prove irreducibility in the cases $d=61,p=61$ and $d=74,p=43$, thus completing the proof of Theorem \ref{irred}. The curves $X_0(61)$ and $X_0(43)$ are bielliptic and non-hyperelliptic. Their quotients $X_0^+(43)$ and $X_0^+(61)$ are the elliptic curves with Cremona labels `43a1' and `61a1' respectively, each of which has rank $1$ and trivial torsion. We employ a version of the Mordell--Weil sieve to study $X_0^d(N)(\Q)$. We illustrate the sieving method for $X_0(61)$, although the same techniques will apply for other curves, and this method also has some overlap with the other methods we have seen. For a general introduction to the (usual) Mordell--Weil sieve, we refer the reader to \citep{exposieve}.

The curve $X_0(61)$ has genus $4$. Using the `small modular curves' package in \texttt{Magma}, we obtain a smooth model for this curve, the Atkin--Lehner involution $w_{61}$, and the $j$-map.
We start by obtaining a model for which the Atkin--Lehner involution is diagonalised. We do this by finding a matrix diagonalising $w_{61}$ and applying the corresponding coordinate change to the equations of our curve. We obtain the following model in $\mathbb{P}^3$:

\begin{align*} & -4Y^2-4XZ+Z^2  =  T^2,  \\
& X^3 - X^2Y - 3XY^2 - X^2Z + XYZ + Y^2Z - YZ^2  = 0.
\end{align*}

We see that this is the intersection of a quadric and a cubic surface. We write $F(X,Y,Z) = -4Y^2-4XZ+Z^2$, and $G(X,Y,Z)$ for the homogeneous cubic in the second defining equation of the above model. The coordinate change we have applied introduces $2$ as a prime of bad reduction for this model. The Atkin--Lehner involution is now given by \begin{align*}
w_{61}: X_0(61) & \longrightarrow X_0(61)  \\ 
(x: y: z: t) &  \longmapsto (x: y: z: -t). 
\end{align*}
We see from these equations that we have the degree $2$ map \begin{align*} \psi: X_0(61) & \longrightarrow X_0^+(61) \\
(x:y:z:t) &  \longmapsto (x:y:z), 
\end{align*} with $X_0^+(61)$ the elliptic curve defined by $G(X,Y,Z) = 0$ in $\mathbb{P}^2$. 

We suppose, hoping to obtain a contradiction, that our Frey curve $E$ gives rise to a non-exceptional quadratic point on $X_0(61)(\Q(\sqrt{d}))$, which we denote by $P$ here (instead of $x$). We are interested in the case $d=61$, but we in fact obtained a contradiction for all $d>0$ we tested. As $P$ is a non-exceptional point, $w_{61}(P) = P^\sigma$. It follows that $P$ can be expressed as $P = (x: y: z: b\sqrt{d})$ with $x,y,z,b \in \mathbb{Q}$. As $\psi(P) \in X_0^+(61)(\Q)$, we have that $\psi(P) = m \cdot R$, for some $m \in \Z$, where $R$ generates the group $X_0^+(61)(\Q) \cong \Z$. 

Choose a prime $l \nmid d$  of good reduction for both $X_0(61)$ and $X_0^+(61)$ (given by the above models); in particular, $l \nmid 2 \cdot 61 \cdot d$. Write $N_l$ for the order of $R$ in the reduction of $X_0^+(61)$ modulo $l$. Write $k$ for the residue field of $K$ modulo a prime above $l$. This will either be $\mathbb{F}_l$ or $\mathbb{F}_{l^2}$. We have the following commutative diagram, where $\sim$ denotes reduction modulo $l$:

\begin{center}
\begin{tikzcd} X_0(61)\arrow[r, "\psi"] \arrow[d, "\sim"] & X_0^+(61) \arrow[d, "\sim" ] \\  \widetilde{X}_0(61)  \arrow[r,  "\widetilde{\psi}"]& \widetilde{X}_0^+(61)  \end{tikzcd}
\end{center}

Since $\psi(P) = m \cdot R$, we see that $\widetilde{\psi}(\widetilde{P}) = \overline{m} \cdot \widetilde{R}$, where $\overline{m} \equiv m \pmod{N_l}$. So $\widetilde{P} \in \widetilde{\psi}^{-1}(\overline{m} \cdot \widetilde{R})$. Fix $m_0 \in \{0, \cdots, N_l-1 \}$. Then we can explicitly compute the set $\widetilde{\psi}^{-1}(m_0 \cdot \widetilde{R})=\{Q_1,Q_2 \}$, where $Q_1 = (u,v,w,s)$ and $Q_2 = \widetilde{w_{61}}(Q_1) = (u,v,w,-s)$, with $u,v,w \in \mathbb{F}_l \subseteq k$ and $s \in k$. We note that we may have $Q_1 = Q_2$.

We would like to try and argue that $\widetilde{P} \notin \{Q_1,Q_2 \}$ if possible, as we can then conclude that $m \not\equiv m_0 \pmod{N_l}$. There are two strategies we can use.

\begin{enumerate}
\item The point $P = (x : y : z : b\sqrt{d})$ satisfies $F(x,y,z) = db^2$, and so reducing mod $l$ we have \[ F(\widetilde{x},\widetilde{y},\widetilde{z}) \cdot \widetilde{d}^{-1} \equiv \widetilde{b}^2 \pmod{l}.\] It follows that $F(u,v,w) \cdot \widetilde{d}^{-1}$ is a square mod $l$, so if this is not the case, then $m \not\equiv m_0 \pmod{N_l}$.

\item The Frey curve $E$ has full two-torsion over $K$. If $\widetilde{P}$ is not a cusp on the reduced modular curve $\widetilde{X}_0(61)$, then it corresponds to an elliptic curve with full two-torsion over $k$. Also, since $w_{61}(P)$ corresponds to the elliptic curve $E/C$ where $C$ is a cyclic subgroup of order $61$, it follows that $w_{61}(P)$ also has full two-torsion over $K$. Suppose $l>3$ (to avoid the $j$-invariant in characteristic $3$). Then if $Q_1$ is not a cusp, and all elliptic curves over $k$ with $j$-invariant $\widetilde{j}(Q_1)$ (this consists of two elliptic curves when $\widetilde{j}(Q_1) \not\equiv 0, 1728 \pmod{l}$, and a maximum of six elliptic curves otherwise) do not have full-two torsion over $k$, then we have a contradiction, and we conclude $m \not\equiv m_0 \pmod{N_l}$. 
\end{enumerate}

We combine these two methods of elimination to obtain a list of possibilities for $m \pmod{N_l}$. We then repeat this process with a list of primes $\{l_1, \cdots, l_r\}$, and use the Chinese remainder theorem to obtain a list of possibilities for $m \pmod{N}$, where $N := \gcd(N_{l_1}, \cdots, N_{l_r})$. If this list of possibilities is empty then we obtain our desired contradiction. We choose our primes $l_i$ so that the orders  $N_{l_i}$ are small and share many prime factors. This helps avoid a combinatorial explosion due to the Chinese remainder theorem, and also increases the likelihood of obtaining contradictory information. For the curve $X_0(61)$ and $d=61$, using the primes $5,7,11,13$ sufficed to reach a contradiction.

 We note the importance of using both elimination steps in the sieve. If we do not sieve using $j$-invariants, then we found that the sieve did not eliminate enough possibilities for $m \pmod {N_l}$ and we could not reach a contradiction. If we do not sieve using the first method of elimination, then we are unable to eliminate the possibility that $P$ reduces to a cusp modulo $l$ for each prime in our list (i.e. that each prime is a prime of multiplicative reduction for $E$), and so we will not obtain a contradiction.

The sieving method for $X_0(43)$ is identical. The curve is of genus $3$ and we used the following plane quartic model in $\mathbb{P}^3$: \begin{align*}
64X^4 + 48X^3Y + 16X^2Y^2 + 8XY^3 - 3Y^4+ \\ (16X^2+8XY+2Y^2)T^2+T^4 = 0, 
\end{align*} with the Atkin--Lehner involution given by $(x:y:t) \mapsto (x:y: -t)$, and the map $\psi$ to $X_0^+(43) \in \mathbb{P}(1,1,2)$ given by $(x:y:t) \mapsto (x:y:t)$. We applied the sieve for $d = 74$. We obtained a contradiction using the primes $3,5,7,17,19,29,31,47,59,61,71,73,79,107$.

\section{Hecke Operators and Hilbert Newforms}

\subsection{Bounding the Exponent}
Once we have obtained irreducibility of the mod-$p$ Galois representations of our Frey curve, the next step is to apply the level-lowering theorem (Theorem \ref{levellower}). By our previous work, we have a list of possible levels $\mathcal{N}_p$ for our Hilbert newform, $\mathfrak{f}$, which are displayed in the appendix. We consider each possibility separately, and aim to discard all isomorphisms between the representations of our Frey curve and the newforms at this level. If we can do this at all the possible levels then we will obtain our desired contradiction.

The standard idea, as used in \citep[p.~12]{realquad}, is as follows: compute the newforms at the level $\mathcal{N}_p$ and combine local information mod $\mathfrak{q}$ for many primes to obtain a contradiction, one newform at a time. For $\mathfrak{q}$ a prime of $K$ not dividing $\mathcal{N}_p$, recall from Section 3 the notation \[ \mathcal{A}_\mathfrak{q} := \{ a \in \mathbb{Z} : \abs{a} \leq 2 \sqrt{n_\mathfrak{q}}, \quad n_\mathfrak{q}+1 - a \equiv 0 \pmod{4}  \}. \] If $\mathfrak{q}$ is a prime of good reduction for $E$, then as discussed in Section 3, $a_{\mathfrak{q}}(E) \in \mathcal{A}_\mathfrak{q}$. 

The following lemma gives the standard method of bounding the prime $p$. We use the same notation as in Section 2.

\begin{lemma}[{\citep[p.~12]{realquad}}]\label{Cf1} Suppose $\overline{\rho}_{E,p} \sim \overline{\rho}_{\mathfrak{f},\varpi}$. Let $\mathcal{T}$ be a set of prime ideals $\mathfrak{q}$ which do not divide $\mathcal{N}_p$. For each $\mathfrak{q} \in \mathcal{T}$ define the principal ideal \[B_{\mathfrak{f},\mathfrak{q}}:= \big( n_{\mathfrak{q}}(n_{\mathfrak{q}}+1-a_{\mathfrak{q}}\mathfrak{f})(n_{\mathfrak{q}}+1+a_{\mathfrak{q}}\mathfrak{f}) \prod_{a \in \mathcal{A}_\mathfrak{q}} (a-a_{\mathfrak{q}}\mathfrak{f}) \big) \cdot  \mathcal{O}_{\Q_\mathfrak{f}}.\] Set $B_\mathfrak{f}:= \sum_{\mathfrak{q} \in T} B_{\mathfrak{f},\mathfrak{q}}$, and denote by $C_\mathfrak{f}$ the norm of this ideal. Then $p \mid C_\mathfrak{f}$.
\end{lemma}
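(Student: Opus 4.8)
The plan is to compare the traces of Frobenius at each prime $\mathfrak{q} \in \mathcal{T}$ coming from the two Galois representations $\overline{\rho}_{E,p}$ and $\overline{\rho}_{\mathfrak{f},\varpi}$, which agree by hypothesis, and to package the resulting congruences into a divisibility statement for $p$. First I would fix $\mathfrak{q} \in \mathcal{T}$ and split into two cases according to the reduction type of $E$ at $\mathfrak{q}$. By Lemma \ref{reduc}, since $\mathfrak{q} \nmid \mathcal{N}_p$ (so in particular $\mathfrak{q}$ is not one of the additive primes), $E$ is semistable at $\mathfrak{q}$, hence has either good or multiplicative reduction there.

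In the good reduction case, comparing $\mathrm{tr}\,\overline{\rho}_{E,p}(\mathrm{Frob}_\mathfrak{q})$ with $\mathrm{tr}\,\overline{\rho}_{\mathfrak{f},\varpi}(\mathrm{Frob}_\mathfrak{q})$ gives $a_\mathfrak{q}(E) \equiv a_\mathfrak{q}\mathfrak{f} \pmod{\varpi}$; since $a_\mathfrak{q}(E) \in \mathcal{A}_\mathfrak{q}$, the prime $\varpi$ divides $(a - a_\mathfrak{q}\mathfrak{f})\mathcal{O}_{\Q_\mathfrak{f}}$ for some $a \in \mathcal{A}_\mathfrak{q}$, hence divides the product $\prod_{a \in \mathcal{A}_\mathfrak{q}}(a - a_\mathfrak{q}\mathfrak{f})$. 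In the multiplicative reduction case, the comparison at $\mathrm{Frob}_\mathfrak{q}$ (using that $\overline{\rho}_{E,p}$ restricted to the decomposition group is, up to unramified twist, an extension of $1$ by $\chi_p$, so the trace is $\pm(n_\mathfrak{q}+1)$) yields $a_\mathfrak{q}\mathfrak{f} \equiv \pm(n_\mathfrak{q}+1) \pmod{\varpi}$, so $\varpi$ divides $(n_\mathfrak{q}+1-a_\mathfrak{q}\mathfrak{f})(n_\mathfrak{q}+1+a_\mathfrak{q}\mathfrak{f})\mathcal{O}_{\Q_\mathfrak{f}}$. (The extra factor $n_\mathfrak{q}$ is thrown in to absorb the possibility that $\mathfrak{q} \mid p$, where the comparison at $\mathfrak{q}$ is not available; one uses instead that $p$ is handled by $n_\mathfrak{q} = \mathrm{Norm}(\mathfrak{q})$ being a power of $p$ in that case.) In all cases $\varpi \mid B_{\mathfrak{f},\mathfrak{q}}$, and since $p$ is the rational prime below $\varpi$, this shows $p \mid \mathrm{Norm}(B_{\mathfrak{f},\mathfrak{q}})$.

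Having done this for every $\mathfrak{q} \in \mathcal{T}$, I would note that $\varpi$ divides every $B_{\mathfrak{f},\mathfrak{q}}$, hence divides their sum of ideals $B_\mathfrak{f} = \sum_{\mathfrak{q} \in \mathcal{T}} B_{\mathfrak{f},\mathfrak{q}}$; taking norms gives $p \mid \mathrm{Norm}(B_\mathfrak{f}) = C_\mathfrak{f}$, as claimed. The main obstacle — really the only subtle point — is handling the primes $\mathfrak{q}$ dividing $p$ correctly, since there the local comparison of Frobenius traces breaks down; this is precisely why the factor $n_\mathfrak{q}$ appears in $B_{\mathfrak{f},\mathfrak{q}}$, guaranteeing $p \mid B_{\mathfrak{f},\mathfrak{q}}$ automatically in that case. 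Everything else is a routine assembly of the standard level-lowering comparison, and indeed this is essentially verbatim \citep[Lemma]{realquad}, so I would simply cite that argument and indicate the minor changes in notation.
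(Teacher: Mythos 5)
Your proof is correct and is exactly the standard comparison-of-Frobenius-traces argument (good reduction gives $a_\mathfrak{q}(E)\equiv a_\mathfrak{q}\mathfrak{f}$, multiplicative gives $a_\mathfrak{q}\mathfrak{f}\equiv\pm(n_\mathfrak{q}+1)$, and the factor $n_\mathfrak{q}$ absorbs $\mathfrak{q}\mid p$), which is precisely the argument of the cited reference; the paper itself offers no independent proof and simply defers to that citation.
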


If $C_\mathfrak{f}$ is non-zero we obtain a bound on $p$. If all the prime factors of $C_\mathfrak{f}$ are less than $17$, then this discards the isomorphism for all $p$ we are concerned with. We discuss the case $C_\mathfrak{f}=0$ at the end of this section.

For the levels $\mathcal{N}_p$ appearing in \citep{realquad} (i.e. when $2 \leq d \leq 23$), this method works as we can compute the newforms, but for larger levels this is not possible with the current Magma implementation, as discussed in \citep{realquad}. The aim of this section is to provide a work-around for this by working directly with Hecke operators. This is similar to what was done in \citep{superelliptic} where the levels obtained were too large to compute the newforms. By working directly with Hecke operators we will be able to reconstruct the eigenvalues of the newforms for the primes $\mathfrak{q} \in \mathcal{T}$. By doing this, we often lose out on knowing what the Hecke eigenfields, $\Q_\mathfrak{f}$, are, and so computing the norm of a sum of ideals in $\mathcal{O}_{\Q_\mathfrak{f}}$ as in Lemma \ref{Cf1} is impossible. The following lemma addresses this issue.

\begin{lemma}\label{cf2} Suppose $\overline{\rho}_{E,p} \sim \overline{\rho}_{\mathfrak{f},\varpi}$. Let $\mathcal{T}$ be a set of prime ideals $\mathfrak{q}$ which do not divide $\mathcal{N}_p$. For each $\mathfrak{q}$, define the field $L_{\mathfrak{q}}:= \Q(a_\mathfrak{q}\mathfrak{f})$, and define the element \[ b_{\mathfrak{f},\mathfrak{q}}:=  n_{\mathfrak{q}}(n_{\mathfrak{q}}+1-a_{\mathfrak{q}}\mathfrak{f})(n_{\mathfrak{q}}+1+a_{\mathfrak{q}}\mathfrak{f}) \prod_{a \in \mathcal{A}} (a-a_{\mathfrak{q}}\mathfrak{f}) \in L_{\mathfrak{q}}. \] Let $c_{\mathfrak{f},\mathfrak{q}}:=\mathrm{Norm}_{L_{\mathfrak{q}} / \Q} (b_{\mathfrak{f},\mathfrak{q}})$. Write $c_\mathfrak{f}:=\gcd \{ c_{\mathfrak{f},\mathfrak{q}} : \mathfrak{q} \in T \} $. Then $p \mid c_\mathfrak{f}$.
\end{lemma}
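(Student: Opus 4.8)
The plan is to reduce Lemma \ref{cf2} to Lemma \ref{Cf1} by a Galois-theoretic descent argument. The key point is that for a fixed newform $\mathfrak{f}$, the isomorphism $\overline{\rho}_{E,p} \sim \overline{\rho}_{\mathfrak{f},\varpi}$ supplied by the hypothesis already gives, via Lemma \ref{Cf1}, that $p \mid C_\mathfrak{f} = \mathrm{Norm}(B_\mathfrak{f})$ where $B_\mathfrak{f} = \sum_{\mathfrak{q} \in \mathcal{T}} B_{\mathfrak{f},\mathfrak{q}}$ is an ideal of $\mathcal{O}_{\Q_\mathfrak{f}}$. So the real content is to relate $C_\mathfrak{f}$ to the integer $c_\mathfrak{f} = \gcd\{c_{\mathfrak{f},\mathfrak{q}}\}$ built only from the subfields $L_\mathfrak{q} = \Q(a_\mathfrak{q}\mathfrak{f}) \subseteq \Q_\mathfrak{f}$ and the norms from those (potentially much smaller, and individually computable) subfields. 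Concretely, I would show: if $p \mid C_\mathfrak{f}$, then $p$ divides $c_{\mathfrak{f},\mathfrak{q}}$ for \emph{every} $\mathfrak{q} \in \mathcal{T}$, hence $p \mid \gcd = c_\mathfrak{f}$.

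First I would fix $\mathfrak{q} \in \mathcal{T}$ and compare $c_{\mathfrak{f},\mathfrak{q}} = \mathrm{Norm}_{L_\mathfrak{q}/\Q}(b_{\mathfrak{f},\mathfrak{q}})$ with $\mathrm{Norm}_{\Q_\mathfrak{f}/\Q}(b_{\mathfrak{f},\mathfrak{q}})$: since $b_{\mathfrak{f},\mathfrak{q}} \in L_\mathfrak{q}$ and $\Q_\mathfrak{f}/L_\mathfrak{q}$ has degree $[\Q_\mathfrak{f}:L_\mathfrak{q}]$, transitivity of the norm gives $\mathrm{Norm}_{\Q_\mathfrak{f}/\Q}(b_{\mathfrak{f},\mathfrak{q}}) = c_{\mathfrak{f},\mathfrak{q}}^{[\Q_\mathfrak{f}:L_\mathfrak{q}]}$, so these two integers have exactly the same set of prime divisors. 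Thus it suffices to work with the principal ideal $B_{\mathfrak{f},\mathfrak{q}} = (b_{\mathfrak{f},\mathfrak{q}}) \subseteq \mathcal{O}_{\Q_\mathfrak{f}}$ and its norm. Now I use the following standard fact: for a nonzero ideal $I = \sum_j (x_j)$ of a number ring, a rational prime $p$ divides $\mathrm{Norm}(I)$ iff some prime $\varpi \mid p$ of $\mathcal{O}_{\Q_\mathfrak{f}}$ contains $I$, i.e. iff there is $\varpi \mid p$ with $x_j \equiv 0 \pmod \varpi$ for all $j$. Applying this with $I = B_\mathfrak{f} = \sum_{\mathfrak{q}} (b_{\mathfrak{f},\mathfrak{q}})$: if $p \mid C_\mathfrak{f} = \mathrm{Norm}(B_\mathfrak{f})$ then there is a prime $\varpi \mid p$ of $\mathcal{O}_{\Q_\mathfrak{f}}$ with $b_{\mathfrak{f},\mathfrak{q}} \equiv 0 \pmod{\varpi}$ for \emph{every} $\mathfrak{q} \in \mathcal{T}$ simultaneously. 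For each individual $\mathfrak{q}$, the existence of a prime above $p$ (namely $\varpi$, or rather its restriction to $L_\mathfrak{q}$) containing $b_{\mathfrak{f},\mathfrak{q}}$ forces $p \mid \mathrm{Norm}_{\Q_\mathfrak{f}/\Q}(b_{\mathfrak{f},\mathfrak{q}})$, hence $p \mid c_{\mathfrak{f},\mathfrak{q}}$ by the previous paragraph. Since this holds for all $\mathfrak{q} \in \mathcal{T}$, we get $p \mid c_\mathfrak{f}$.

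I should also connect this back to why the hypothesis $\overline{\rho}_{E,p} \sim \overline{\rho}_{\mathfrak{f},\varpi}$ actually gives $p \mid C_\mathfrak{f}$ in the first place: this is precisely Lemma \ref{Cf1}, which works because for a prime $\mathfrak{q}$ of good reduction for $E$ the congruence of traces and determinants of Frobenius at $\mathfrak{q}$ forces $\varpi \mid \big(n_\mathfrak{q}(n_\mathfrak{q}+1-a_\mathfrak{q}\mathfrak{f})(n_\mathfrak{q}+1+a_\mathfrak{q}\mathfrak{f})\big)$, while for a prime of (multiplicative or good) reduction with $a_\mathfrak{q}(E) = a \in \mathcal{A}_\mathfrak{q}$ the analogous comparison gives $\varpi \mid (a - a_\mathfrak{q}\mathfrak{f})$; the product $b_{\mathfrak{f},\mathfrak{q}}$ is engineered so that $\varpi \mid b_{\mathfrak{f},\mathfrak{q}}$ in all cases. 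So the $\varpi$ from Lemma \ref{Cf1} is the same $\varpi \mid p$ of $\mathcal{O}_{\Q_\mathfrak{f}}$ appearing above, and one can essentially read off $p \mid c_{\mathfrak{f},\mathfrak{q}}$ directly from $\varpi \mid b_{\mathfrak{f},\mathfrak{q}}$ together with $b_{\mathfrak{f},\mathfrak{q}} \in L_\mathfrak{q}$, without even invoking the ideal-sum formalism.

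The only mild subtlety — and what I would be most careful about — is the precise statement ``$p$ divides the norm of a sum of principal ideals iff some prime above $p$ contains all the generators.'' This needs $B_\mathfrak{f} \neq 0$ (if $C_\mathfrak{f} = 0$ the lemma is vacuous as a divisibility statement, matching the paper's separate treatment of the $C_\mathfrak{f} = 0$ case) and the elementary commutative-algebra fact that a maximal ideal contains a sum of ideals iff it contains each summand. Beyond that there is no genuine obstacle: the lemma is really just bookkeeping that trades the norm from the full Hecke eigenfield $\Q_\mathfrak{f}$ (which the Hecke-operator reconstruction of Section 5 does not hand us) for norms from the individual subfields $\Q(a_\mathfrak{q}\mathfrak{f})$ (which it does), at the cost of replacing a single norm computation by a $\gcd$ over $\mathfrak{q} \in \mathcal{T}$.
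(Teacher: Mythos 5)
Your proposal is correct and takes essentially the same route as the paper: both invoke Lemma \ref{Cf1} to get $p \mid C_\mathfrak{f}$, deduce that $p \mid \mathrm{Norm}_{\Q_\mathfrak{f}/\Q}(b_{\mathfrak{f},\mathfrak{q}})$ for every $\mathfrak{q} \in \mathcal{T}$ (the paper via the fact that the norm of an ideal divides the norm of each of its elements, you via a prime $\varpi \mid p$ containing all the generators of $B_\mathfrak{f}$ — an equivalent piece of bookkeeping), and then conclude from the tower formula $\mathrm{Norm}_{\Q_\mathfrak{f}/\Q}(b_{\mathfrak{f},\mathfrak{q}}) = c_{\mathfrak{f},\mathfrak{q}}^{[\Q_\mathfrak{f}:L_\mathfrak{q}]}$ together with the primality of $p$. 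Your handling of the degenerate case $C_\mathfrak{f}=0$ (where $c_\mathfrak{f}=0$ and $p \mid 0$ holds trivially) is also consistent with the paper's separate treatment of that situation.
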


\begin{proof} 
We start by noting that the Hecke eigenfield $\Q_\mathfrak{f}$ of $\mathfrak{f}$ contains $L_{\mathfrak{q}}$ for each $\mathfrak{q} \in \mathcal{T}$, and we view $L_{\mathfrak{q}}$ as a subfield of $\Q_\mathfrak{f}$.
Following the notation of the previous lemma, we note that $b_{\mathfrak{f},\mathfrak{q}} \in B_\mathfrak{f}$ for all $\mathfrak{q} \in \mathcal{T}$. The norm of an ideal is the greatest common divisor of the norm of all of its elements, and so \[ p \quad \mid \quad C_\mathfrak{f} \quad | \quad  \gcd \{ \mathrm{Norm}_{\Q_\mathfrak{f}/\Q}(b_{\mathfrak{f},\mathfrak{q}}) : \mathfrak{q} \in\mathcal{T}\}. \] As $L_\mathfrak{q}$ is a subfield of $\Q_\mathfrak{f}$ we have that \[ \mathrm{Norm}_{\Q_\mathfrak{f}/\Q}(b_{\mathfrak{f},\mathfrak{q}}) = \left( \mathrm{Norm}_{L_{\mathfrak{q}} / \Q} (b_{\mathfrak{f},\mathfrak{q}}) \right) ^ { [\Q_\mathfrak{f}: L_{\mathfrak{q}}]}, \] so it follows that $p \mid c_\mathfrak{f}$.
\end{proof}

The prime factors of $C_\mathfrak{f}$ are contained in the set of prime factors of $c_\mathfrak{f}$, and so this version may give worse bounds, but we found that in practice, by considering enough primes $\mathfrak{q}$, the two sets of prime factors coincide.

\begin{remark} We in fact work directly with the minimal polynomial of $a_\mathfrak{q}(\mathfrak{f})$ over $\Q$, which we denote $\mu$, to define $c_{\mathfrak{f},\mathfrak{q}}$. We have \[c_{\mathfrak{f},\mathfrak{q}} = n_{\mathfrak{q}} \cdot  \mu(n_{\mathfrak{q}}+1) \cdot \mu(-n_{\mathfrak{q}}-1) \cdot \prod_{a \in \mathcal{A}} \mu(a).  \]
\end{remark}

\subsection{Reconstructing Hilbert Newforms}

Let $\mathfrak{q}$ be a prime of $K$. Write $T_{\mathfrak{q}}$ for the Hecke operator on the space of newforms at level $\mathcal{N}_p$, and write $\chi_\mathfrak{q}$ for its characteristic polynomial. We view $T_\mathfrak{q}$ as a matrix. We then have a factorisation into irreducible polynomials  \[ \chi_\mathfrak{q}(X) = \prod_{i=1}^r e_{\mathfrak{q},i}(X)^{m_i}. \] The roots of each irreducible factor $e_i$ are the eigenvalues of a Galois conjugacy class of Hilbert newforms. Associated to each $e_i$, we have the corresponding irreducible subspace \[ V_{\mathfrak{q},i} := \ker(e_{\mathfrak{q},i}(T_\mathfrak{q})),\] with a basis consisting of members of Galois conjugacy classes of newforms whose eigenvalues at $\mathfrak{q}$ satisfy $e_{\mathfrak{q},i}$. To each $e_{\mathfrak{q},i}$, we also associate a value $c_{\mathfrak{q},i} := c_{\mathfrak{f},\mathfrak{q}}$ for any newform $\mathfrak{f}$ with eigenvalue at $\mathfrak{q}$ a root of $e_{\mathfrak{q},i}$.

Since the Hecke operators $T_\mathfrak{q}$ commute, if $T_{\mathfrak{q}_1}$ and $T_{\mathfrak{q}_2}$ are Hecke operators, and $V_{\mathfrak{q}_1,i_1}$ is some irreducible subspace with respect to $T_{\mathfrak{q}_1}$, then it is preserved by $T_{\mathfrak{q}_2}$ and we can compute the matrix of $T_{\mathfrak{q}_2}$ restricted to $V_{\mathfrak{q}_1,i_1}$. We can then compute the characteristic polynomial of this matrix and decompose $V_{\mathfrak{q}_1,i_1}$ into irreducible subspaces under $T_{\mathfrak{q}_2}$, which we denote by $V_{\mathfrak{q}_1,\mathfrak{q}_2,i_1,i_2}$. Such a  subspace will have a basis of newforms whose eigenvalues at $\mathfrak{q}_1$ and $\mathfrak{q}_2$ are roots of $e_{\mathfrak{q}_1,i_1}$ and $e_{\mathfrak{q}_2,i_2}$ respectively. Associated to the subspace  $V_{\mathfrak{q}_1,\mathfrak{q}_2,i_1,i_2}$ is the integer $c_{\mathfrak{q}_1,\mathfrak{q}_2,i_1,i_2} := \gcd(c_{\mathfrak{f},\mathfrak{q}_1},c_{\mathfrak{f},\mathfrak{q}_2}) $, where $\mathfrak{f}$ is any newform in the space $V_{\mathfrak{q}_1,\mathfrak{q}_2,i_1,i_2}$.

We continue this process. If a value $c_{\mathfrak{q}_1 \cdots \mathfrak{q}_m, i_1, \cdots, i_m}$ has all its prime factors $\leq 13$ then we can discard the associated subspace, as we know that $\overline{\rho}_{E,p} \not\sim \overline{\rho}_{\mathfrak{f},\varpi}$ for any newform in this subspace. We aim to discard all possible subspaces, hence obtaining a contradiction. 

We carried out this process for values $d$ for which the maximum dimension of the space of newforms is $< 9000$ (see the table in the appendix) and this proves Theorem \ref{Mainthm}. The maximum dimension we considered was $8960$ in the case $d=66$. For dimensions larger than this, we found computing the Hecke operators to be computationally impractical.

By considering enough primes $\mathfrak{q}$, we found that in the cases where we \emph{can} compute the full newform decomposition, we were able to completely reconstruct the data using the method described above. We also verified that our results agree with those in \citep{realquad} when $d<26$. We could usually eliminate all subspaces at each level. For the values $d=33,34,41,55,57,89$, we obtained a value $c_\mathfrak{f} = 0$. We consider these cases in Section 5.3. Also, for the pairs $(d,p) = (34,23), (55,23), (86,31),$ and $(97,17)$, we obtained values $c_\mathfrak{f} $ divisible by $p$, which is why these values of $p$ appear in the statement of Theorem \ref{thm2} (but not in Table 1), as we were unable to discard these isomorphisms.

\subsection{Remaining Cases}

In some cases we can discard an isomorphism, or discard it for certain primes, even if $c_\mathfrak{f} = 0$ or has a prime factor $\geq 17$. We first consider the following image of inertia argument.

\begin{lemma}[Image of Inertia {\citep[p.~13]{realquad}}] 
Suppose $\mathfrak{f}$ is a Hilbert newform with $\Q_\mathfrak{f} = \Q$, and write $E'$ for the elliptic curve associated to $\mathfrak{f}$. Suppose that one of $E$ and $E'$ has potentially multiplicative reduction at a prime $\mathfrak{q}$ and that the other has potentially good reduction at $\mathfrak{q}$. Then $\overline{\rho}_{E,p} \not\sim \overline{\rho}_{\mathfrak{f},\varpi}$.
\end{lemma}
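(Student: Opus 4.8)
The plan is to compare the images of inertia at $\mathfrak{q}$ on the two sides, which is the standard argument behind this lemma (cf. \citep[p.~13]{realquad}). Suppose, for contradiction, that $\overline{\rho}_{E,p} \sim \overline{\rho}_{\mathfrak{f},\varpi}$. Since $\Q_\mathfrak{f} = \Q$, the newform $\mathfrak{f}$ has an associated elliptic curve $E'/K$ and $\overline{\rho}_{\mathfrak{f},\varpi} \cong \overline{\rho}_{E',p}$, so $\overline{\rho}_{E,p} \cong \overline{\rho}_{E',p}$. Restricting to the inertia subgroup $I_\mathfrak{q}$ at $\mathfrak{q}$ gives an isomorphism $\overline{\rho}_{E,p}|_{I_\mathfrak{q}} \cong \overline{\rho}_{E',p}|_{I_\mathfrak{q}}$, and since $p \geq 17$ we have $\mathfrak{q} \nmid p$, so we are in the tame setting. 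Without loss of generality assume $E$ has potentially multiplicative and $E'$ has potentially good reduction at $\mathfrak{q}$ (the other case is handled identically, with the roles of $E$ and $E'$ interchanged).

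On the potentially good side, there is a finite extension of $K_\mathfrak{q}$ of degree dividing $24$ over which $E'$ acquires good reduction; over that extension $\overline{\rho}_{E',p}$ is unramified, and the wild inertia (a pro-$\mathfrak{q}$ group, $\mathfrak{q}\nmid p$) maps trivially, so $\overline{\rho}_{E',p}(I_\mathfrak{q})$ is finite of order dividing $24$ (the semistability defect of $E'$ at $\mathfrak{q}$). In particular $p \nmid \#\overline{\rho}_{E',p}(I_\mathfrak{q})$, as $p \geq 5$.

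On the potentially multiplicative side, the theory of the Tate curve provides a quadratic character $\psi$ of $G_{K_\mathfrak{q}}$, trivial exactly when $E$ has multiplicative reduction at $\mathfrak{q}$, with $\overline{\rho}_{E,p}|_{I_\mathfrak{q}} \cong \psi|_{I_\mathfrak{q}} \otimes \left( \begin{smallmatrix} 1 & * \\ 0 & 1 \end{smallmatrix} \right)$, where the upper-right entry is the homomorphism recording the action of $I_\mathfrak{q}$ on a $p$-th root of the Tate parameter $q_E$, and $v_\mathfrak{q}(q_E) = -v_\mathfrak{q}(j_E) > 0$. This entry is non-trivial on $I_\mathfrak{q}$ as soon as $q_E$ is not a $p$-th power in the unramified closure of $K_\mathfrak{q}(\zeta_p)$, which holds whenever $p \nmid v_\mathfrak{q}(j_E)$; in that case $\overline{\rho}_{E,p}(I_\mathfrak{q})$ contains an element of exact order $p$. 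Comparing with the previous paragraph forces $p \mid 24$, a contradiction, which will finish the proof.

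The main obstacle is therefore checking the hypothesis $p \nmid v_\mathfrak{q}(j_E)$ that keeps the unipotent part alive modulo $p$. When it is $E'$ that plays the potentially multiplicative role this is immediate, since $E'$ is an explicitly known curve and one simply computes $v_\mathfrak{q}(j_{E'})$. When it is the Frey curve $E$ that is potentially multiplicative at $\mathfrak{q}$, one uses $j_E = c_4^3/\Delta$ with $\Delta = 16\,a^{2p}b^{2p}c^{2p}$ together with the reduction analysis of Lemma \ref{reduc}: at such a prime, $v_\mathfrak{q}(j_E)$ is, up to the correction passing to a $\mathfrak{q}$-minimal model, an integral linear combination of $v_\mathfrak{q}(2)$, $v_\mathfrak{q}(a)$, $v_\mathfrak{q}(b)$, $v_\mathfrak{q}(c)$ whose non-divisibility by $p$ is verified directly (this is the bookkeeping of \citep[p.~13]{realquad}). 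In the applications of Section 5.3 the prime $\mathfrak{q}$ is chosen so that this condition holds, so the lemma applies and discards the newform $\mathfrak{f}$.
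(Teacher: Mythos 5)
Your argument is correct and is precisely the image-of-inertia argument that the paper delegates to \citep[p.~13]{realquad}: on the potentially good side $\overline{\rho}(I_\mathfrak{q})$ has order dividing $24$ and hence prime to $p$, while on the potentially multiplicative side the Tate-curve description yields an element of order $p$ provided $p\nmid v_\mathfrak{q}(j)$ — a hypothesis you rightly flag as implicit and which indeed holds for the Frey curve at the primes above $2$ where the lemma is applied, since $v_\mathfrak{q}(j_E)=8v_\mathfrak{q}(2)-2p\,v_\mathfrak{q}(abc)$ there. One minor imprecision: for residue characteristic $2$ or $3$ wild inertia need not act trivially on $E'[p]$, but the image of $I_\mathfrak{q}$ is still a quotient of the inertia group of the degree-dividing-$24$ extension over which $E'$ attains good reduction, so your bound and the coprimality to $p$ are unaffected.
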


We applied this argument when $d=34$ and $d=55$, with $\mathcal{N}_p= \mathfrak{p}^8$ in each case, using the (unique) prime above $2$ which is of potentially multiplicative reduction for $E$ (Lemma \ref{reduc}), but of potentially good reduction for each elliptic curve corresponding to a rational newform with $c$-value $0$. This completes the proof of Theorem \ref{thm2}.

Note that from a rational newform $\mathfrak{f}$ (or equivalently an irreducible subspace of dimension 1) we can obtain the corresponding elliptic curve $E'$ as follows. Using the \texttt{EllipticCurveSearch} function in \texttt{Magma} we obtain a (potentially incomplete) list of elliptic curves with conductor $\mathcal{N}_p$. We see if we can find a curve $E'$ such that $a_\mathfrak{q}(E') = a_\mathfrak{q}\mathfrak{f}$, say for a few primes $\mathfrak{q}$. If the values $a_\mathfrak{q}(E')$ do not equal $a_\mathfrak{q} \mathfrak{f}'$ for any other newform $\mathfrak{f}'$, then by modularity $E'$ must correspond to $\mathfrak{f}$. Even when we cannot compute the full newform decomposition, we can still verify this by considering the eigenvalues associated to each subspace (obtained using the method described in Section 5.2). 

We can also often deal with fixed values of $p$, and hence obtain a bound on $p$, using a method of Kraus. The following lemma is stated in \citep[p.~2]{sqrt5} for $K = \Q(\sqrt{5})$, but is easily generalised to $\Q(\sqrt{d})$. It is based on knowing certain primes of multiplicative reduction for $E$ (see Lemma \ref{krauslem}).

\begin{lemma}[{\citep[p.~2]{sqrt5}}]
Let $p \geq 17$ be a prime and suppose there exists a natural number $n$ satisfying the following conditions:
\begin{itemize}
\item we have $n < p-2$ and $n \equiv 2 \pmod{4}$;
\item we have $q:=np+1$ is a prime that splits in $\mathcal{O}_K$;
\item we have $q \nmid \mathrm{Res}(X^n-1,(X+1)^n-1)$.
\end{itemize} Then $\overline{\rho}_{E,p} \not\sim \overline{\rho}_{\mathfrak{f},\varpi}$ for any rational newform $\mathfrak{f}$.
\end{lemma}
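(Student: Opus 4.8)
The plan is to follow Kraus's original argument over $\Q(\sqrt{5})$ (from \citep[p.~2]{sqrt5}) and check that every step goes through verbatim over $\Q(\sqrt{d})$. Suppose for contradiction that $\overline{\rho}_{E,p} \sim \overline{\rho}_{\mathfrak{f},\varpi}$ for a rational newform $\mathfrak{f}$, and write $E'$ for the elliptic curve over $K$ associated to $\mathfrak{f}$, so that $\overline{\rho}_{E,p} \sim \overline{\rho}_{E',p}$. Since $q = np+1$ splits in $\mathcal{O}_K$, pick a prime $\mathfrak{q}$ of $K$ above $q$; then $n_\mathfrak{q} = q$ and the residue field is $\mathbb{F}_q$. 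The first key input is Lemma \ref{krauslem}: the hypotheses $q = np+1$ prime, $q$ split, and $q \nmid \mathrm{Res}(X^n-1,(X+1)^n-1)$ are precisely what is needed there (the condition $n \equiv 2 \pmod 4$ and $n < p-2$ are extra constraints used later), so $\mathfrak{q}$ is a prime of multiplicative reduction for $E$. In particular $E$ has good or multiplicative reduction at $\mathfrak{q}$ and, comparing traces of Frobenius mod $p$, we get $a_\mathfrak{q}(E') \equiv \pm(q+1) \pmod p$ if $E$ has multiplicative reduction and $E'$ has good reduction at $\mathfrak{q}$, or $a_\mathfrak{q}(E') \equiv a_\mathfrak{q}(E) \pmod p$ if both are multiplicative.

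Next I would pin down $a_\mathfrak{q}(E) \pmod p$. Because $\mathfrak{q}$ is a prime of multiplicative reduction for the Frey curve and $E$ has full rational $2$-torsion, one computes (as in the proof of Lemma \ref{krauslem}, tracking the image of $a,b,c$ in $\mathbb{F}_q^*$) that $a_\mathfrak{q}(E) \equiv \pm(q+1) \pmod{p}$ as well — the point being that at a multiplicative prime the mod-$p$ representation is determined by the cyclotomic character up to the unramified quadratic twist. So in all cases $a_\mathfrak{q}(E') \equiv \pm(q+1) \equiv \pm 1 \pmod p$ (using $q \equiv 1 \pmod p$; more precisely $q+1 \equiv 2 \pmod p$, so $a_\mathfrak{q}(E') \equiv \pm 2 \pmod p$). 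On the other hand, since $q$ splits, $a_\mathfrak{q}(E') \in \Z$ and the Hasse bound gives $|a_\mathfrak{q}(E')| \leq 2\sqrt{q}$, so $a_\mathfrak{q}(E')$ is one of finitely many integers, and we need $p \mid a_\mathfrak{q}(E')^2 - 4$ (or $p \mid q\cdot(\text{something})$ in the both-multiplicative case, but $p \nmid q$). The role of the arithmetic conditions $n < p-2$ and $n \equiv 2 \pmod 4$ is exactly to control $\sqrt{q} = \sqrt{np+1}$ and the congruence $q \equiv 1 \pmod 4$ so that the only way $p$ can divide $a_\mathfrak{q}(E')^2-4$ with $|a_\mathfrak{q}(E')| \le 2\sqrt q < p$ is to force a contradiction — one checks that the admissible values of $a_\mathfrak{q}(E')$ are excluded, typically leaving only $a_\mathfrak{q}(E') = \pm 2$ which would make $E'$ have a rational point of order... and this is ruled out by reduction/torsion considerations, or the bound $n<p-2$ makes even $a_\mathfrak{q}(E')=\pm 2$ impossible.

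Concretely, the cleanest route is: the congruence $a_\mathfrak{q}(E') \equiv \pm(q+1) \pmod p$ combined with $|a_\mathfrak{q}(E')| \le 2\sqrt{q} = 2\sqrt{np+1}$ and $n < p - 2$ forces $a_\mathfrak{q}(E') = \pm(q+1)$ on the nose (since $q+1 - 2\sqrt q = (\sqrt q - 1)^2 > 0$ and one checks $2\sqrt q < q+1 - p$ fails only in a controlled range, so the nearest integer to $\pm(q+1)$ in the Hasse interval congruent to it mod $p$ can only be... ), and $a_\mathfrak{q}(E') = \pm(q+1)$ violates Hasse unless $q$ is small, contradiction; alternatively $a_\mathfrak{q}(E') = \pm(q-1)$, which by the same Hasse argument is also excluded, or forces $E'$ to have $\mathbb{F}_q$-rational $q$-torsion structure incompatible with $n \equiv 2 \pmod 4$. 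I would follow \citep[p.~2]{sqrt5} for the precise bookkeeping here; the only thing to verify is that $K = \Q(\sqrt 5)$ played no special role — it did not, since all that was used is that $q$ splits (so $n_\mathfrak{q} = q$ and $a_\mathfrak{q}(E') \in \Z$), that $E$ has full $2$-torsion and multiplicative reduction at $\mathfrak{q}$, and that $2 \nmid \mathcal{N}_p$ issues do not interfere, all of which hold uniformly for $\Q(\sqrt d)$.

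The main obstacle I anticipate is not conceptual but a careful case analysis: one must handle separately the subcase where $E'$ has good reduction at $\mathfrak{q}$ versus multiplicative reduction at $\mathfrak{q}$, and in each determine exactly which residues mod $p$ the integer $a_\mathfrak{q}(E')$ can take and show the Hasse-interval constraint $|a_\mathfrak{q}(E')| \le 2\sqrt{np+1}$ together with $n \equiv 2 \pmod 4$ and $n < p-2$ leaves no admissible value. The interplay of these two inequalities with the congruence mod $p$ is the delicate point, and it is exactly here that all three hypotheses on $n$ get used; everything else is a routine transcription of Kraus's $\Q(\sqrt 5)$ argument.
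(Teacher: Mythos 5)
There is a genuine gap at the decisive step, and it sits exactly where you defer to ``careful bookkeeping''. Your ``cleanest route'' asserts that the congruence $a_\mathfrak{q}(\mathfrak{f}) \equiv \pm(q+1) \pmod{p}$ together with the Hasse bound forces $a_\mathfrak{q}(\mathfrak{f}) = \pm(q+1)$ on the nose; this is false, since $q+1 = np+2 \equiv 2 \pmod{p}$, so the values $a_\mathfrak{q}(\mathfrak{f}) = \pm 2$ satisfy the congruence and lie comfortably inside the Hasse interval for every $q$. What $n < p-2$ actually buys is $q < (p-1)^2$, hence $\abs{a_\mathfrak{q}(\mathfrak{f})} \leq 2\sqrt{q} < 2(p-1)$, which only narrows the possibilities to $a_\mathfrak{q}(\mathfrak{f}) \in \{\pm 2, \pm(p-2), \pm(p+2)\}$ --- it eliminates none of them. (Note also that since $q = np+1 > 7$ and $\mathcal{N}_p$ is supported at the primes above $2$ and at $\mathfrak{m}$, we have $\mathfrak{q} \nmid \mathcal{N}_p$; so $a_\mathfrak{q}(\mathfrak{f})$ is an honest integer eigenvalue subject to Hasse, and your case split on the reduction type at $\mathfrak{q}$ of the curve attached to $\mathfrak{f}$ is not where the difficulty lies.)

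The missing idea is the correct use of $n \equiv 2 \pmod{4}$, which you misread as giving $q \equiv 1 \pmod{4}$: since $p$ is odd and $n \equiv 2 \pmod 4$, in fact $q = np+1 \equiv 3 \pmod{4}$, i.e.\ $4 \mid q+1$. In Kraus's argument (which is what the paper invokes, and which is applied here precisely to the rational newforms with $c_\mathfrak{f}=0$, i.e.\ those whose associated elliptic curve $E'$ has full two-torsion over $K$ --- that is what makes every $a_\mathfrak{q}(\mathfrak{f})$ land in $\mathcal{A}_\mathfrak{q}$ and defeats Lemma \ref{cf2}), one reduces modulo $\mathfrak{q}$: reduction is injective on prime-to-$q$ torsion, so $(\Z/2\Z)^2 \hookrightarrow E'(\mathbb{F}_q)$ and $4 \mid \# E'(\mathbb{F}_q) = q+1-a_\mathfrak{q}(\mathfrak{f})$. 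Combined with $4 \mid q+1$ this forces $4 \mid a_\mathfrak{q}(\mathfrak{f})$, which is incompatible with each of $\pm 2$, $\pm(p-2)$, $\pm(p+2)$ because $p$ is odd. That is the contradiction, and it is exactly the step your write-up leaves as ``reduction/torsion considerations''; without some such input on $E'$ the values $a_\mathfrak{q}(\mathfrak{f}) = \pm 2$ simply cannot be excluded by local data at $\mathfrak{q}$ alone. The parts of your outline that do match the intended proof are the appeal to Lemma \ref{krauslem} for multiplicative reduction at the primes above $q$ and the congruence $a_\mathfrak{q}(\mathfrak{f}) \equiv \pm(q+1) \pmod{p}$; those indeed transfer verbatim from $\Q(\sqrt{5})$ to $\Q(\sqrt{d})$.
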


We apply this lemma in the cases $d = 33,34,41,57,89$, as well as for $d=17$, to show that $\overline{\rho}_{E,p} \not\sim \overline{\rho}_{\mathfrak{f},\varpi}$ for all but finitely many $p \leq 10^7$ when $c_\mathfrak{f}=0$. We then remove these leftover primes by choosing $n$ appropriately, as in \citep[pp.~10-11]{sqrt5}. We were able to do this for each leftover prime other than $p=19$ in the case $d=57$, which we consider in Section 6. 

This strategy does not help eliminate the irrational newforms whose $c$-values are divisible by  a prime $p$ in the cases appearing in Theorem \ref{thm2}, as the prime $p$ is appearing as a factor of $(n_{\mathfrak{q}}+1-a_{\mathfrak{q}}\mathfrak{f})(n_{\mathfrak{q}}+1+a_{\mathfrak{q}}\mathfrak{f})$ for each $\mathfrak{q}$, and so using primes of multiplicative reduction will not help us rule it out. When working over $\Q$, the standard strategy at this point is to apply an argument using a Sturm bound. Although Sturm bounds do exist for Hilbert newforms over quadratic fields (see \citep{sturmquad}), they are too large to be of use computationally in these cases (the bound would be much larger than the dimensions of the spaces of newforms, which would already be too large to compute with). We refer to \citep{sturm} for similar discussions around Sturm bounds, and for other techniques which may be used for eliminating newforms. It may be possible to adapt these methods to this setting.

Finally, in the case $d=89$, we apply the result of \citep[p.~13]{realquad} (in the same way it was applied in the case $d=17$ in \citep[p.~13]{realquad}) to conclude that we have no solutions if $p \equiv \pm 2 \pmod{5}$.

These results prove Theorem \ref{thm3}, apart from the case  $d=57, p = 19$ which we deal with in the next section.

\section{Regular Primes for Quadratic Fields}

In this section we see how we can sometimes avoid using the modular method altogether to show that we have no solutions to the Fermat equation over real quadratic fields for certain primes. We also complete the proof of Theorem \ref{thm3} by showing that the Fermat equation over $\Q(\sqrt{d})$ has no non-trivial solutions for $p=19$ when $d=57$. We note in passing that the methods of this section are unsuccessful for most values of $p$ and $d$ appearing in Table 1.

A prime $p$ is said to be \emph{regular} if it does not divide the class number of the cyclotomic extension $\Q(\zeta_p)$, and \emph{irregular} otherwise. Extending this notion, for $d'$ a squarefree integer, a prime $p$ is said to be \emph{$d'$-regular} if it does not divide the class number of $\Q(\sqrt{d'},\zeta_p)$, and \emph{$d'$-irregular} otherwise. If $p$ is an irregular prime, then $p$ is also $d'$-irregular for all $d'$.

\begin{theorem} Let $p \geq 5$ be a $d$-regular prime, with $d>0$. If $p \nmid d$ then the Fermat equation with exponent $p$ has no non-trivial solutions in $\Q(\sqrt{d})$. If $d = p \cdot m$, then the Fermat equation with exponent $p$ has no non-trivial solutions in $\Q(\sqrt{d})$ if $-m$ is a square mod $p$.
\end{theorem}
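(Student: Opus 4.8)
The plan is to run Kummer's classical proof of Fermat's Last Theorem for regular prime exponents, now over the number field $L := K(\zeta_p) = \Q(\sqrt d,\zeta_p)$, noting that a nontrivial solution over $\mathcal{O}_K$ is in particular one over $\mathcal{O}_L$. First I would reduce, exactly as over $\Q$, to ruling out $a^p+b^p+c^p=0$ with $a,b,c\in\mathcal{O}_L$, $abc\ne 0$, and, after the standard reduction using finiteness of the class group, with the ideals $(a)$, $(b)$, $(c)$ pairwise coprime. Two preliminaries set up the descent. The only quadratic subfield of $\Q(\zeta_p)$ is $\Q(\sqrt{p^\ast})$ with $p^\ast=(-1)^{(p-1)/2}p$; since $d$ is squarefree, $\Q(\sqrt d)\subseteq\Q(\zeta_p)$ only when $d=p$ and $p\equiv 1\pmod 4$ (in which case $L=\Q(\zeta_p)$ and the statement is literally Kummer's theorem for the regular prime $p$), and otherwise $[L:\Q]=2(p-1)$. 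And by the definition of $d$-regularity, $p\nmid h_L$, the class number of $L$.

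The core is the factorisation $-c^p=\prod_{i=0}^{p-1}(a+\zeta_p^i b)$ in $\mathcal{O}_L$, followed by the usual split into the first and second cases of Fermat. In the first case, when no prime $\mathfrak{P}\mid p$ of $L$ divides $abc$, the factors generate pairwise coprime ideals, so each $(a+\zeta_p^i b)$ is the $p$-th power of an ideal, which is principal because $p\nmid h_L$; thus $a+\zeta_p^i b=\varepsilon_i\mu_i^p$ with $\varepsilon_i\in\mathcal{O}_L^\times$, and reducing modulo a suitable power of $\mathfrak{P}$, using $\zeta_p\equiv 1\pmod{\mathfrak{P}}$, one extracts (following Kummer's original argument) a congruence forcing $\varepsilon_i/\overline{\varepsilon_i}$ to be a root of unity and then, via Kummer's lemma on units, a contradiction. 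In the second case, when some $\mathfrak{P}\mid p$ divides (say) $c$, one writes $a+\zeta_p^i b=\varepsilon_i\lambda^{m_i}\mu_i^p$ with $\lambda=1-\zeta_p$ and runs the classical infinite descent on $v_{\mathfrak{P}}(c)$, again using $p\nmid h_L$ and Kummer's lemma to produce a strictly smaller solution.

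The role of the arithmetic hypotheses on $d$ (and on $m$ when $d=pm$) is to make the local picture of $L$ at $p$ agree with the one over $\Q(\zeta_p)$ on which the descent rests. When $p\nmid d$ the extension $L/\Q(\zeta_p)$ is unramified at $\mathfrak{p}=(1-\zeta_p)$, so $\lambda$ stays a uniformiser at each $\mathfrak{P}\mid p$ and $(p)$ is, locally, the $(p-1)$-st power of the primes above it, exactly as in the classical proof. When $d=pm$ one has $L=\Q(\zeta_p)(\sqrt{(-1)^{(p-1)/2}m})$, because $\sqrt{p^\ast}\in\Q(\zeta_p)$, and the condition that $-m$ be a square modulo $p$ is precisely the condition that the residue extension at $\mathfrak{P}$ is trivial, so that the congruences modulo powers of $\mathfrak{P}$ used in both cases take place over $\mathbb{F}_p$ as classically — and, in this ramified case, that the $\mathfrak{P}$-adic bookkeeping of elements of $\mathcal{O}_K$, whose $\mathfrak{P}$-valuations are a priori only multiples of $(p-1)/2$, still closes the descent. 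The genuinely hard part, as always, is Kummer's lemma over $L$: that a unit of $\mathcal{O}_L$ congruent to a rational integer modulo a sufficiently high power of $\mathfrak{P}$ is a $p$-th power. Over $\Q(\zeta_p)$ this is where regularity enters through Kummer's Bernoulli-number criterion; over $L$ I would instead argue that if such a unit $u$ were not a $p$-th power then $L(u^{1/p})/L$ would be an everywhere-unramified cyclic degree-$p$ extension — unramified away from $p$ by construction, and unramified at $\mathfrak{P}$ exactly because of the congruence, whose sharpness is what the hypotheses on $d$ and $m$ supply — contradicting $p\nmid h_L$. Pinning down the precise congruence exponent needed for this last unramifiedness, and handling the reduction to coprime $a,b,c$ when the class group is nontrivial, are the two points I expect to demand the most care.
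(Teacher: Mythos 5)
Your overall strategy --- Kummer's descent carried out in $L=\Q(\sqrt{d},\zeta_p)$, with $d$-regularity supplying $p\nmid h_L$ and a class-field-theoretic form of Kummer's lemma --- is indeed the mathematics underlying this theorem. The paper itself gives no argument beyond citation: it defers entirely to Hao--Parry \citep{reg} and Kraus \citep{krausreg}, and those references carry out essentially the programme you describe. So you are not taking a different route; you are reconstructing the content of the cited works, and most of your outline (the reduction to coprime ideals, the principality of the $\mathfrak{a}_i$ from $p\nmid h_L$, the unramified-$p$-extension proof of Kummer's lemma) is sound in spirit.

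There is, however, a concrete gap in your treatment of the case $p\nmid d$. You correctly isolate the residue field at $\mathfrak{P}\mid p$ as the decisive local datum in the ramified case $d=pm$ (where the hypothesis that $-m$ is a square mod $p$ forces it to be $\mathbb{F}_p$), but you then assert that when $p\nmid d$ everything runs ``exactly as in the classical proof'' because $L/\Q(\zeta_p)$ is unramified at $\mathfrak{p}=(1-\zeta_p)$. Unramifiedness is not the issue. If $d$ is a non-residue mod $p$, then $p$ is inert in $\Q(\sqrt{d})$, $\mathfrak{p}$ is inert in $L/\Q(\zeta_p)$, and the residue field at $\mathfrak{P}$ is $\mathbb{F}_{p^2}$. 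The congruence powering both halves of Kummer's argument --- that $\mu^p$ is congruent to a \emph{rational integer} modulo $p$, equivalently that the $p$-th power map is the identity on the residue field --- then fails: for $\mu\equiv x+y\sqrt{d}\pmod{\mathfrak{P}}$ one gets $\mu^p\equiv x-y\sqrt{d}$ since $d^{(p-1)/2}\equiv -1\pmod p$, so the first-case relation $a+\zeta_p b-\zeta_p^{2r}(a+\zeta_p^{-1}b)\equiv 0\pmod{p}$ no longer follows, and the second-case descent breaks at the same point. This is not a technicality to be absorbed into ``care with the bookkeeping'': it is precisely why the paper must combine Hao--Parry with Kraus's separate article on inert primes, which supplies a genuinely different argument for that situation. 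As written, your proposal establishes the theorem when $d$ is a square mod $p$ (and, with the care you already flag, when $d=pm$ with $-m$ a square mod $p$); the inert subcase of ``$p\nmid d$'' still needs a new idea.
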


\begin{proof}
Let $p \geq 5$ be a $d$-regular prime with $d>0$. Suppose $p \nmid d$. If $d$ is a square mod $p$, the result holds by \citep[p.~129]{reg}. If $d$ is not a square mod $p$, then combining the results of \citep[p.~126,129]{reg} and \citep[p.~2]{krausreg} shows there are no non-trivial solutions. In the case that $p \mid d$, we again apply the result of \citep[p.~129]{reg} to conclude.
\end{proof}

To complete the proof of Theorem \ref{thm3} it therefore suffices to show that $19$ is $57$-regular (as $-3$ is a square mod $19$). In general, directly computing the class numbers of cyclotomic extensions of quadratic fields is not possible, but using the work of Hao and Parry on generalised Bernoulli numbers \citep{bernoulli}, we can avoid doing this. We note that $\Q(\sqrt{57},\zeta_{19}) = \Q(\sqrt{-3},\zeta_{19})$, and so it is equivalent to show that $19$ is $-3$-regular. The tables in \citep{bernoulli} show that this is indeed the case. This completes the proof of Theorem \ref{thm3}. 

For completeness, we state the following result, which gives a simple criterion, when $p \nmid d$ and $d>0$, to check if a prime is $d$-regular.

\begin{proposition}[{\citep[p.~276]{bernoulli}}] Let $p$ be an odd regular prime, let $d>0$, and suppose $p \nmid d$. Write $\Delta$ for the discriminant of $\Q(\sqrt{d})$. Then $p$ is $d$-regular if and only if for all odd $n$ with $1 \leq n \leq p-2$,  \[ \sum_{j=1}^p S_n(j)A_{j \Delta} \not\equiv 0 \pmod{p}. \] Here, \[ S_n(j) = \sum_{u=0}^{j-1} u^n \quad \text{and} \quad  A_{j\Delta} = \sum_{\substack{t=1 \\ t \equiv j\Delta \pmod{p}}}^\Delta \left(\frac{\Delta}{t} \right), \] where $\left(\frac{\Delta}{t} \right)$ denotes the Kronecker symbol.
\end{proposition}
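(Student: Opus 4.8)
\emph{Proof proposal.} Set $F = \Q(\sqrt{d},\zeta_p)$. Since $p$ is odd and $p \nmid d$ we have $p \nmid \Delta$, so $\Gal(F/\Q) \cong (\Z/p)^\times \times \{\pm 1\}$ has order prime to $p$, and the group of Dirichlet characters cut out by $F$ is generated by the Teichm\"uller character $\omega$ modulo $p$ (of order $p-1$, conductor $p$) together with the quadratic character $\chi_d$ of conductor $\Delta$ attached to $\Q(\sqrt d)$. Thus these characters are the $\omega^j$ and the $\omega^j\chi_d$ for $0 \le j \le p-2$; because $d>0$ the character $\chi_d$ is even, so $\omega^j\chi_d$ is odd precisely when $j$ is odd. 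The plan is to first reduce the divisibility $p \mid h_F$ to the divisibility of certain generalized Bernoulli numbers, and then to identify those numbers modulo $p$ with the explicit double sums in the statement.

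\textbf{Step 1 (reduction to Bernoulli numbers).} Write $h_F = h_F^+ h_F^-$ up to a power of $2$, and decompose the $p$-part $A$ of $\mathrm{Cl}(F)$ into eigenspaces $A_\psi$ under the characters $\psi$ of $\Gal(F/\Q)$. By the Herbrand--Ribet theorem together with the Mazur--Wiles main conjecture for abelian fields over $\Q$, for an odd character $\psi$ one has $A_\psi \ne 0$ if and only if $p$ divides the associated generalized Bernoulli number $B_{1,\psi^{-1}} \in \Z_p$; for an even character, Leopoldt's reflection theorem (applicable since $\zeta_p \in F$) shows that $A_\psi \ne 0$ forces the odd eigenspace $A_{\omega\psi^{-1}}$ to be non-zero. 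Now if $\psi$ is a power of $\omega$ alone, the relevant Bernoulli number is an ordinary one via the Kummer congruence relating $B_{1,\omega^j}$ to $B_{j+1}/(j+1)$, and $p$ being regular means exactly that $p$ divides none of $B_2,\dots,B_{p-3}$; moreover $p \mid h^-(\Q(\zeta_p))$ would already force $p \mid h_F$ through the inclusion of class groups, which is injective on $p$-parts as $[F:\Q(\zeta_p)]=2$. Combining these observations, \emph{under the regularity hypothesis} $p \mid h_F$ if and only if $p \mid B_{1,\omega^n\chi_d}$ for some odd $n$ with $1 \le n \le p-2$. (Replacing $\psi$ by $\psi^{-1}$ only permutes the odd exponents $n$ among themselves, so the precise twist is immaterial for the final ``for all $n$'' statement.)

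\textbf{Step 2 (explicit congruence).} The second and more computational step is to prove, for each odd $n$ with $1 \le n \le p-2$, a congruence of the shape
\[ \sum_{j=1}^{p} S_n(j)\,A_{j\Delta} \;\equiv\; u_n \cdot B_{1,\omega^n\chi_d} \pmod{p}, \]
with $u_n \in \Z_p^\times$. Starting from $B_{1,\omega^n\chi_d} = \frac{1}{p\Delta}\sum_{1 \le a \le p\Delta,\; p\nmid a} \omega^n(a)\chi_d(a)\,a$, I would use $\omega(a) \equiv a \pmod{p\Z_p}$ together with a Kummer-type manipulation to absorb the factor $\frac1p$, rewriting the sum in terms of a power of $a$ and $\chi_d$, and then split the range by the Chinese remainder theorem into a residue modulo $p$ and a residue modulo $\Delta$. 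The sum over residues modulo $p$ collapses, via Faulhaber's formula, to the power sums $S_n(j) = \sum_{u=0}^{j-1}u^n$, while the sum of Kronecker symbols over residues modulo $\Delta$ lying in a fixed class modulo $p$ is precisely the quantity $A_{j\Delta}$ from the statement; reindexing yields the displayed congruence. Putting Steps 1 and 2 together gives the proposition: $p \nmid h(\Q(\sqrt d,\zeta_p))$ if and only if $\sum_{j=1}^p S_n(j)A_{j\Delta} \not\equiv 0 \pmod p$ for every odd $n$ with $1 \le n \le p-2$.

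\textbf{Main obstacle.} The heart of the work is Step 2: one must carefully track the $p$-adic valuation introduced by $\frac{1}{p\Delta}$, justify to the needed precision the replacement of $\omega^n(a)$ by a power of $a$, and carry out the Chinese-remainder splitting so that the mod-$\Delta$ contributions assemble cleanly into the sums $A_{j\Delta}$ without the cross-terms of the CRT reconstruction of $a$ spoiling the congruence. A secondary point demanding care is checking that the reflection-plus-Herbrand--Ribet argument in Step 1 genuinely delivers an ``if and only if'': the Ribet direction for the $\chi_d$-twisted eigenspaces is exactly where the Mazur--Wiles main conjecture, rather than only the classical statement over $\Q(\zeta_p)$, is needed.
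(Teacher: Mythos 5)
First, note that the paper does not prove this proposition at all: it is imported verbatim from Hao--Parry \citep[p.~276]{bernoulli}, so the only honest comparison is with the argument in that source. Your overall architecture is the right one and is essentially theirs: reduce ``$p \mid h(\Q(\sqrt d,\zeta_p))$'' to ``$p \mid B_{1,\omega^n\chi_d}$ for some odd $n$'' (regularity kills the $\omega^j$-components, parity of $\chi_d$ restricts to odd $n$), and then identify $\sum_j S_n(j)A_{j\Delta}$ with a unit multiple of $B_{1,\omega^n\chi_d}$ modulo $p$. Where you diverge is Step 1: you invoke Herbrand--Ribet plus the Mazur--Wiles main conjecture to get an eigenspace-by-eigenspace equivalence, whereas the classical (and Hao--Parry's) route needs none of this. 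The relative class number formula $h_F^- = Qw\prod_{\chi\ \mathrm{odd}}\bigl(-\tfrac12 B_{1,\chi}\bigr)$ already gives \emph{both} directions of ``$p\mid h_F^-$ iff $p\mid B_{1,\chi}$ for some odd $\chi$'' unconditionally, and Kummer/Leopoldt reflection reduces $h_F^+$ to $h_F^-$; the only delicate point on that route is the cancellation of the factor $p\mid w=2p$ against the non-$p$-integrality of $B_{1,\omega^{-1}}$, which your eigenspace formulation sidesteps but the elementary one must confront. Your heavier route is not wrong, but it proves far more than is needed and obscures that the statement is an 1980s-technology result.

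The genuine gap is Step 2, which you yourself flag as ``the heart of the work'' and then leave entirely as a plan. The congruence you assert is true, with $u_n=\Delta^{-n}$, but the clean way to get it is not the CRT decomposition of the defining sum for $B_{1,\omega^n\chi_d}$ over residues modulo $p\Delta$ (where, as you worry, the cross-terms and the $1/p$ are genuinely painful). Instead: swap the order of summation to write $\sum_{j=1}^p S_n(j)A_{j\Delta}=\sum_{t=1}^{\Delta}\chi_d(t)S_n(j_t)$ with $j_t\equiv t\Delta^{-1}\pmod p$ (well defined mod $p$ since $S_n$ is $p$-periodic mod $p$ for $(p-1)\nmid n$); use Faulhaber, $S_n(j)=\bigl(B_{n+1}(j)-B_{n+1}\bigr)/(n+1)$, and $\sum_t\chi_d(t)=0$ to recognise the result as $\Delta^{-n}B_{n+1,\chi_d}/(n+1)$ modulo $p$; then apply the Kummer congruence $B_{n+1,\chi}/(n+1)\equiv B_{1,\chi\omega^{n}}\pmod p$. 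Without some such execution (including the von Staudt--Clausen check at $n=p-2$), your proposal is a correct outline rather than a proof.
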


We note that when $p$ is a regular prime satisfying $p > \Delta$, a similar criterion to test whether $p$ is $d$-regular is given in \citep[p.~279]{bernoulli} which is faster to check computationally. We found, using a short \texttt{Magma} script, that $p$ is $d$-irregular for each pair $(d,p)$ appearing in Theorem \ref{thm2}, which is why we cannot eliminate these primes. We also checked that the $2$- and $5$-irregular primes obtained using our code agree with those appearing in the tables in  \citep{bernoulli}. 

\section*{Appendix}

In the table below, $n$ and $n_{\mathrm{new}}$ denote the dimensions of the spaces of Hilbert cuspforms and Hilbert newforms respectively. The column denoted RCG records the exponents of the ray class groups appearing in Lemma \ref{rcg}. The remaining column headings follow the notation of the paper.

\tiny
\begingroup
\renewcommand*{\arraystretch}{1.4}
\begin{longtable}{|c|c|c|c|c|c|c|c|}
\hline
$d$ & $S$ & $r$ & $\mathfrak{m}$ & $\mathcal{N}_p$ & $n$ &$n_{\mathrm{new}}$ &   RCG  \\
\hline 
\hline
$26$ & $\mathfrak{p}$ & $2$ & $\langle 5, \sqrt{d}+6 \rangle$ & $\mathfrak{p}$ & $18$ & $2$ &  $8$  \\ \cline{5-8} & & & & $\mathfrak{m}^2  \mathfrak{p}$ & $388$ & $78$ &  \\
\hline
$29$ & $ \mathfrak{p} = 2\mathcal{O}$ & $1$ & $1$ & $\mathfrak{p}$ & $3$ & $1$ &  $2$  \\  \cline{5-8} & & & & $\mathfrak{p}^4$ & $81$ & $45$ & \\
\hline
$30$ & $ \mathfrak{p}$ & $2$ & $\langle 3, \sqrt{d}\rangle$ & $\mathfrak{p}$ & $28$& $0$ &  $2,4$  \\ \cline{5-8} & & &  & $\mathfrak{m}^2  \mathfrak{p}$ & $220$ & $28$ &  \\ \cline{5-8} & & & & $\mathfrak{p}^8 $ & $2172$ & $544$ &    \\ \cline{5-8} & & &  & $\mathfrak{m}^2  \mathfrak{p}^8$ & $26108$ & $2720$ &    \\
\hline
$31$ & $\mathfrak{p}$ & $1$ & $1$ & $\mathfrak{p}$ & $16$ & $2$ &  $2$ \\ \cline{5-8} & & & & $\mathfrak{p}^4$ & $93$ & $20$ &  \\
\hline
$33$ &  $\mathfrak{p}_1,\mathfrak{p}_2 $ & $1$ & $1$ & $\mathfrak{p}_1\mathfrak{p}_2$ & $6$ & $2$ &  $2$  \\ \cline{5-8} & & &  & $\mathfrak{p}_1  \mathfrak{p}_2^4$ & $34$ & $2$ &  \\
\hline
$34$ & $ \mathfrak{p}$ & $2$ & $\langle 3,\sqrt{d}+1 \rangle$ & $\mathfrak{p}$ & $36$& $4$ &  $4,8$ \\ \cline{5-8} & & &  & $\mathfrak{m}^2  \mathfrak{p}$ & $292$ & $40$ &  \\ \cline{5-8} & & & & $\mathfrak{p}^8 $ & $2940$ & $736$ &    \\ \cline{5-8} & & &  & $\mathfrak{m}^2  \mathfrak{p}^8$ & $35324$ & $3680$ &    \\
\hline
$35$ & $ \mathfrak{p}$ & $2$ & $\langle 5,\sqrt{d} \rangle$ &  $\mathfrak{p}$ & $28$& $0$ &  $2,8$ \\ \cline{5-8} & & &  & $\mathfrak{m}^2  \mathfrak{p}$ & $592$ & $120$ &   \\ \cline{5-8} & & & & $\mathfrak{p}^4 $ & $160$ & $38$ &     \\ \cline{5-8} & & &  & $\mathfrak{m}^2  \mathfrak{p}^4$ & $4580$ & $722$ &   \\  
\hline
$37$  & $ \mathfrak{p} = 2\mathcal{O}$ & $1$ & $1$ & $\mathfrak{p}$ & $4$ & $2$ & $6$ \\  \cline{5-8} & & & & $\mathfrak{p}^4$ & $135$ & $75$ &   \\
\hline
$38$ & $\mathfrak{p}$ & $1$ & $1$ & $\mathfrak{p}$ & $18$ & $0$ &  $4$   \\ \cline{5-8} & & & & $\mathfrak{p}^8$ & $1310$ & $328$ &  \\
\hline 
$39$ & $\mathfrak{p}$ & $2$ & $\langle 5,\sqrt{d} +3\rangle$ & $\mathfrak{p}$  & $36$ & $4$ &  $2,4,8$ \\ \cline{5-8} & & & & $\mathfrak{p}^4$ & $236$ & $56$ &    \\ \cline{5-8} & & & & $\mathfrak{m}^2\mathfrak{p}$ & $792$ & $156$ &   \\ \cline{5-8} & & & & $\mathfrak{p}^8$ & $3356$ & $832$  &  \\ \cline{5-8} & & & & $\mathfrak{m}^2\mathfrak{p}^4$ & $6284$ & $984$ &  \\ \cline{5-8} & & & & $\mathfrak{m}^2\mathfrak{p}^8$ & $99900$ & $15808$ &    \\
\hline
$41$ & $\mathfrak{p}_1,\mathfrak{p}_2 $ & $1$ & $1$ & $\mathfrak{p}_1\mathfrak{p}_2$ & $6$ & $2$ &  $1$ \\
\hline
$42$  & $ \mathfrak{p}$ & $2$ &  $\langle 3,\sqrt{d}\rangle$ & $\mathfrak{p}$ & $36$& $4$ &  $4$  \\ \cline{5-8} & & &  & $\mathfrak{m}^2  \mathfrak{p}$ & $320$ & $36$ &   \\ \cline{5-8} & & & & $\mathfrak{p}^8 $ & $3484$ & $864$ &  \\ \cline{5-8} & & &  & $\mathfrak{m}^2  \mathfrak{p}^8$ & $41468$ & $4320$ &     \\
\hline
$43$  & $\mathfrak{p}$ & $1$ & $1$ & $\mathfrak{p}$ & $20$ & $0$ &  $2$  \\ \cline{5-8} & & & & $\mathfrak{p}^4$ & $127$ & $33$ &  \\
\hline
$46$ & $\mathfrak{p}$ & $1$ & $1$ & $\mathfrak{p}$ & $26$ & $4$ &  $4$ \\ \cline{5-8} & & & & $\mathfrak{p}^8$ & $2390$ & $592$ &  \\ 
\hline
$47$  & $\mathfrak{p}$ & $1$ & $1$ & $\mathfrak{p}$ & $24$ & $2$ &  $2$  \\ \cline{5-8} & & & & $\mathfrak{p}^4$ & $135$ & $28$ & \\ 
\hline
$51$  & $ \mathfrak{p}$ & $2$ & $\langle 3,\sqrt{d}\rangle$ & $\mathfrak{p}$ & $52$ & $4$ &  $2,4$ \\ \cline{5-8} & & &  & $\mathfrak{m}^2  \mathfrak{p}$ & $468$ & $56$ &  \\ \cline{5-8} & & & & $\mathfrak{p}^4 $ & $320$ & $84$ &    \\ \cline{5-8} & & &  & $\mathfrak{m}^2  \mathfrak{p}^4$ & $3740$ & $396$ &   \\
\hline
$53$  & $ \mathfrak{p} = 2\mathcal{O}$ & $1$ & $1$ & $\mathfrak{p}$ & $6$ & $2$ &  $2$  \\  \cline{5-8} & & & & $\mathfrak{p}^4$ & $189$ & $105$ &   \\ 
\hline
$55$ & $\mathfrak{p}$ & $2$ & $\langle 3,\sqrt{d}+4 \rangle$ & $\mathfrak{p}$  & $68$ & $12$ &  $2,4,8$   \\ \cline{5-8} & & & & $\mathfrak{p}^4$ & $412$ & $96$ &    \\ \cline{5-8} & & & & $\mathfrak{m}^2\mathfrak{p}$ & $584$ & $84$ &   \\ \cline{5-8} & & & & $\mathfrak{p}^8$ & $5916$ & $1472$ &     \\ \cline{5-8} & & & & $\mathfrak{m}^2\mathfrak{p}^4$ & $4460$ & $464$ &     \\ \cline{5-8} & & & & $\mathfrak{m}^2\mathfrak{p}^8$ & $70652$ & $7360$ &     \\ 
\hline
$57$  &  $\mathfrak{p}_1,\mathfrak{p}_2 $ & $1$ & $1$ & $\mathfrak{p}_1\mathfrak{p}_2$ & $12$ & $4$ &  $2$  \\ \cline{5-8} & & &  & $\mathfrak{p}_1  \mathfrak{p}_2^4$ & $82$ & $6$ & \\
\hline
$58$ & $\mathfrak{p}$ & $2$ & $\langle 3,\sqrt{d}+1 \rangle$ & $\mathfrak{p}$ & $50$ & $10$ &  $4$  \\ \cline{5-8} & & & & $\mathfrak{m}^2  \mathfrak{p}$ & $592$ & $90$ &   \\
\hline
$59$  & $\mathfrak{p}$ & $1$ & $1$ & $\mathfrak{p}$ & $32$ & $4$ &  $2$ \\ \cline{5-8} & & & & $\mathfrak{p}^4$ & $177$ & $47$ &  \\ 
\hline
$61$  & $ \mathfrak{p} = 2\mathcal{O}$ & $1$ & $1$ & $\mathfrak{p}$ & $7$ & $3$ &  $2$  \\  \cline{5-8} & & & & $\mathfrak{p}^4$ & $295$ & $165$ &   \\
\hline
$62$ & $\mathfrak{p}$ & $1$ & $1$ & $\mathfrak{p}$ & $32$ & $2$ &  $4$\\ \cline{5-8} & & & & $\mathfrak{p}^8$ & $2710$ & $672$ &  \\ 
\hline
$65$  &  $\mathfrak{p}_1,\mathfrak{p}_2 $ & $2$ & $\langle 7,\sqrt{d}+3\rangle$ & $\mathfrak{p}_1\mathfrak{p}_2$ & $24$ & $8$ &  $4$  \\ \cline{5-8} & & &  & $\mathfrak{m}^2\mathfrak{p}_1  \mathfrak{p}_2$ & $722$ & $54$ &   \\ 
\hline
$66$  & $ \mathfrak{p}$ & $2$ & $\langle 3,\sqrt{d}\rangle$ & $\mathfrak{p}$ & $76$& $8$ &  $2,4$\\ \cline{5-8} & & &  & $\mathfrak{m}^2  \mathfrak{p}$ & $688$ & $88$ &   \\ \cline{5-8} & & & & $\mathfrak{p}^8 $ & $7164$ & $688$ &   \\ \cline{5-8} & & &  & $\mathfrak{m}^2  \mathfrak{p}^8$ & $86012$ & $8960$ &    \\ 
\hline
$67$ & $ \mathfrak{p}$ & $1$ & $1$ & $\mathfrak{p}$ & $36$ & $4$ &  $2$  \\  \cline{5-8} & & & & $\mathfrak{p}^4$ & $247$ & $63$ &   \\  
\hline
$69$  & $ \mathfrak{p} = 2\mathcal{O}$ & $1$ & $1$ & $\mathfrak{p}$ & $10$ & $4$ &  $2$  \\  \cline{5-8} & & & & $\mathfrak{p}^4$ & $330$ & $177$ &   \\
\hline
$70$ & $ \mathfrak{p}$ & $2$ & $\langle 3,\sqrt{d}+1\rangle$ & $\mathfrak{p}$ & $88$& $16$ & $2,4$ \\ \cline{5-8} & & &  & $\mathfrak{m}^2  \mathfrak{p}$ & $840$ & $120$ &  \\ \cline{5-8} & & & & $\mathfrak{p}^8 $ & $8572$ & $2144$ &        \\ \cline{5-8} & & &  & $\mathfrak{m}^2  \mathfrak{p}^8$ & $102908$ & $10720$ &    \\  
\hline
$71$  & $ \mathfrak{p}$ & $1$ & $1$ & $\mathfrak{p}$ & $42$ & $8$ &  $2$ \\  \cline{5-8} & & & & $\mathfrak{p}^4$ & $265$ & $58$ & \\   
\hline
$73$ & $\mathfrak{p}_1,\mathfrak{p}_2 $ & $1$ & $1$ & $\mathfrak{p}_1\mathfrak{p}_2$ & $16$ & $4$ &  $1$ \\ 
\hline
$74$  & $\mathfrak{p}$ & $2$ & $\langle 5,\sqrt{d}+3\rangle$ & $\mathfrak{p}$ & $72$ & $12$ &  $4$ \\ \cline{5-8} & & & & $\mathfrak{m}^2  \mathfrak{p}$ & $1868$ & $384$ &   \\
\hline
$77$  & $ \mathfrak{p} = 2\mathcal{O}$ & $1$ & $1$ & $\mathfrak{p}$ & $10$ & $4$ &  $2$  \\  \cline{5-8} & & & & $\mathfrak{p}^4$ & $330$ & $177$ &   \\ 
\hline
$78$ & $ \mathfrak{p}$ & $2$ & $\langle 7,\sqrt{d}+6\rangle$ & $\mathfrak{p}$ & $88$& $8$ &  $2,4$ \\ \cline{5-8} & & &  & $\mathfrak{m}^2  \mathfrak{p}$ & $3896$ & $960$ &   \\ \cline{5-8} & & & & $\mathfrak{p}^8 $ & $8828$ & $2208$ &  \\ \cline{5-8} & & &  & $\mathfrak{m}^2  \mathfrak{p}^8$ & $494588$ & $90528$ &    \\  
\hline
$79$ & $ \mathfrak{p}$ & $1$ & $1$ & $\mathfrak{p}$ & $156$ & $30$ &  $6$  \\  \cline{5-8} & & & & $\mathfrak{p}^4$ & $1077$ & $252$ &   \\  
\hline
$82$  & $\mathfrak{p}$ & $2$ & $\langle 3,\sqrt{d}+4\rangle$ & $\mathfrak{p}$ & $168$ & $40$ &  $8$  \\ \cline{5-8} & & & & $\mathfrak{m}^2  \mathfrak{p}$ & $1940$ & $284$ &  \\ 
\hline
$83$ & $ \mathfrak{p}$ & $1$ & $1$ & $\mathfrak{p}$ & $44$ & $4$ &  $2$  \\  \cline{5-8} & & & & $\mathfrak{p}^4$ & $265$ & $69$ & \\  
\hline
$85$& $ \mathfrak{p}$ & $2$ & $\langle 3,\sqrt{d}+5\rangle$ & $\mathfrak{p}$ & $22$& $10$ &  $2,4$  \\ \cline{5-8} & & &  & $\mathfrak{m}^2  \mathfrak{p}$ & $178$ & $44$ &   \\ \cline{5-8} & & & & $\mathfrak{p}^4 $ & $966$ & $540$ &  \\ \cline{5-8} & & &  & $\mathfrak{m}^2  \mathfrak{p}^4$ & $11518$ & $2700$ &   \\ 
\hline
$86$ & $\mathfrak{p}$ & $1$ & $1$ & $\mathfrak{p}$ & $50$ & $8$ &  $4$  \\ \cline{5-8} & & & & $\mathfrak{p}^8$ & $4958$ & $1240$ & \\ 
\hline
$87$ & $ \mathfrak{p}$ & $2$ & $\langle 3,\sqrt{d}\rangle$ & $\mathfrak{p}$ & $88$& $16$ &  $2,4$ \\ \cline{5-8} & & &  & $\mathfrak{m}^2  \mathfrak{p}$ & $932$ & $116$ &   \\ \cline{5-8} & & & & $\mathfrak{p}^4 $ & $656$ & $162$ &    \\ \cline{5-8} & & &  & $\mathfrak{m}^2  \mathfrak{p}^4$ & $7484$ & $786$ &  \\ 
\hline
$89$ & $\mathfrak{p}_1,\mathfrak{p}_2 $ & $1$ & $1$ & $\mathfrak{p}_1\mathfrak{p}_2$ & $20$ & $4$ &  $1$  \\
\hline
$91$ & $ \mathfrak{p}$ & $2$ & $\langle 5,\sqrt{d}+1\rangle$ & $\mathfrak{p}$ & $120$& $20$ &  $2,4,8$ \\ \cline{5-8} & & &  & $\mathfrak{m}^2  \mathfrak{p}$ & $3128$ & $660$ &  \\ \cline{5-8} & & & & $\mathfrak{p}^4 $ & $832$ & $206$ &  \\ \cline{5-8} & & &  & $\mathfrak{m}^2  \mathfrak{p}^4$ & $24740$ & $3914$ &  \\
\hline
$93$  & $ \mathfrak{p} = 2\mathcal{O}$ & $1$ & $1$ & $\mathfrak{p}$ & $14$ & $6$ &  $2$  \\  \cline{5-8} & & & & $\mathfrak{p}^4$ & $330$ & $177$ & \\
\hline
$94$ & $\mathfrak{p}$ & $1$ & $1$ & $\mathfrak{p}$ & $68$ & $14$ &  $4$  \\ \cline{5-8} & & & & $\mathfrak{p}^8$ & $6822$ & $1696$ & \\ 
\hline
$95$ & $\mathfrak{p}$ & $2$ & $\langle7,\sqrt{d}+2\rangle$ & $\mathfrak{p}$  & $116$ & $20$ & $2,4,8$  \\ \cline{5-8} & & & & $\mathfrak{p}^4$ & $756$ & $180$ &   \\ \cline{5-8} & & & & $\mathfrak{m}^2\mathfrak{p}$ & $4848$ & $1188$ & \\ \cline{5-8} & & & & $\mathfrak{p}^8$ & $11068$ & $2752$ &     \\ \cline{5-8} & & & & $\mathfrak{m}^2\mathfrak{p}^4$ & $38572$ & $7060$ &     \\ \cline{5-8} & & & & $\mathfrak{m}^2\mathfrak{p}^8$ & $616444$ & $112832$ &   \\ 
\hline
$97$ & $\mathfrak{p}_1,\mathfrak{p}_2 $ & $1$ & $1$ & $\mathfrak{p}_1\mathfrak{p}_2$ & $25$ & $4$ &  $1$  \\ 
\hline

\end{longtable}
\endgroup

\normalsize

\Addresses

\end{document}